\documentclass[letterpaper,11pt]{article}
\usepackage[margin=1in]{geometry}
\usepackage[bookmarks, colorlinks=true, plainpages = false, citecolor = blue,linkcolor=red,urlcolor = blue, filecolor = blue]{hyperref}
\usepackage{url}\urlstyle{rm}
\usepackage{amsmath,amsfonts,amsthm,amssymb,bm, verbatim,dsfont,mathtools}
\usepackage{algorithm,algorithmic,color,graphicx,appendix}
\usepackage{subfigure}
\usepackage{etoolbox}
\usepackage{tikz}
\usepackage{xr,xspace}
\usepackage{todonotes}
\usepackage{paralist}
\usepackage{caption}
%%%%% THEOREM STYLE DEFINITIONS
\theoremstyle{plain}
\newtheorem{theorem}{Theorem}
\newtheorem{lemma}{Lemma}
\newtheorem{proposition}{Proposition}

\theoremstyle{definition}
\newtheorem{definition}{Definition}
\newtheorem{hypothesis}{Hypothesis}

\newtheorem{remark}{Remark}
\newtheorem*{remark*}{Remark}

\renewenvironment{compactitem}
{\begin{itemize}}
{\end{itemize}}

\newcommand{\1}[1]{{\mathbf{1}_{\left\{{#1}\right\}}}}

\newcommand{\Perp}{\indep}
\newcommand{\Hyper}{\text{Hypergeometric}}
%\newcommand \P[1]{\mathbb{P}[#1]}

%% Wu
\usepackage{xspace,prettyref}
\newcommand{\diverge}{\to\infty}

\newcommand{\iiddistr}{{\stackrel{\text{\iid}}{\sim}}}

\newcommand{\reals}{{\mathbb{R}}}

\newcommand{\naturals}{{\mathbb{N}}}

  % symmetric matrices

\newcommand{\eexp}{{\rm e}}

\newcommand{\diff}{{\rm d}}

\newcommand{\Expect}{\mathbb{E}}
\newcommand{\expect}[1]{\mathbb{E}\left[ #1 \right]}
\newcommand{\expects}[2]{\mathbb{E}_{#2}\left[ #1 \right]}

\newcommand{\Prob}{\mathbb{P}}
\newcommand{\prob}[1]{{ \mathbb{P}\left\{ #1 \right\} }}

\newcommand\indep{\protect\mathpalette{\protect\independenT}{\perp}}
\def\independenT#1#2{\mathrel{\rlap{$#1#2$}\mkern2mu{#1#2}}}
\newcommand{\Bern}{{\rm Bern}}
\newcommand{\Binom}{{\rm Binom}}

\newcommand{\eg}{e.g.\xspace}
\newcommand{\ie}{i.e.\xspace}
\newcommand{\iid}{i.i.d.\xspace}
% for prettyref.sty
\newrefformat{eq}{(\ref{#1})}
\newrefformat{chap}{Chapter~\ref{#1}}
\newrefformat{sec}{Section~\ref{#1}}
\newrefformat{algo}{Algorithm~\ref{#1}}
\newrefformat{fig}{Fig.~\ref{#1}}
\newrefformat{tab}{Table~\ref{#1}}
\newrefformat{rmk}{Remark~\ref{#1}}
\newrefformat{clm}{Claim~\ref{#1}}
\newrefformat{def}{Definition~\ref{#1}}
\newrefformat{cor}{Corollary~\ref{#1}}
\newrefformat{lmm}{Lemma~\ref{#1}}
\newrefformat{prop}{Proposition~\ref{#1}}
\newrefformat{app}{Appendix~\ref{#1}}
\newrefformat{hyp}{Hypothesis~\ref{#1}}
\newrefformat{thm}{Theorem~\ref{#1}}
\newcommand{\ntok}[2]{{#1,\ldots,#2}}
\newcommand{\pth}[1]{\left( #1 \right)}

\newcommand{\sth}[1]{\left\{ #1 \right\}}
\newcommand{\bpth}[1]{\Bigg( #1 \Bigg)}
\newcommand{\bqth}[1]{\Bigg[ #1 \Bigg]}

% \newcommand{\opnorm}[1]{\lnorm{#1}{\rm op}}

% inner product

% 12/02/2007
\newcommand{\indc}[1]{{\mathbf{1}_{\left\{{#1}\right\}}}}
\newcommand{\Indc}{\mathbf{1}}

\newcommand{\dTV}{d_{\rm TV}}

\newcommand{\tx}{{\widetilde{x}}}

\newcommand{\tA}{{\widetilde{A}}}

\newcommand{\tG}{{\widetilde{G}}}

\newcommand{\tP}{{\widetilde{P}}}

\newcommand{\tS}{{\widetilde{S}}}

\newcommand{\tV}{{\widetilde{V}}}

\newcommand{\tX}{{\widetilde{X}}}

\newcommand{\calA}{{\mathcal{A}}}

\newcommand{\calE}{{\mathcal{E}}}

\newcommand{\calG}{{\mathcal{G}}}

\newcommand{\calX}{{\mathcal{X}}}
\newcommand{\calY}{{\mathcal{Y}}}

\newcommand{\comp}[1]{{#1^{\rm c}}}

\newcommand{\Th}{{\rm th}}

\newcommand{\PDS}{{\sf PDS}\xspace}
\newcommand{\PC}{{\sf PC}\xspace}
\newcommand{\BPDS}{{\sf BPDS}\xspace}
\newcommand{\BPC}{{\sf BPC}\xspace}
\newcommand{\DKS}{{\sf DKS}\xspace}
\newcommand{\PDSR}{{\sf PDSR}\xspace}

\begin{document}

\title{Computational Lower Bounds for Community Detection on Random Graphs}

\date{\today}

\author{Bruce Hajek \and Yihong Wu \and Jiaming Xu\thanks{The authors are with
the Department of ECE, University of Illinois at Urbana-Champaign, Urbana, IL, \texttt{\{b-hajek,yihongwu,jxu18\}@illinois.edu}.}}

\maketitle

\begin{abstract}
This paper studies the problem of detecting the presence of a small dense community planted in a large Erd\H{o}s-R\'enyi random graph $\calG(N,q)$, where the edge probability within the community exceeds $q$ by a constant factor.
Assuming the hardness of the planted clique detection problem, we show that the  computational  complexity of detecting the community exhibits the following phase transition phenomenon: As the graph size $N$ grows and the graph becomes sparser according to $q=N^{-\alpha}$,
there exists a critical value of $\alpha = \frac{2}{3}$, below which there exists a computationally intensive procedure that can detect far smaller communities than any computationally efficient procedure,
and above which a linear-time procedure is statistically optimal. The results also lead to the average-case hardness results for recovering the dense community and approximating the densest $K$-subgraph.
\end{abstract}

\section{Introduction}
Networks often exhibit community structure with many edges joining the vertices of the same community and relatively few edges joining vertices of different communities. Detecting communities in networks has received a large amount of attention and has found numerous applications in social and biological sciences, etc (see, \eg, the exposition \cite{Fortunato10} and the references therein).
While most previous work focuses on identifying the vertices in the communities, this paper studies the more basic problem of detecting the presence of a small community in a large random graph, proposed recently in \cite{arias2013community}. This problem has practical applications including detecting new events and monitoring clusters, and is also of theoretical interest for understanding the statistical and algorithmic limits of community detection \cite{ChenXu14}.

Inspired by the model in \cite{arias2013community}, we formulate this community detection problem as a planted dense subgraph detection (\PDS) problem. Specifically, let $\mathcal{G}(N,q)$ denote the Erd\H{o}s-R\'enyi random graph with $N$ vertices, where each pair of vertices is connected independently with probability $q$. Let $\mathcal{G}(N,K,p,q)$ denote the planted dense subgraph model with $N$ vertices where: (1) each vertex is included in the random set $S$ independently with probability $\frac{K}{N}$; (2) for any two vertices, they are connected independently with probability $p$ if both of them are in $S$ and with probability $q$ otherwise, where $p > q$. In this case, the vertices in $S$ form a community with higher connectivity than elsewhere. The planted dense subgraph here has a random size with mean $K$, which is similar to the models adopted in \cite{Decelle11,Mossel12,Mossel13,Feldman13,Massoulie13}, instead of a deterministic size $K$ as assumed in \cite{arias2013community,verzelen2013sparse,ChenXu14}.
\begin{definition}\label{def:HypTesting}
The planted dense subgraph detection problem with parameters $(N,K,p,q)$, henceforth denoted by $\PDS(N,K,p,q)$, refers to the problem of distinguishing hypotheses:
 \begin{align*}
H_0: \quad & G \sim \mathcal{G}(N,q)\triangleq  \Prob_0,  \\
H_1: \quad & G \sim \mathcal{G}(N,K,p,q) \triangleq  \Prob_1.
\end{align*}
\end{definition}

The statistical difficulty of the problem depends on the parameters $(N,K,p,q)$.
Intuitively, if  the expected dense subgraph size $K$ decreases, or if the edge probabilities $p$ and $q$ both decrease
by the same factor, or if $p$ decreases for $q$ fixed, the distributions under the null and alternative hypotheses become less distinguishable.
%get closer and thus the detection becomes harder.
Recent results in \cite{arias2013community,verzelen2013sparse} obtained necessary and sufficient conditions for detecting planted dense subgraphs %threshold for the detection is statistically possible
under certain assumptions of the parameters.
%Moreover, the \PDS($N,K,p,q$)  poses
%computationally challenge: A test exhaustively searching over all the possible subgraph of size $K$ would have a time complexity $O(N^K)$,
%which is computationally intractable for large graphs.
However, it remains unclear whether the statistical fundamental limit can always be achieved by efficient procedures. In fact, it has been
shown in \cite{arias2013community,verzelen2013sparse} that many popular low-complexity tests, such as total degree test, maximal degree test, dense subgraph test, as well as tests based on certain convex relaxations, can be highly suboptimal.
This observation prompts us to investigate the computational limits for the \PDS problem, \ie,
what is the sharp condition on $(N,K,p,q)$ under which the problem admits a computationally efficient test with vanishing error probability, and conversely, without which no algorithm can detect the planted dense subgraph reliably in polynomial time.
%identifying the regime of $(N,K,p,q)$ in which the problem admits a computationally efficient test but unsolvable in polynomial time outside this regime.
To this end, we focus on a particular case where the community is denser by a constant factor than the rest of the graph, \ie, $p=cq$ for some constant $c>1$. %Adopting the standard reduction approach in complexity theory, we  show that the \PDS problem in some parameter regime is at least
%as hard as a well-known computationally intractable problem. In this paper, we use the planted clique detection (\PC) problem as our benchmark.
Adopting the standard reduction approach in complexity theory, we show that the \PDS problem in some parameter regime is at least
as hard as the planted clique problem in some parameter regime, which is conjectured to be computationally intractable.
Let $\calG(n,k,\gamma)$ denote the planted clique model in which we add edges to $k$ vertices uniformly chosen from $\calG(n,\gamma)$ to form a clique.
\begin{definition}\label{def:PlantedCliqueDetection}
The \PC detection problem with parameters $(n,k,\gamma)$, denoted by $\PC(n,k,\gamma)$ henceforth, refers to the problem of distinguishing hypotheses:
 \begin{align*}
H^{\rm C}_0: \quad & G \sim \mathcal{G}(n,\gamma),  \\
H^{\rm C}_1: \quad & G \sim \mathcal{G}(n,k,\gamma).
\end{align*}
\end{definition}

The problem of finding the planted clique has been extensively studied for $\gamma=\frac{1}{2}$ and the state-of-the-art polynomial-time algorithms \cite{Alon98,Feige99findingand,McSherry01,Feige10findinghidden,Dekel10,ames2011plantedclique,Deshpande12} only work for $k=\Omega(\sqrt{n})$. There is no known polynomial-time solver for the \PC  problem for $k=o(\sqrt{n})$ and any constant $\gamma>0$. It is conjectured \cite{Jer92,Hazan2011Nash,Juel00cliqueCrypto,alon2007testing,Feldman13} that the \PC problem cannot be solved in polynomial time for $k=o(\sqrt{n})$ with $\gamma =\frac{1}{2}$, which we refer to as the \PC Hypothesis.
\begin{hypothesis}\label{hyp:HypothesisPlantedClique}
Fix some constant $0<\gamma \le \frac{1}{2}$. For any sequence of randomized polynomial-time tests $\{ \psi_{n,k_n} \}$ such that $\limsup_{n \to \infty} \frac{\log k_n }{ \log n} < 1/2$,
\begin{align*}
\liminf_{n \to \infty} \mathbb{P}_{H_0^{\rm C}} \{ \psi_{n,k} (G) =1 \} + \mathbb{P}_{H_1^{\rm C}} \{ \psi_{n,k}(G)=0 \} \ge 1.
\end{align*}
\end{hypothesis}
The \PC Hypothesis with $\gamma=\frac{1}{2}$ is similar to \cite[Hypothesis 1]{MaWu13} and \cite[Hypothesis $\mathbf{B_{PC}}$]{berthet2013lowerSparsePCA}.
Our computational lower bounds require that the \PC Hypothesis holds for any positive constant $\gamma$.
%, which is still believed to be true.
% Indeed, \cite[Theorem 10.3]{ABW10} justify a hardness assumption on constructing the public key by assuming the \PC  Hypothesis holds for $\gamma=2^{-\log^{0.99} n}$.
An even stronger assumption that \PC Hypothesis holds for $\gamma=2^{-\log^{0.99} n}$ has been used in \cite[Theorem 10.3]{ABW10} for public-key cryptography.
 Furthermore, \cite[Corollary 5.8]{Feldman13} shows that under a statistical query model, any statistical algorithm requires at least $n^{\Omega(\frac{\log n}{ \log (1/\gamma) } ) }$ queries for detecting the planted bi-clique in an Erd\H{o}s-R\'enyi random bipartite graph with edge probability $\gamma$.

\subsection{Main Results}
We consider the $\PDS(N,K,p,q)$ problem in the following asymptotic regime:
\begin{equation}
p=cq= \Theta(N^{-\alpha}),\; K =\Theta(N^{\beta}), \quad N \diverge,
	\label{eq:scaling}
\end{equation}
where $c>1$ is a fixed constant, $\alpha \in [0,2]$ governs the sparsity of the graph,\footnote{The case of $\alpha>2$ is not interesting since detection is impossible even if the planted subgraph is the entire graph ($K=N$).} and $\beta \in [0,1]$ captures the size of the dense subgraph.
%under the scaling of $p=cq= \Theta(N^{-\alpha})$, $K =\Theta(N^{\beta})$ and $N \diverge$ for $ \alpha>0$ and $\beta \in (0,1)$ and a fixed constant $c>1$. The sparsity of the graph and the size of the dense subgraph are captured by the parameter $\alpha$ and $\beta$, respectively.
Clearly the detection problem becomes more difficult if either $\alpha$ increases or $\beta$ decreases. Assuming the \PC Hypothesis holds for any positive constant $\gamma$, we show that the parameter space of $(\alpha,\beta)$ is partitioned into three regimes as depicted in \prettyref{fig:phase}:

\begin{compactitem}
\item \textbf{The Simple Regime: $ \beta> \frac{1}{2}+\frac{\alpha}{4} $}. The dense subgraph can be detected in linear time with high probability by thresholding the total number of edges.
\item \textbf{The Hard Regime: $\alpha<\beta <\frac{1}{2}+\frac{\alpha}{4} $}. Reliable detection can be achieved by thresholding the maximum number of edges among all subgraphs of size $K$; however, no polynomial-time solver exists in this regime.
%the densest $K$-subgraphs
%A super polynomial time scan test which searches over all possible subgraphs of size $K$ succeeds in this regime, but no polynomial-time solver exists in this regime.
% if the \PC Hypothesis holds for any constant $\gamma$.
\item \textbf{The Impossible Regime: $ \beta< \min \{\alpha, \frac{1}{2}+\frac{\alpha}{4} \} $}. No test can detect the planted subgraph regardless of the computational complexity.
\end{compactitem}
\begin{figure}[h!]
  \begin{center}
\scalebox{1}{
\begin{tikzpicture}[scale = 2.5, font = \small, thick]
\draw (4, 0) -- (4, 2);
\draw (0, 2) -- (4, 2);
\node [left] at (0, 2) {$1$};
\node [below] at (4, 0) {$2$};
\node [above] at (4.7,0) {$p=cq=\Theta(N^{-\alpha})$};
\node [right] at (0,2.3) {$K=\Theta(N^{\beta})$};
\node [left] at (0, 1) {$1/2$};
%\node [right] at (2,1.5){$3/4$};
\draw[green] (0,1)--(0,2);
\draw[red] (0,0)--(0,1);
\path[fill = black!20] (0, 0) -- (4,0) -- (4, 2) -- (4/3,4/3)-- cycle;
\node at (2,0.5){impossible};
\path[fill= red!60] (0,0) --(2,2)-- (0,2)-- cycle;
\node at (0.3,0.7) {hard};
\node [below] at (0.6,0.6) [rotate=45] {$\beta=\alpha$};
\path[fill=green!60] (0,1)--(4,2)--(0,2)-- cycle;
\node at (1.7,1.7){simple};
\node [left] at (0, 1) {$1/2$};
%\node at (1.8,1.3) {\small spectral barrier};
%\draw[->] (1.8,1.4) -- (1.5,1.75);
%node [above] at (1.2,0.9) {\small computational limit};
%\draw[->] (1.2,1.2) -- (1,1.5);
%\draw[thin, dashed] (4/3, 4/3) -- (2, 2);
%\draw[thin, dashed] (0,1)--(1,1);
\draw[->] (0, 0) node [below left] {$O$}-- (4.5, 0) node [below]{\large $\alpha$};
\draw[->] (0, 0) -- (0, 2.3) node [left]{\large $\beta$};
%\draw[thin,dashed] (4/3,4/3) -- (2,2);
%\draw[thin,dashed] (2,0)--(2,2);
\node [below] at (2,0) {$1$};
\node [below] at (4/3,0) {$2/3$};
%\node [left] at (0,4/3) {$2/3$};
\node [below] at (2,3/2) [rotate=14]{$ \beta= \alpha/4+ 1/2$ };
\end{tikzpicture}}
\end{center}
%\caption{The simple (green), hard (red), impossible (gray) regimes for the \PDS problem.}
\caption{The simple (green), hard (red), impossible (gray) regimes for detecting the planted dense subgraph.}
\label{fig:phase}
\end{figure}
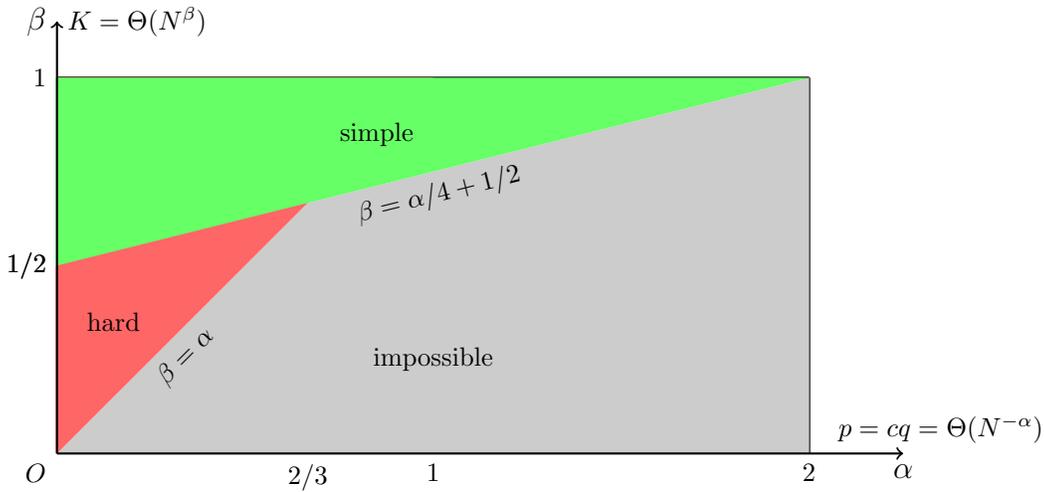

The computational hardness of the \PDS problem exhibits a phase transition at the critical value $\alpha=2/3$: For {\it moderately sparse} graphs with $\alpha<2/3$, there exists a combinatorial algorithm that can detect far smaller communities than any efficient procedures; For \emph{highly sparse} graphs with $\alpha>2/3$, optimal detection is achieved in linear time based on the total number of edges. Equivalently, attaining the statistical detection limit is computationally tractable only in the large-community regime ($\beta > 2/3$).
%if one restricts to efficiently computable tests, small communities
%Putting it in an another equivalent way, the computational hardness exhibits phase transition at $\beta=2/3$: In the {\it small} community regime with $\beta< 2/3$, a computational expensive test succeeds in the much sparser graph than any computationally efficient test; In the \emph{large} community regime with $\beta>2/3$, optimal detection is achieved in linear time based on the total number of edges.
Therefore, surprisingly, the linear-time test based on the total number of edges is always statistically optimal among all computationally efficient procedures in the sense that no polynomial-time algorithm can reliably detect the community when $\beta < \frac{1}{2}+\frac{\alpha}{4}$.
%it attains the best statistical performance.
It should be noted that \prettyref{fig:phase} only captures the leading polynomial term according to the parametrization \prettyref{eq:scaling}; at the boundary $\beta=\alpha/4+1/2$, it is plausible that one needs to go beyond simple edge counting in order to achieve reliable detection. This is analogous to the planted clique problem where the maximal degree test succeeds if the clique size satisfies $k=\Omega(\sqrt{n \log n})$ \cite{Kucera95} and the more sophisticated spectral method succeeds if $k=\Omega(\sqrt{n})$ \cite{Alon98}.

The above hardness result should be contrasted with the recent study of community detection on the stochastic block model, where the community size scales linearly with the network size. When the edge density scales as $\Theta(\frac{1}{N})$ \cite{Mossel12,Mossel13,Massoulie13} (resp. $\Theta(\frac{\log N}{N})$ \cite{Abbe14,Mossel14,HWX14b}), the statistically optimal threshold for partial (resp. exact) recovery can be attained in polynomial time up to the sharp constants.
In comparison, this paper focuses on the regime when the community size grows \emph{sublinearly} as $N^{\beta}$ and the edge density decays more slowly as $N^{-\alpha}$. It turns out that in this case even achieving the optimal exponent is computationally as demanding as solving the planted clique problem.

Our computational lower bound for the \PDS problem also implies the average-case hardness of approximating the planted dense subgraph or the densest $K$-subgraph of the random graph ensemble $\calG(N,K,p,q)$, complementing the worst-case inapproximability result in \cite{Alon11}, which is based on the planted clique hardness as well. In particular, we show that no polynomial-time algorithm can approximate the planted dense subgraph or the densest $K$-subgraph within any constant factor in the regime of $\alpha<\beta <\frac{1}{2}+\frac{\alpha}{4}$, which provides a partial answer to the conjecture made in \cite[Conjecture 2.6]{ChenXu14} and the open problem raised in \cite[Section 4]{Alon11} (see \prettyref{sec:recovery}). Our approach and results can be extended to the bipartite graph case (see \prettyref{sec:bipartite}) and shed light on the computational limits of the \PDS problem with a fixed planted dense subgraph size studied in \cite{arias2013community,verzelen2013sparse} (see \prettyref{sec:fixedsize}).
%\nb{need REWORD: does not exactly extend to fixed size. otherwise why keep distributional.}

\subsection{Connections to the Literature}
This work is inspired by an emerging line of research (see, \eg,
%that take a joint statistical and computational view on statistical inference problems
 \cite{Kolar2011,balakrishnan2011tradeoff,berthet2013lowerSparsePCA,chandrasekaran2013tradeoff,MaWu13,ChenXu14,Jiaming13}) which examines high-dimensional inference problems from both the statistical and computational perspectives. Our computational lower bounds follow from a randomized polynomial-time reduction scheme which approximately reduces the \PC problem to the \PDS problem of appropriately chosen parameters. Below we discuss the connections to previous results and highlight the main technical contributions of this paper.

\paragraph*{\PC Hypothesis} Various hardness results in the theoretical computer science
literature have been established based on the \PC  Hypothesis with $\gamma=\frac{1}{2}$, \eg cryptographic applications \cite{Juel00cliqueCrypto}, approximating Nash equilibrium \cite{Hazan2011Nash}, testing $k$-wise independence \cite{alon2007testing}, etc.
% approximating densest $k$-subgraph \cite{Alon11}, certifying the Restricted Isometry Property for compressed sensing measurement matrices \cite{Pascal12}, etc.
 More recently, the \PC  Hypothesis with $\gamma=\frac{1}{2}$ has been used to investigate the penalty incurred by complexity constraints on certain high-dimensional statistical inference problems, such as
%  to the existence of big gap between statistical and computational limits in
  detecting sparse principal components \cite{berthet2013lowerSparsePCA} and noisy biclustering (submatrix detection) \cite{MaWu13}.  Compared with most previous works, our computational lower bounds rely on the stronger assumption that the \PC  Hypothesis holds for any positive constant $\gamma$.  An even stronger assumption that \PC Hypothesis holds for $\gamma=2^{-\log^{0.99} n}$ has been used in \cite{ABW10} for public-key cryptography. It is an interesting open problem to prove that \PC  Hypothesis for a fixed $\gamma \in (0,\frac{1}{2})$ follows from that for $\gamma=\frac{1}{2}$.
%Planted Clique Hypothesis holds in the class of statistical algorithms for any $\gamma$ so long as $\log \frac{1}{\gamma} =o(\log n)$.  \nb{--this is from \cite[Corollary 5.8]{Feldman13} for the class of ``statistical algorithms'' and ``query complexity'' no?}

\paragraph*{Reduction from the \PC Problem}
Most previous work \cite{Hazan2011Nash,alon2007testing,Alon11,ABW10} in the theoretical computer science
literature uses the reduction from the \PC  problem to generate computationally hard instances of problems and establish \emph{worst-case} hardness results; the underlying distributions of the instances could be arbitrary. Similarly, in the recent works \cite{berthet2013lowerSparsePCA,MaWu13} on the computational limits of certain \emph{minimax} inference problems, the reduction from the \PC  problem is used to generate computationally hard but statistically feasible instances of their problems; the underlying distributions of the instances can also be arbitrary as long as they are valid priors on the parameter spaces.
%the statistical limits remain the same.
%In particular, instead of focusing on the original sparse principal component detection problem, i.e., testing the standard Gaussian distribution versus the standard Gaussian distribution plus a sparse principal component, \cite{berthet2013lowerSparsePCA} considered testing sub-Gaussian distributions versus sub-Gaussian distributions plus a sparse principal component.
%Likewise, instead of focusing on the original submatrix detection problem, i.e., testing the standard Gaussian random matrix versus
% the standard Gaussian random matrix plus a constant-valued submatrix located uniformly at random, \cite{MaWu13} considered testing
% the standard Gaussian random matrix versus the standard Gaussian random matrix plus a submatrix located arbitrarily.
 In contrast, here our goal is to establish the average-case hardness of the \PDS problem based on that of the \PC problem.
 %   showing computational lower bounds for a specific hypothesis testing problem.
   Thus the underlying distributions of the problem instances generated from the reduction must be close to the desired distributions in total variation under both the null and alternative hypotheses. To this end, we start with a small dense graph generated from $\calG(n,\gamma)$ under $H_0$ and $\calG(n,k,\gamma)$ under $H_1$, and arrive at a large sparse graph whose distribution is exactly $\calG(N,q)$ under $H_0$ and approximately equal to $\calG(N,K,p,q)$ under $H_1$.
   Notice that simply sparsifying the \PC problem does not capture the desired tradeoff between the graph sparsity and the cluster size.
   Our reduction scheme differs from those used in \cite{berthet2013lowerSparsePCA,MaWu13} which start with a large dense graph. Similar to ours, the reduction scheme in \cite{Alon11} also enlarges and sparsifies the graph by taking its subset power; but the distributions of the resulting random graphs are rather complicated and not close to the Erd\H{o}s-R\'enyi type.
%   This is exactly why our average case inapproximability result to densest $K$-subgraph problem is considerably weaker than the inapproximability result obtained in \cite{Alon11}, which we elaborate on in the next paragraph.

\paragraph*{Inapproximability of the \DKS Problem}
The densest $K$-subgraph (\DKS) problem refers to finding the subgraph of $K$ vertices with the maximal number of edges.
In view of the NP-hardness of the \DKS problem which follows from the NP-hardness of MAXCLIQUE, it is of interest to consider an $\eta$-factor approximation algorithm, which outputs a subgraph with $K$ vertices containing at least a $\frac{1}{\eta}$-fraction of the number of edges in the densest $K$-subgraph.
%This problem is NP-hard which follows from the NP-hardness of MAXCLIQUE. An $\eta$-factor approximation algorithm for \DKS problem outputs a subgraph with $K$ vertices that contains $\frac{1}{\eta}$ times as many edges as the densest $K$-subgraph.
Proving the NP-hardness of $(1+\epsilon)$-approximation for \DKS for any fixed $\epsilon>0$ is a longstanding open problem. See \cite{Alon11} for a comprehensive discussion. Assuming the \PC Hypothesis holds with $\gamma=\frac{1}{2}$, \cite{Alon11} shows that the \DKS problem is hard to approximate within any constant factor even if the densest $K$-subgraph is a clique of size $K=N^{\beta}$ for any $\beta<1$, where $N$ denotes the total number of vertices. This worst-case inapproximability result is in stark contrast to the average-case behavior in the
planted dense subgraph model $G(N,K,p,q)$ under the scaling \prettyref{eq:scaling}, where it is known \cite{ChenXu14,ames2013robust} that the planted dense subgraph can be exactly recovered in polynomial time if $\beta> \frac{1}{2} + \frac{\alpha}{2}$ (see the simple region in Fig.~\ref{fig:planteddensesubgraph} below),  implying that the densest $K$-subgraph can be approximated within a factor of $1+\epsilon$ in polynomial time for any $\epsilon>0$. On the other hand, our computational lower bound for $\PDS(N,K,p,q)$ shows that any constant-factor approximation of the densest $K$-subgraph has high average-case hardness if $\alpha<\beta < \frac{1}{2}+\frac{\alpha}{4}$ (see \prettyref{sec:recovery}).

%\paragraph*{Variants of \PDS Model}
%Various models for planted dense subgraphs have been studied in the literature.
%Our \PDS model is closely related to the one with a fixed size $K$ studied in \cite{arias2013community,verzelen2013sparse} (see \prettyref{sec:fixedsize}); the only difference is that here the size of the planted dense subgraph is binomially distributed with mean $K$.
%%as $\Binom(N,K/N)$ instead of deterministic $K$.
%Three versions of the \PDS model in increasing order of difficulty have been considered in \cite[Section 3]{BCCFV10}: The \emph{random planted} model is the same as \PDS model in \cite{arias2013community,verzelen2013sparse} except for different parameterizations of edge probabilities $p,q$; For the \emph{dense in random} model, the planted dense $K$-subgraph could be arbitrary instead of the Erd\H{o}s-R\'enyi random graph assumed in the random planted model; For the \emph{dense versus random} model, the graph under the alternative hypothesis could be an arbitrary graph which contains a dense $K$-subgraph.  A bipartite graph variant of the \PDS model is used in \cite[p.\ 10]{ABBG10} for financial applications where the total number of edges is the same under both the null and alternative hypothesis.
%%which is harder to solve \nb{why???} than our model because there the graph is regular which prevents the linear test by thresholding the total number of edges from succeeding.
%A hypergraph variant of the \PDS problem is used in \cite{ABW10} for cryptographic applications.

\paragraph*{Variants of \PDS Model}
Three versions of the  \PDS model were considered in \cite[Section 3]{BCCFV10}.   Under all three the graph under the null hypothesis is the
 Erd\H{o}s-R\'enyi graph.   The versions of the alternative hypothesis, in order of increasing difficulty of detection, are:
(1)  The \emph{random planted} model, such that the graph under the alternative hypothesis is obtained by
generating an  Erd\H{o}s-R\'enyi graph, selecting $K$ nodes arbitrarily, and then resampling the edges among the $K$ nodes with a higher probability
to form a denser Erd\H{o}s-R\'enyi subgraph.   This is somewhat more difficult to detect than the model of \cite{arias2013community,verzelen2013sparse},
for which the choice of which $K$ nodes are in the planted dense subgraph is made before any edges of the graph are independently, randomly generated.
(2) The \emph{dense in random} model,  such that both the nodes and edges of the planted dense $K$-subgraph are arbitrary;
(3) The \emph{dense versus random} model, such that the entire graph under the alternative hypothesis could be an arbitrary graph containing a dense $K$-subgraph.
Our \PDS model is closely related to the first of these three versions, the key difference being that for our model the size of the planted dense subgraph is binomially distributed with mean $K$ (see Section \ref{sec:fixedsize}).   Thus, our hardness result is for the easiest type
of detection problem.     A bipartite graph variant of the \PDS model is used in \cite[p.\ 10]{ABBG10} for financial applications where the total number of edges is the same under both the null and alternative hypothesis.
%which is harder to solve \nb{why???} than our model because there the graph is regular which prevents the linear test by thresholding the total number of edges from succeeding.
A hypergraph variant of the \PDS problem is used in \cite{ABW10} for cryptographic applications.

\subsection{Notations}
For any set $S$, let $|S|$ denote its cardinality. Let $s_1^n=\{s_1, \ldots, s_n \}$.
For any positive integer $N$, let $[N]=\{1, \ldots, N\}$. For $a, b \in \reals$, let $a \wedge b =\min\{a, b\}$ and $a \vee b =\max \{a, b\}$.  We use standard big $O$ notations, e.g., for any sequences $\{a_n\}$ and $\{b_n\}$, $a_n=\Theta(b_n)$ if there is an absolute constant $C>0$ such that $1/C \le a_n/ b_n \le C$.
%In addition, we write $a_n=\breve{\Theta} (b_n)$ if they are equivalent up to sub-polynomial factors, i.e., for any $\delta>0$, $n^{-\delta} \le a_n/b_n \le n^{\delta}$ holds for sufficiently large values of $n$.
Let $\Bern(p)$ denote the Bernoulli distribution with mean $p$ and $\Binom(N,p)$ denote the binomial distribution with $N$ trials and success probability $p$.
For random variables $X, Y$, we write $X \Perp Y$ if $X$ is independent with $Y$.
For probability measures $\mathbb{P}$ and $\mathbb{Q}$, let $d_{\rm TV}(\mathbb{P},\mathbb{Q} ) = \frac{1}{2} \int |\diff \Prob-\diff \mathbb{Q}|$ denote the total variation distance and
%$\chi^2(\mathbb{P}\|\mathbb{Q}) =  \frac{1}{2} \int (\frac{\diff \Prob}{\diff \mathbb{Q}}-1)^2\diff \mathbb{Q}$
$\chi^2(\mathbb{P}\|\mathbb{Q}) =  \int \frac{(\diff \Prob-\diff \mathbb{Q})^2}{\diff \mathbb{Q}}$
 the $\chi^2$-divergence.
The distribution of a random variable $X$ is denoted by $P_X$. We write $X\sim \Prob$ if $P_X=\Prob$.
%Throughout the paper, let $\Prob_0$ (resp.\ $\Prob_1$) denote the distribution of the adjacency matrix $A$ of graph $G$ under $H_0$ (resp. $H_1$) of the \PC problem.
%distributed as $\Prob$ is denoted by $X \sim \Prob$.
All logarithms are natural unless the base is explicitly specified.

\section{Statistical Limits} \label{sec:statisticallimits}
This section determines the statistical limit for the $\PDS(N,K,p,q)$ problem with $p=cq$ for a fixed constant $c>1$. For a given pair $(N,K)$, one can ask the question: What is the smallest density $q$
%, say $q^*$,
such that it is possible to reliably detect the planted dense subgraph?
When the subgraph size $K$ is \emph{deterministic}, this question has been thoroughly investigated by Arias-Castro and Verzelen \cite{arias2013community,verzelen2013sparse} for general $(N,K,p,q)$ and the statistical limit with sharp constants has obtained in certain asymptotic regime.  Their analysis treats the dense regime $\log (1 \vee (Kq)^{-1} )=o(\log \frac{N}{K})$ \cite{arias2013community} and sparse regime $\log \frac{N}{K} =O(\log (1 \vee (Kq)^{-1} ))$ \cite{verzelen2013sparse} separately. Here as we focus on the special case of $p=cq$ and are only interested in characterizations within absolute constants, we provide a simple non-asymptotic analysis which treats the dense and sparse regimes in a unified manner. Our results demonstrate that the $\PDS$ problem in \prettyref{def:HypTesting}
has the same statistical detection limit as the $\PDS$ problem with a deterministic size $K$ studied in \cite{arias2013community,verzelen2013sparse}.
%our non-asymptotic analysis is simple and does not need to treat two regimes separately.
%
%\nb{todo: reword since here the model is random size}

\subsection{Lower Bound}
By the definition of the total variation distance, the optimal testing error probability is determined by the total variation distance between the distributions under the null and the alternative hypotheses:
\begin{align*}
\min_{\phi: \{0,1\}^{N(N-1)/2}\to \{0,1\} } \left( \Prob_0\{\phi(G)=1\} + \Prob_1\{\phi(G)=0\} \right) = 1- d_{\rm TV} (\Prob_0, \Prob_1 ).
\end{align*}
The following result (proved in \prettyref{sec:pf-lb}) shows that if $q = O(\frac{1}{K} \log \frac{eN}{K} \wedge \frac{N^2}{K^4})$, then there exists no test which can detect the planted subgraph reliably.
%upper bounds the $d_{\rm TV} (\Prob_0, \Prob_1 )$ and thus establishes the statistical lower bound.
%\begin{lemma} \label{prop:lowerbound}
%Suppose $p=cq$ for a constant $c>1$. There exists a constant $C$ which only depends on $c$ such that for any $1\le K\le N$ and any $q \le C( \frac{1}{K} \log \frac{eN}{K} \wedge \frac{N^2}{K^4} )$,
%\begin{align*}
%d_{\rm TV} (\Prob_0, \Prob_1 ) \le 1/2.
%\end{align*}
%\end{lemma}
\begin{proposition} \label{prop:lowerbound}
Suppose $p=cq$ for some constant $c>1$. There exists a function $h:\reals_+ \to \reals_+$
%depending on $c$ and
satisfying $h(0+)=0$ such that the following holds: For any $1\le K\le N$, $C>0$ and $q \le C( \frac{1}{K} \log \frac{eN}{K} \wedge \frac{N^2}{K^4} )$,
\begin{align}
d_{\rm TV} (\Prob_0, \Prob_1 ) \le h(Cc^2)+\exp(-K/8).
\label{eq:lb-tv}
\end{align}
\end{proposition}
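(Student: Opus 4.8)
The plan is to bound $d_{\rm TV}(\Prob_0,\Prob_1)$ via the $\chi^2$-divergence, using the standard inequality $d_{\rm TV}(\Prob_0,\Prob_1) \le \sqrt{\tfrac12 \chi^2(\Prob_1\|\Prob_0)}$ after a truncation step to handle the randomness in the planted set size. Recall that under $H_1$ the planted set $S$ has $|S|\sim\Binom(N,K/N)$, so $\prob{|S| > 2K}\le \exp(-K/8)$ by a Chernoff bound; this is exactly where the $\exp(-K/8)$ term in \prettyref{eq:lb-tv} comes from. Let $\Prob_1'$ denote the distribution of $G$ under $H_1$ conditioned on the event $|S|\le 2K$ (or, more conveniently, let $\Prob_1'$ be the mixture over deterministic planted sets $T$ with $|T|\le 2K$, reweighted appropriately). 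Then $d_{\rm TV}(\Prob_1,\Prob_1') \le \exp(-K/8)$, and it suffices to show $d_{\rm TV}(\Prob_0,\Prob_1') \le h(Cc^2)$ for a suitable $h$ with $h(0+)=0$.

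Next I would compute $\chi^2(\Prob_1'\|\Prob_0)$. Writing $\Prob_1' = \Expect_{T}[\Prob_T]$ where $\Prob_T$ is the law of $\calG(N,q)$ with edges inside $T$ boosted to $p=cq$, and using the fact that $\Prob_0$ is a product measure over the $\binom{N}{2}$ potential edges, the $\chi^2$-divergence factorizes in the well-known way:
\begin{align*}
1 + \chi^2(\Prob_1'\|\Prob_0) = \Expect_{T,T'}\left[ \prod_{\{i,j\}} \left(1 + \frac{(p-q)^2}{q(1-q)}\,\indc{i,j\in T}\,\indc{i,j\in T'} \right) \right] = \Expect_{T,T'}\left[ \left(1+\rho\right)^{\binom{|T\cap T'|}{2}} \right],
\end{align*}
where $\rho = \frac{(p-q)^2}{q(1-q)} = \frac{(c-1)^2 q}{1-q}$ and $T,T'$ are two independent copies of the (truncated, reweighted) planted set. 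Using $|T\cap T'| \le 2K$ and $(1+\rho)^{\binom{m}{2}} \le \exp(\rho m^2/2)$, together with $\rho \le 2(c-1)^2 q \le 2(c-1)^2 C(\frac1K\log\frac{eN}{K}\wedge \frac{N^2}{K^4})$, one reduces the problem to controlling $\Expect[\exp(\rho |T\cap T'|^2/2)]$. Since $|T\cap T'|$ is stochastically dominated by (a scalar multiple of) a $\Binom(N,(2K/N)^2) = \Binom(N,4K^2/N^2)$-type variable — or more precisely a hypergeometric intersection — its moment generating / exponential moments can be estimated: $\Expect[\exp(\lambda |T\cap T'|^2/2)]$ stays bounded by a function of $\lambda$ tending to $1$ as $\lambda\to 0$, provided $\lambda$ times the typical scale $(K^2/N)^2$ and the tail contributions are controlled. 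The two terms in the minimum defining the admissible range of $q$ are precisely what make the two contributions to this exponential moment bounded: the $\frac{N^2}{K^4}$ term controls the "bulk" contribution where $|T\cap T'| \approx K^2/N$, while the $\frac1K\log\frac{eN}{K}$ term controls the large-deviations tail where $|T\cap T'|$ is of order $K$ (there are at most $\binom{N}{m}/\binom{K}{m}^2$-ish configurations, and $\log$ of that is $O(m\log\frac{eN}{K})$, which must beat $\rho m^2/2 \lesssim \rho K m/2$).

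The main obstacle, and the technical heart of the argument, is this last step: showing that $\Expect[(1+\rho)^{\binom{|T\cap T'|}{2}}]$ is bounded by a function $h(Cc^2)$ of the product $Cc^2$ alone (hence $\to 1$ as $Cc^2\to 0$), uniformly over all $1\le K\le N$. This requires splitting the expectation over $|T\cap T'| = m$ into ranges — say $m \lesssim K^2/N$ versus $K^2/N \lesssim m \le 2K$ — and in each range bounding the number of pairs $(T,T')$ with that intersection against the exponential growth $(1+\rho)^{\binom{m}{2}}$, carefully tracking that both bounds degrade gracefully (not just are finite) as $C\to 0$. The binomial factor $\binom{|T\cap T'|}{2}\sim m^2/2$ being quadratic in $m$ is what forces the $q = O(N^2/K^4)$ constraint rather than a milder one, and getting the constants to collapse into a single argument $Cc^2$ — using $c^2 q \le c^2 C(\cdots)$ and $\rho \asymp (c-1)^2 q \le c^2 q$ — is the bookkeeping one must do with care. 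Everything else (the Chernoff truncation, the $\chi^2$-tensorization, the $d_{\rm TV}\le\sqrt{\chi^2/2}$ step) is routine.
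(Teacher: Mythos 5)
Your high-level structure matches the paper's exactly: the Chernoff truncation to the event $|S|\le 2K$ (giving the $\exp(-K/8)$ term), the $\chi^2$-tensorization $1+\chi^2 = \Expect_{T,T'}\bigl[(1+\rho)^{\binom{|T\cap T'|}{2}}\bigr]$ with $T,T'$ independent copies of the planted set, and the reduction to an exponential-moment bound on the hypergeometric intersection $|T\cap T'|$. Your interpretation of the roles of the two terms in $\frac1K\log\frac{\eexp N}{K}\wedge\frac{N^2}{K^4}$ is also correct. (Minor quibble: the paper uses $2d_{\rm TV}^2\le\log(1+\chi^2)$ rather than $d_{\rm TV}\le\sqrt{\chi^2/2}$, but either works.)

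However, you explicitly defer the key technical estimate, and that estimate is where essentially all of the work in the paper's proof lies. The paper isolates it as a standalone lemma (\prettyref{lmm:H}): for $H\sim\Hyper(p,m,m)$ and $\lambda = b\bigl(\frac1m\log\frac{\eexp p}{m}\wedge\frac{p^2}{m^4}\bigr)$ with $b$ small, one has $\Expect[\exp(\lambda H^2)]\le\tau(b)$ with $\tau(0+)=1$, \emph{uniformly in $p$ and $m$}. Your sketch for this step --- split on the value $m=|T\cap T'|$ and count configurations against the exponential growth --- is not what the paper does, and the hard part is exactly the uniformity: you must show the bound degrades gracefully to $1$ (not merely stays finite) as $Cc^2\to 0$, simultaneously over all regimes of $K$ relative to $N$. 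The paper instead splits on the \emph{subset} size into three regimes; the two extreme regimes are easy, but the intermediate regime $\log\frac{\eexp p}{m}\le m\le p/4$ requires a genuinely different idea: dominate $H$ stochastically by a binomial $S$, then apply Cauchy--Schwarz and a decoupling inequality to replace $\Expect[\exp(\lambda S^2)]$ by $\Expect[\exp(8\lambda ST)]$ with $T$ an independent copy, which can then be handled by conditioning and truncating on $T$. Without this device (or some substitute for it), the combinatorial sketch you outline does not yet close the gap, and that gap is the heart of the proposition.
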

%\begin{lemma} \label{prop:lowerbound}
%Suppose $p=cq$ for some constant $c>1$.  There exists a universal constant $C>0$ such that for any $1\le K \le N$ and $p \le C( \frac{1}{K} \log \frac{eN}{K} \wedge \frac{N^2}{K^4} )$,
%\begin{align*}
%d_{\rm TV} (\Prob_0, \Prob_1 ) \le 1/4 +\exp(-K/8).
%\end{align*}
%\end{lemma}

\subsection{Upper Bound}
Let $A$ denote the adjacency matrix of the graph $G$. The detection limit can be achieved by
%This subsection gives a non-asymptotic analysis of
the linear test statistic and scan test statistic proposed in \cite{arias2013community,verzelen2013sparse}:
\begin{align}
T_{\rm lin}  \triangleq \sum_{i<j} A_{ij}, \quad T_{\rm scan} \triangleq \max_{S': |S'| = K} \sum_{i,j \in S': i<j} A_{ij}, \label{eq:tests}
\end{align}
which correspond to the total number of edges in the whole graph and the densest $K$-subgraph, respectively. Interestingly, the exact counterparts of these tests have been proposed and shown to be minimax optimal for detecting submatrices in Gaussian noise \cite{butucea2013,Kolar2011,MaWu13}. The following lemma bounds the error probabilities of the linear and scan test.
\begin{proposition} \label{prop:upperbound}
Suppose $p=cq$ for a constant $c>1$. For the linear test statistic, set $\tau_1=\binom{N}{2}q + \binom{K}{2}(p-q)/2$. For the scan test statistic, set $\tau_2=\binom{K}{2}(p+q)/2$. Then there exists a constant $C$ which only depends on $c$ such that
\begin{align*}
\mathbb{P}_0 [ T_{\rm lin} > \tau_1] + \mathbb{P}_1 [ T_{\rm lin} \le \tau_1 ]  &\le 2 \exp \left( - C \frac{K^4q}{N^2}\right) + \exp \left( -\frac{K}{200}\right) \\
\mathbb{P}_0 [ T_{\rm scan} > \tau_2] + \mathbb{P}_1 [ T_{\rm scan} \le \tau_2 ] & \le 2 \exp \left( K \log \frac{Ne}{K} - C K^2q \right) +\exp \left( -\frac{K}{200}\right) .
\end{align*}
\end{proposition}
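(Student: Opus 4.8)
The plan is to establish the four one-sided error bounds separately by standard concentration, so the bulk of the work is bookkeeping; the only inputs are the Bernstein/Chernoff tails for a sum $X$ of independent $\{0,1\}$-valued variables with mean $\mu$ (for $t>0$, $\Prob[X\ge\mu+t]\le\exp(-\tfrac{t^2}{2\mu+2t/3})$, and for $0<t<\mu$, $\Prob[X\le\mu-t]\le\exp(-\tfrac{t^2}{2\mu})$), the estimate $\binom{N}{K}\le(Ne/K)^K$, and the concentration of $|S|\sim\Binom(N,K/N)$ about $K$. All unnamed constants will depend only on $c$. I would first note that, since $p>q$, the $H_1$ graph stochastically contains the $H_0$ graph, so $T_{\rm lin}$ and $T_{\rm scan}$ are stochastically larger under $H_1$ and each test has total error at most $1$; this lets me assume $K$ exceeds an absolute constant, since for bounded $K$ the asserted right-hand sides exceed $1$ once $C$ is small enough (using $K^4q/N^2\le K^2$ and $K\log(Ne/K)\ge0$, both from $K\le N$).

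For the \emph{linear test}: under $H_0$, $T_{\rm lin}\sim\Binom(\binom{N}{2},q)$ has mean $\binom{N}{2}q$ and $\tau_1-\binom{N}{2}q=\binom{K}{2}(p-q)/2=\Theta(K^2q)$, which is $O(\binom{N}{2}q)$ because $K\le N$; hence Bernstein is variance-dominated and $\Prob_0[T_{\rm lin}>\tau_1]\le\exp(-\Theta(K^4q/N^2))$. Under $H_1$, I would condition on $S$: $\Prob_1[|S|<0.9K]\le\exp(-(0.1)^2K/2)=\exp(-K/200)$, and on the complement $\binom{|S|}{2}\ge\tfrac34\binom{K}{2}$ (valid for $K$ large), so the conditional mean $\binom{N}{2}q+\binom{|S|}{2}(p-q)$ exceeds $\tau_1$ by $\ge\tfrac14\binom{K}{2}(p-q)=\Theta(K^2q)$ while remaining $\le\binom{N}{2}p=\Theta(N^2q)$; the lower-tail bound then gives $\exp(-\Theta(K^4q/N^2))$ on that event, and integrating over $S$ completes the first inequality.

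For the \emph{scan test}: under $H_0$, for each fixed $K$-set $S'$ the count $\sum_{i<j\in S'}A_{ij}\sim\Binom(\binom{K}{2},q)$ has mean $\binom{K}{2}q$ and $\tau_2-\binom{K}{2}q=\tfrac{c-1}{2}\binom{K}{2}q$, a fixed multiple of the mean, so multiplicative Chernoff gives $\exp(-\Theta(K^2q))$ per set, and a union bound over $(Ne/K)^K$ sets yields $\Prob_0[T_{\rm scan}>\tau_2]\le\exp(K\log\tfrac{Ne}{K}-\Theta(K^2q))$. Under $H_1$, I would exhibit a single good $K$-set: conditioned on $S$, take $S'$ with $|S'\cap S|=\min(|S|,K)=:m$, so that $\sum_{i<j\in S'}A_{ij}$ is a sum of $\binom{m}{2}$ independent $\Bern(p)$'s and $\binom{K}{2}-\binom{m}{2}$ independent $\Bern(q)$'s, with mean $\binom{K}{2}q+\binom{m}{2}(p-q)$; on $\{|S|\ge0.9K\}$ one has $m\ge0.9K$, hence $\binom{m}{2}\ge\tfrac34\binom{K}{2}$ and the mean is $\ge\tau_2+\tfrac14\binom{K}{2}(p-q)=\tau_2+\Theta(K^2q)$ while $\le\binom{K}{2}p=\Theta(K^2q)$, so the lower-tail bound gives $\exp(-\Theta(K^2q))$; adding $\exp(-K/200)$ for $\{|S|<0.9K\}$ and folding the bare $\exp(-\Theta(K^2q))$ into $\exp(K\log\tfrac{Ne}{K}-\Theta(K^2q))$ (legitimate since $K\log\tfrac{Ne}{K}\ge0$) gives the second inequality.

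The hard part is the scan test under $H_1$. Since $\Expect|S|=K$ exactly, with constant probability $|S|<K$, so one cannot take $S'\subseteq S$; the fix is to let $S'$ absorb as much of $S$ as possible, and the delicate point is checking that the resulting \emph{mixture} expectation --- part $p$-edges (inside $S\cap S'$), part $q$-edges (the rest) --- still beats the midpoint threshold $\tau_2=\binom{K}{2}(p+q)/2$ by a margin of order $K^2q$. This is exactly what conditioning on $|S|\ge(1-\delta)K$ secures, because it keeps a $(1-\delta)^2>\tfrac12$ fraction of the $\binom{K}{2}$ pairs inside $S$, and $\tfrac12$ is the break-even fraction imposed by the form of $\tau_2$; the numerical constants require $K$ past an absolute threshold, which is why bounded $K$ was disposed of at the outset by stochastic dominance.
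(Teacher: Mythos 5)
Your proposal is correct and follows essentially the same route as the paper: Bernstein/multiplicative Chernoff for the edge counts, a Chernoff bound on $|S|\sim\Binom(N,K/N)$ to control the random community size, and a union bound over $\binom{N}{K}\le(Ne/K)^K$ sets for the scan statistic under $H_0$. Your explicit construction of the $K$-set $S'$ containing $S\cap S'$ of size $m=\min(|S|,K)$ and decomposition of $\sum_{i<j\in S'}A_{ij}$ into $\binom{m}{2}$ $p$-edges plus $\binom{K}{2}-\binom{m}{2}$ $q$-edges is, if anything, a slightly cleaner statement of the stochastic-dominance step than the paper's (which says ``dominated'' where it means ``dominates'' and then tacitly includes the $q$-edges in computing the margin), and it yields exactly the paper's exponent $\bigl(2\binom{K'\wedge K}{2}-\binom{K}{2}\bigr)(p-q)$.
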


To illustrate the implications of the above lower and upper bounds, consider the $\PDS(N,K,p,q)$ problem with the parametrization $p=cq$, $q =N^{-\alpha}$ and $K=N^\beta$ for $ \alpha>0$ and $\beta \in (0,1)$ and $c>1$. In this asymptotic regime, the fundamental detection limit is characterized by the following function
\begin{align}
 \beta^\ast(\alpha) \triangleq \alpha \wedge \left( \frac{1}{2} + \frac{\alpha}{4} \right), \label{eq:detectionlimit}
\end{align}
which gives the statistical boundary in \prettyref{fig:phase}.
%\begin{enumerate}
%	\item If $\beta < \beta^*(\alpha)$, then $\dTV \to 0$ as a  consequence of \prettyref{prop:lowerbound};
%	\item If $\beta > \beta^*(\alpha)$, then  \prettyref{prop:upperbound} implies that the test $\phi(A) = \indc{T_{\rm lin} > \tau_1 \text{ or } T_{\rm scan} > \tau_2}$ achieves vanishing Type-I+II error probability.
%\end{enumerate}
Indeed, if $\beta < \beta^*(\alpha)$, as a consequence of \prettyref{prop:lowerbound}, $\Prob_0\{\phi(G)=1\} + \Prob_1\{\phi(G)=0\} \to 1$ for any sequence of tests.
%then $\dTV(\Prob_0,\Prob_1) \to 0$ as a  consequence of \prettyref{prop:lowerbound};
 Conversely, if $\beta > \beta^*(\alpha)$, then  \prettyref{prop:upperbound} implies that the test $\phi(G) = \indc{T_{\rm lin} > \tau_1 \text{ or } T_{\rm scan} > \tau_2}$ achieves vanishing Type-I+II error probabilities. More precisely, the linear test succeeds in the regime $\beta>\frac{1}{2} + \frac{\alpha}{4}$, while the scan test succeeds in the regime $\beta> \alpha$.

Note that $T_{\rm lin}$ can be computed in linear time. However, computing $T_{\rm scan}$ amounts to enumerating all subsets of $[N]$ of cardinality $K$, which can be computationally intensive.
%takes at least $\Theta(N^K)$ steps.
Therefore it is unclear whether there exists a polynomial-time solver in the regime $\alpha< \beta <\frac{1}{2} + \frac{\alpha}{4}$.
Assuming the \PC Hypothesis, this question is resolved in the negative in the next section.

\section{Computational Lower Bounds}\label{sec:computationallimits}
In this section, we establish the computational lower bounds for the \PDS problem assuming the intractability of the planted clique problem. %To this end, we first formally define the planted clique hypothesis. Then,
We show that the \PDS problem can be approximately reduced from the \PC problem of appropriately chosen parameters in randomized polynomial time.
%randomized polynomial-time reduction scheme which reduces the planted clique detection problem to the distributional planted densest subgraph detection problem.
Based on this reduction scheme, we establish a formal connection between the \PC problem and the \PDS problem in \prettyref{prop:reduction}, and the desired computational lower bounds follow as \prettyref{thm:main}.

We aim to reduce the $\PC(n,k,\gamma)$ problem to the $\PDS(N,K,cq,q)$ problem.
% with $p=2q$ via the following randomized polynomial-time reduction.
For simplicity, we focus on the case of $c=2$; the general case follows similarly with a change
in some numerical constants that come up in the proof.
% Fix a positive integer $\ell$ and let $N=n\ell$, $K=k\ell$.
We are given an adjacency matrix $A \in \{0,1\}^{n \times n}$, or equivalently, a graph $G,$  and with the help of additional
randomness, will map it to an adjacency matrix $\tA \in \{0,1\}^{N \times N},$ or equivalently, a graph $\tG$
such that the hypothesis $H_0^{\rm C}$ (resp.\ $H_1^{\rm C}$) in \prettyref{def:PlantedCliqueDetection} is mapped to $H_0$ exactly (resp.\ $H_1$ approximately) in \prettyref{def:HypTesting}. In other words, if $A$ is drawn from $\calG(n,\gamma)$, then $\tA$ is distributed according to $\Prob_0$; If $A$ is drawn from $\calG(n,k,1,\gamma)$, then the distribution of $\tA$ is close in total variation to $\Prob_1$.
%%In other words, if $A$ is drawn from $\calG(n,\gamma)$ under $H_0^{\rm C}$, then $\tA$ is distributed according to the null distribution $\Prob_0$ under $H_0$; If $A$ is drawn from $\calG(n,k,1,\gamma)$ under $H_1^{\rm C}$, then the distribution of $\tA$ is close in total variation distance to $\Prob'_1$ under $H'_1$.

Our reduction scheme works as follows. Each vertex in $\tG$ is randomly assigned a parent vertex in $G,$
with the choice of parent being made independently for different vertices in $\tG,$  and uniformly
over the set $[n]$ of vertices in $G.$  Let $V_s$ denote the set of vertices in $\tG$ with parent
$s\in [n]$  and let $\ell_s=|V_s|$. Then the set of children nodes $\{V_s: s \in [n]\}$ form a random partition of $[N]$.
For any $1 \leq s \leq t \leq n,$ the number of edges, $E(V_s,V_t)$, from vertices in $V_s$ to vertices in $V_t$
in $\tG$ will be selected randomly with a conditional probability distribution specified below.
Given  $E(V_s,V_t),$  the particular  set of edges with cardinality $E(V_s,V_t)$ is chosen uniformly at
random.
% from the set of all possibilities.

It remains to specify, for $1\leq s \leq t \leq n,$
the conditional distribution of $E(s,t)$ given $l_s, l_t,$ and $A_{s,t}.$
Ideally, conditioned on $\ell_s$ and $\ell_t$, we want to construct
% the number of edges $E(s,t)$ to be distributed according to $\Binom()$
a Markov kernel from $A_{s,t}$ to $E(s,t)$ which maps  $\Bern(1)$  to the desired edge distribution $\Binom(\ell_s\ell_t,p)$, and $\Bern(\gamma)$  to $\Binom(\ell_s\ell_t,q)$, depending on whether both $s$ and $t$ are in the clique or not, respectively. Such a kernel, unfortunately, provably does not exist. Nonetheless, this objective can be accomplished approximately in terms of the total variation. For $s=t \in [n],$  let $E(V_s,V_t)\sim\Binom( \binom{\ell_t}{2}, q).$
For $ 1 \le s < t \le n$, denote $P_{\ell_s \ell_t} \triangleq \Binom(\ell_s \ell_t,p)$ and $Q_{\ell_s \ell_t} \triangleq \Binom(\ell_s \ell_t,q)$.
Fix $0 < \gamma \leq \frac{1}{2}$ and put $m_0 \triangleq \lfloor \log_2 (1/\gamma) \rfloor$.
Define
    \begin{align*}
    P'_{\ell_s \ell_t} (m)=  \left\{
    \begin{array}{rl}
    P_{\ell_s \ell_t} (m) + a_{\ell_s \ell_t} & \text{for } m=0,\\
    P_{\ell_s \ell_t}(m) & \text{for } 1 \le m \le m_0, \\
    \frac{1}{\gamma} Q_{\ell_s \ell_t} (m) & \text{for } m_0 < m \le \ell_s \ell_t .
    \end{array} \right.
    \end{align*}
    where $a_{\ell_s \ell_t}=\sum_{m_0<m \le  \ell_s\ell_t} [ P_{\ell_s \ell_t}(m) - \frac{1}{\gamma} Q_{\ell_s \ell_t}(m) ]$.
    Let $Q'_{\ell_s \ell_t} = \frac{1}{1-\gamma} (Q_{\ell_s \ell_t} - \gamma P'_{\ell_s \ell_t})$.
%    and $\widetilde{Q}_{\ell_s \ell_t}$ is defined such that $(1-\gamma) \widetilde{Q}_{\ell_s \ell_t}+ \gamma \widetilde{P}_{\ell_s \ell_t} = Q_{\ell_s \ell_t}$.
As we show later, $Q'_{\ell_s \ell_t}$ and $P'_{\ell_s \ell_t}$ are well-defined probability distributions as long as $\ell_s, \ell_t \le 2 \ell$ and $16 q \ell^2 \le 1$, where $\ell=N/n$.  Then, for $1\leq s < t \leq n,$ let the conditional distribution of $E(V_s,V_t)$ given $\ell_s,\ell_t,$ and $A_{s,t}$ be given by
    \begin{align}
    E(V_s, V_t) \sim \left\{
    \begin{array}{rl}
    P'_{\ell_s \ell_t} & \text{if } A_{st}=1, \ell_s, \ell_t \le 2\ell\\
    Q'_{\ell_s \ell_t} & \text{if } A_{st}=0, \ell_s, \ell_t \le 2 \ell \\
    Q_{\ell_s \ell_t} & \text{if } \max\{ \ell_s, \ell_t \}> 2 \ell.
    \end{array} \right. \label{eq:edgedist}
    \end{align}

The next proposition (proved in \prettyref{sec:pf-reduction}) shows that the randomized reduction defined above maps $\calG(n,\gamma)$ into $\calG(N,q)$ under the null hypothesis and $\calG(n,k,\gamma)$ approximately  into $\calG(N,K,p,q)$ under the alternative hypothesis, respectively.
The intuition behind the reduction scheme is as follows:
By construction, $(1-\gamma) Q'_{\ell_s \ell_t}+ \gamma P'_{\ell_s \ell_t} = Q_{\ell_s \ell_t}= \Binom(\ell_s \ell_t,q)$ and therefore the null distribution of the \PC problem is exactly matched to that of the \PDS problem, i.e., $P_{\tG|H_0^C}=\Prob_0$.
%It remains to show that the alternative distributions are approximately matched, which constitutes the core of the proof.
The core of the proof lies in establishing that the alternative distributions are approximately matched.
The key observation is that $P'_{\ell_s \ell_t}$ is close to $P_{\ell_s \ell_t}= \Binom(\ell_s \ell_t,p)$ and thus for nodes with distinct parents $s \neq t$ in the planted clique, the number of edges $E(V_s,V_t)$ is approximately distributed as the desired $\Binom(\ell_s \ell_t,p)$; for nodes with the same parent $s$ in the planted clique, even though $E(V_s,V_s)$ is distributed as $\Binom(\binom{\ell_s}{2},q)$ which is not sufficiently close to the desired $\Binom(\binom{\ell_s}{2},p)$, after averaging over the random partition $\{V_s\}$, the total variation distance becomes negligible.
%The first step shows that  $P'_{\ell_s \ell_t}$ and $Q'_{\ell_s \ell_t}$ are well-defined probability measures with $(1-\gamma) Q'_{\ell_s \ell_t}+ \gamma P'_{\ell_s \ell_t} = Q_{\ell_s \ell_t}= \Binom(\ell_s \ell_t,q)$ and consequently $P_{\tG|H_0^C}=\Prob_0$. The second step shows that $d_{\rm TV} ( P'_{\ell_s \ell_t}, P_{\ell_s\ell_t}) \le  4 (8q \ell^2)^{(m_0+1)}.$
\begin{proposition}\label{prop:reduction}
Let $\ell, n \in \naturals$, $k \in [n]$ and $\gamma \in (0,\frac{1}{2} ]$. Let $N= \ell n$, $K=k\ell$, $p=2q$ and $m_0= \lfloor \log_2 (1/\gamma) \rfloor$. Assume that $16 q \ell^2 \le  1$ and $k \geq 6e \ell$.
If $G \sim \mathcal{G}(n, \gamma)$, then $\tG \sim \calG(N,q)$, \ie, $P_{\tG|H_0^C}=\Prob_0$.
If $G \sim \calG(n,k,1,\gamma)$, then
\begin{align}
d_{\rm TV} \left(P_{\tG|H_1^C}, \Prob_1 \right) \le e^{-\frac{K}{12}} + 1.5 k e^{-\frac{\ell}{18}} + 2 k^2 (8q\ell^2)^{m_0+1} + 0.5 \sqrt{e^{72e^2 q \ell^2} -1} + \sqrt{0.5k} e^{-\frac{\ell}{36}}. \label{eq:defxi}
\end{align}
%where $\widetilde{\Prob}_1$ is the distribution of $\tA$ under $$.
\end{proposition}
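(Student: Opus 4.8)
The plan is to couple the law of $\tG$ under $H_1^C$ with $\Prob_1$ by interpolating through a sequence of intermediate distributions and bounding each step in total variation. First I would reveal the randomness in stages: (i) the parent assignment, i.e. the random partition $\{V_s:s\in[n]\}$ with sizes $\ell_s=|V_s|$; (ii) the planted clique vertex set $C\subseteq[n]$ with $|C|=k$ in $G$; (iii) the edge counts $E(V_s,V_t)$; (iv) the placement of edges given the counts. Under $\Prob_1=\calG(N,K,p,q)$ one can describe the same experiment: each vertex of $[N]$ is in the planted set $S$ independently with probability $K/N=k/n$, so conditionally on the partition the event $\{s\in C\}$ corresponds to $V_s\subseteq S$, but under $\Prob_1$ the membership is decided vertex-by-vertex rather than block-by-block. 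So the comparison naturally splits into two pieces: a ``block vs.\ vertex'' mismatch in how $S$ is formed, and an ``edge-distribution'' mismatch coming from the fact that $P'_{\ell_s\ell_t}\neq P_{\ell_s\ell_t}$ and that within a block $E(V_s,V_s)\sim\Binom(\binom{\ell_s}{2},q)$ rather than $\Binom(\binom{\ell_s}{2},p)$.

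The main steps I would carry out, in order. (1) \emph{Control the block sizes.} Each $\ell_s\sim\Binom(N,1/n)$ with mean $\ell$; by a Chernoff bound $\prob{\ell_s>2\ell}\le e^{-\ell/3}$ or so, and more delicately I need the clique-block sizes concentrated: summing over $s\in C$, $\sum_{s\in C}\ell_s=|\bigcup_{s\in C}V_s|\sim\Binom(N,k/n)$ with mean $K$, and a lower-tail Chernoff bound gives $\prob{\sum_{s\in C}\ell_s<K/2}\le e^{-K/12}$ — this is the source of the $e^{-K/12}$ term. On the complementary event I condition and proceed; the union bound over the at most $k$ clique blocks having $\ell_s>2\ell$ and related control of $\max_s\ell_s$ (Poisson-tail, needing $k\ge 6e\ell$ to make $\binom{k}{2}$-type error terms summable) yields the $1.5k\,e^{-\ell/18}$ and $\sqrt{0.5k}\,e^{-\ell/36}$ terms. (2) \emph{Edge distribution across distinct clique parents.} For $s<t$ both in $C$ with $\ell_s,\ell_t\le 2\ell$, I need $\dTV(P'_{\ell_s\ell_t},P_{\ell_s\ell_t})=\dTV(P'_{\ell_s\ell_t},\Binom(\ell_s\ell_t,p))$. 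By construction $P'$ and $P$ agree on $\{0\}$ up to the correction $a_{\ell_s\ell_t}$ and on $\{1,\dots,m_0\}$ exactly, so the TV distance is governed by the tail mass $\sum_{m>m_0}P_{\ell_s\ell_t}(m)$ plus $\frac1\gamma\sum_{m>m_0}Q_{\ell_s\ell_t}(m)$; using $p=2q\le \frac{1}{8\ell^2}$ and binomial tail bounds, $\prob{\Binom(\ell_s\ell_t,p)>m_0}\lesssim (\ell_s\ell_t\, p)^{m_0+1}\le(8q\ell^2)^{m_0+1}$, and summing over the $\binom{k}{2}\le k^2$ pairs produces the $2k^2(8q\ell^2)^{m_0+1}$ term. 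I should also check here that $P'_{\ell_s\ell_t},Q'_{\ell_s\ell_t}$ are genuine probability measures (nonnegativity of $a_{\ell_s\ell_t}$-corrected mass and of $Q-\gamma P'$) under $16q\ell^2\le1$. (3) \emph{Edge distribution within a clique parent (the averaging trick).} For $s\in C$, $E(V_s,V_s)\sim\Binom(\binom{\ell_s}{2},q)$ but $\Prob_1$ wants $\Binom(\binom{\ell_s}{2},p)$; these differ by $\Theta(1)$ in TV for fixed $\ell_s$, so a per-block bound is hopeless. Instead I condition only on the multiset $\{\ell_s:s\in C\}$ and average over which labeled partition realizes it: the within-block edges together with the uniform random relabeling of $[N]$ make the overall within-$S$ subgraph exchangeable, and one compares the \emph{aggregate} law of the induced subgraph on $S$ under the reduction versus under $\calG(|S|,p)$. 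The discrepancy is then a second-moment/$\chi^2$ quantity: I would bound $\chi^2$ between the mixture (over partitions) of product-$\Binom(\binom{\ell_s}{2},q)$ laws and the target, getting a factor like $\exp(c\,q\ell^2\sum\ldots)-1$; tracking constants with $\ell_s\le2\ell$ and $|C|=k$ gives $e^{72e^2q\ell^2}-1$ under the square root — this is the $0.5\sqrt{e^{72e^2q\ell^2}-1}$ term, and the $\tfrac12\sqrt{\chi^2}\ge\dTV$ inequality explains the form.

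The hard part is step (3): every other term is a routine Chernoff or binomial-tail estimate, but the within-block mismatch is genuinely $\Omega(1)$ pointwise and only becomes negligible after integrating over the random partition. Making that rigorous requires identifying the right conditioning (on cluster sizes, with the partition labels re-randomized so that the within-$S$ graph is a genuine mixture over Erd\H{o}s--R\'enyi-like laws), computing or upper-bounding the resulting $\chi^2$-divergence, and showing the cross terms in the second moment collapse to the clean exponential $e^{O(q\ell^2)}-1$. I would organize the final bound as a triangle inequality over the four reductions — (a) restrict to the good block-size event ($e^{-K/12}$, and the $k\,e^{-\ell/18}$, $\sqrt{k}\,e^{-\ell/36}$ pieces), (b) swap cross-block clique edge laws $P'\to P$ ($k^2(8q\ell^2)^{m_0+1}$), (c) swap the within-block/aggregate law to the target via $\chi^2$ ($\tfrac12\sqrt{e^{72e^2q\ell^2}-1}$), (d) observe the non-clique part is already exactly matched since $(1-\gamma)Q'+\gamma P'=Q=\Binom(\cdot,q)$ and $\calG(N,q)$-type edges outside $S$ are i.i.d.\ — summing these gives exactly \eqref{eq:defxi}.
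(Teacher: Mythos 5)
Your high-level decomposition — condition on a good block-size event, swap the cross-block laws $P'\to P$, then handle the within-block mismatch by a $\chi^2$ computation after averaging over the partition — is the same route the paper takes, and your assignment of which error terms come from which step is roughly right. But several details are off, and the hardest part is left unspecified in a way that hides real work.

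First, there is no ``block vs.\ vertex'' mismatch in the formation of $S$. With $C\subset[n]$ a uniform $k$-subset and each vertex of $[N]$ independently assigned a uniform parent in $[n]$, the event $\{v\in S\}=\{\pi(v)\in C\}$ has probability $k/n=K/N$ and is independent across $v$; so $S$ has \emph{exactly} the iid-$\Bern(K/N)$ law of $\Prob_1$. That exact match (your item (d)) is the starting point, not a residual observation — it lets you condition on $S$ and reduce the whole problem to comparing edge laws on $\calE(S)$. Second, the $e^{-K/12}$ term is the upper tail $\Prob[|S|>1.5K]$, not $\Prob[|S|<K/2]$: you need $|S|\le1.5K$ so that conditional on $S$, each $|V_t|$ is stochastically dominated by $\Binom(1.5K,1/k)$ and stays below $2\ell$. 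Third, $k\geq6e\ell$ is not used to make a $\binom{k}{2}$-sized union bound summable; it enters a stochastic-comparison lemma that dominates $\Binom(1.5K,1/k^2)$ by $\Binom(3\ell,e/k)$, which is what ultimately produces the clean $72e^2q\ell^2$ exponent. Finally, for your step (3) you correctly identify the $\chi^2$ target, but the expression $\Expect\bigl[\prod_{s,t}\,c^{\binom{|V_s\cap\tV_t|}{2}}\bigr]$ does not factorize because the intersection sizes are dependent. The paper needs the negative-association property of the balls-in-bins intersection counts to get a product upper bound, then stochastic domination to reduce to a single binomial, then a decoupling inequality to pass from $\binom{T}{2}$ to $ST$ and compute the moment generating function. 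You flag this as the hard part — rightly — but without naming these devices the proposal does not yet contain a proof of the $0.5\sqrt{e^{72e^2q\ell^2}-1}$ term.
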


An immediate consequence of \prettyref{prop:reduction} is the following result (proved in \prettyref{sec:pf-test}) showing that any $\PDS$ solver induces a solver for a corresponding instance of the $\PC$ problem.

\begin{proposition}
Let the assumption of \prettyref{prop:reduction} hold.
  Suppose $\phi: \{0,1\}^{\binom{N}{2}} \to \{0, 1\}$ is a test for  $\PDS(N,K,2q,q)$  with Type-I+II error probability $\eta$.
%  , \ie,
%  \begin{align*}
%  \Prob_0 \{  \phi(A') =1 \} + \Prob'_1 \{ \phi(A')=0 \} \le \eta.
%  \end{align*}
%  where $A'$ is the adjancency matrix.
%  Fix a positive constant $\gamma \le 1/2$ and let $m_0=\lfloor \log_2 (1/\gamma) \rfloor$. Fix positive integers $k \le n$ such that $k/n=K/N$ and
%  $\ell=N/n \ge 1$.
  %Let $n=N/\ell$, $k=K/\ell$ \nb{need to floor or ceil??} and .
%  Suppose that $16 q \ell^2 \le 1$ and $k\ge6e \ell$.
  Then $G \mapsto \phi(\tG)$ is a test for the $\PC(n,k,\gamma)$ whose Type-I+II error probability is upper bounded by $\eta+ \xi$ with $\xi$ given by the right-hand side of \prettyref{eq:defxi}.
  %\begin{align*}
%  \xi=e^{-\frac{K}{12}} + 1.5 k e^{-\frac{\ell}{18}} + k^2 (8q\ell^2)^{m_0+1} + 0.5 \sqrt{e^{72e^2 q \ell^2} -1} + \sqrt{0.5k} e^{-\frac{\ell}{36}}.
%  \end{align*}
  \label{prop:test}
    \end{proposition}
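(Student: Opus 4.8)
The plan is to derive this as an immediate corollary of \prettyref{prop:reduction}, using nothing beyond the data-processing inequality for the total variation distance. I would begin by fixing notation: let $\psi(G) \triangleq \phi(\tG)$, where $\tG$ is the graph produced from $G$ by the randomized reduction constructed above. Because the reduction injects fresh randomness (the independent parent assignments and the conditionally sampled edge counts $E(V_s,V_t)$), $\psi$ is a \emph{randomized} test --- precisely the type allowed in \prettyref{hyp:HypothesisPlantedClique} --- and I would record that $G \mapsto \tG$ is a Markov kernel, so post-composing it with $\phi$ (together with any independent coins $\phi$ might use) is again a Markov kernel. The two error probabilities to control, $\Prob_{H_0^{\rm C}}\{\psi(G)=1\}$ and $\Prob_{H_1^{\rm C}}\{\psi(G)=0\}$, are affine functionals of the law of $\tG$ under the respective hypothesis.

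For the null hypothesis I would use the first assertion of \prettyref{prop:reduction}: when $G \sim \calG(n,\gamma)$, the reduction outputs $\tG \sim \Prob_0$ \emph{exactly}, so
\[
\Prob_{H_0^{\rm C}}\{\psi(G)=1\} = \Prob_0\{\phi(G)=1\},
\]
i.e., the Type-I error is transferred with no loss. For the alternative hypothesis I would invoke the second assertion of \prettyref{prop:reduction}: when $G \sim \calG(n,k,1,\gamma)$, $\dTV(P_{\tG|H_1^{\rm C}}, \Prob_1) \le \xi$ with $\xi$ the right-hand side of \prettyref{eq:defxi}. Since $\phi$ (composed with its coins, if any) is a $\{0,1\}$-valued statistic, the variational characterization of total variation yields $|\Prob_{H_1^{\rm C}}\{\psi(G)=0\} - \Prob_1\{\phi(G)=0\}| \le \dTV(P_{\tG|H_1^{\rm C}}, \Prob_1) \le \xi$. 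Adding this to the null-hypothesis identity and using that $\phi$ has Type-I+II error probability $\eta$ on $\PDS(N,K,2q,q)$, i.e.\ $\Prob_0\{\phi(G)=1\} + \Prob_1\{\phi(G)=0\} \le \eta$, gives
\[
\Prob_{H_0^{\rm C}}\{\psi(G)=1\} + \Prob_{H_1^{\rm C}}\{\psi(G)=0\} \le \eta + \xi ,
\]
which is the assertion.

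I do not anticipate a genuine obstacle here: essentially all of the substance is already packed into \prettyref{prop:reduction}, and the statement above is the routine ``transfer'' step from the reduction to a statement about tests. The only two points that warrant a line of care are (i) noting explicitly that the induced test is randomized, which is permitted under \prettyref{hyp:HypothesisPlantedClique}; and (ii) making explicit that the Type-I and Type-II error probabilities cannot increase under the randomized post-processing by $\phi$ --- equivalently, that they are affine in the observed law --- so that the total-variation bound of \prettyref{prop:reduction} passes verbatim to the Type-I+II error of $\psi$.
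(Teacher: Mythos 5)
Your argument is correct and matches the paper's proof step for step: exact matching of the null law gives identity of Type-I errors, the total-variation bound from \prettyref{prop:reduction} gives $|\Prob_{H_1^{\rm C}}\{\phi(\tG)=0\}-\Prob_1\{\phi(G')=0\}|\le\xi$, and summing yields $\eta+\xi$. The extra remarks about the induced test being randomized and the post-processing being a Markov kernel are fine but not needed beyond what the paper already records.
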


The following theorem establishes the computational limit of the $\PDS$ problem as shown in \prettyref{fig:phase}.
\begin{theorem}
    Assume \prettyref{hyp:HypothesisPlantedClique} holds for a fixed $0<\gamma \le 1/2$.  Let $m_0=\lfloor \log_2 (1/\gamma) \rfloor$.
  %  Fix $0<\delta<1/2$.
%     and $ \alpha, \beta \in [0,1]$ such that
Let $ \alpha>0$ and $0<\beta<1$ be such that
  %  \begin{align}
%    \alpha<\beta< \sup_{\delta>0} \min \left \{  \frac{2+m_0\delta }{4+2\delta} \alpha, \; \frac{1}{2}-\delta + \frac{1+2\delta}{4+2\delta} \alpha \right\}.   \label{eq:HardRegimeExpression}
%    \end{align}
      \begin{align}
    \alpha<\beta<  \frac{1}{2} + \frac{m_0\alpha+4}{4m_0\alpha+4} \alpha - \frac{2}{m_0 \alpha}.   \label{eq:HardRegimeExpression}
    \end{align}
%    Let $\ell \in \naturals$ and $q_\ell=\ell^{-(2+\delta)}$.
    Then there exists a sequence $\{(N_\ell,K_\ell,q_\ell)\}_{\ell \in \naturals}$ satisfying
    \begin{align*}
    \lim_{\ell \to \infty} \frac{\log \frac{1}{q_\ell}  }{ \log N_\ell} =\alpha, \quad \lim_{\ell \to \infty}  \frac{\log K_\ell}{ \log N_\ell}= \beta
    \end{align*}
    such that for any sequence of randomized polynomial-time tests $\phi_{\ell}: \{0,1\}^{\binom{N_\ell}{2} } \to \{0,1\}$ for the $\PDS(N_\ell,K_\ell,2q_\ell,q_\ell)$ problem, the Type-I+II error probability is lower bounded by
    \begin{align*}
\liminf_{\ell \to \infty} \Prob_0\{ \phi_\ell(G')=1\}+ \Prob_1 \{ \phi_\ell(G')=0 \} \geq 1,
%\label{eq:main-risk}
\end{align*}
where $G' \sim \calG(N,q)$ under $H_0$ and $G' \sim \calG(N,K,p,q)$ under $H_1$.
%\nb{here write $A'$ is really weird...}
Consequently, if \prettyref{hyp:HypothesisPlantedClique} holds for all $0<\gamma \le 1/2$, then the above holds for all $ \alpha>0$ and $0<\beta<1$ such that
\begin{align}
    \alpha<\beta<\beta^{\sharp} (\alpha) \triangleq \frac{1}{2} + \frac{\alpha}{4}. \label{eq:computationallimit}
    \end{align}
\label{thm:main}
\end{theorem}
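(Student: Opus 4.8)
\textbf{Proof plan for Theorem~\ref{thm:main}.}
The plan is to run the randomized reduction of \prettyref{prop:reduction} (equivalently, invoke \prettyref{prop:test}) with a carefully chosen family of parameters, and to show that (i) all the hypotheses of \prettyref{prop:reduction} are met, (ii) the error term $\xi$ in \prettyref{eq:defxi} tends to $0$, and (iii) the induced \PC instance lies below the $\sqrt{n}$ clique-size threshold, so that \prettyref{hyp:HypothesisPlantedClique} applies and forces the Type-I+II error of $G\mapsto\phi_\ell(\tG)$ to $1$; since $\xi\to0$, the same conclusion then transfers back to $\phi_\ell$ on the \PDS instance. Concretely, I would set $n=n_\ell$ to grow polynomially in $\ell$ according to a free exponent $a$, i.e. $\ell=\Theta(n^{1/a})$ up to polylog factors (so $N=n\ell=\Theta(n^{1+1/a})$), take $k=k_\ell=\Theta(n^{1/2-\epsilon})$ just under the conjectured threshold, take $q=q_\ell$ as large as \prettyref{prop:reduction} permits, namely $q=\Theta(1/(\ell^2\log^{?}))$-ish so that $16q\ell^2\le1$ (in fact one wants $8q\ell^2=n^{-\delta}$ for a small $\delta>0$ to kill the $k^2(8q\ell^2)^{m_0+1}$ term, which is where the $m_0$-dependence and hence the gap between \prettyref{eq:HardRegimeExpression} and \prettyref{eq:computationallimit} enters), and then read off $K=k\ell$, $N=n\ell$, and compute the resulting exponents $\alpha=\lim\log(1/q_\ell)/\log N_\ell$ and $\beta=\lim\log K_\ell/\log N_\ell$.

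The main computational step is the bookkeeping that turns the four constraints — $16q\ell^2\le1$, $k\ge6e\ell$, $k=o(\sqrt n)$, and $\xi\to0$ (which forces $q\ell^2\to0$ and $k^2(8q\ell^2)^{m_0+1}\to0$, i.e. roughly $q\ell^2\lesssim k^{-2/(m_0+1)}$) — into the admissible region for $(\alpha,\beta)$. I would parametrize by the two free scaling exponents (say $a$ for $\ell$ vs.\ $n$, and the slack in $q\ell^2$), express $\alpha$ and $\beta$ as explicit functions of them, and then optimize: for each target $\alpha$, the constraints $k\ge6e\ell$ and $k=n^{1/2-o(1)}$ pin down how large $\ell$ (hence how small $q$, hence $\alpha$) can be relative to $K=k\ell=n^{1/2+1/a-o(1)}$ and $N=n^{1+1/a}$, giving $\beta\to(1/2+1/a)/(1+1/a)$ and $\alpha$ controlled by the $q\ell^2$ budget. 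Pushing these to the boundary yields exactly the right-hand side of \prettyref{eq:HardRegimeExpression}; taking $\gamma$ (hence $m_0$) to be an arbitrarily large constant and sending $m_0\to\infty$ collapses the correction terms $\frac{m_0\alpha+4}{4m_0\alpha+4}\alpha-\frac{2}{m_0\alpha}$ to $\frac{\alpha}{4}$, which gives the clean statement \prettyref{eq:computationallimit} under the stronger hypothesis. The lower constraint $\beta>\alpha$ simply reflects $k\ge6e\ell$.

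The transfer of the error bound is routine given \prettyref{prop:test}: if $\phi_\ell$ had Type-I+II error $\le1-c$ for the \PDS problem along a subsequence, then $G\mapsto\phi_\ell(\tG)$ would solve $\PC(n,k,\gamma)$ with error $\le1-c+\xi\le1-c/2$ eventually, contradicting \prettyref{hyp:HypothesisPlantedClique} because $\limsup\log k_n/\log n\le1/2-\epsilon<1/2$ and $\phi_\ell(\tG)$ is a randomized polynomial-time test (the reduction is polynomial-time: assigning parents, sampling the $\binom{\ell_s\ell_t}{E}$-many edge configurations, and sampling from $P'$, $Q'$, $Q$ are all poly$(N)$ once one checks $P',Q'$ are valid distributions, which \prettyref{prop:reduction} guarantees under $16q\ell^2\le1$). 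One has to be a little careful that $\gamma$ must be held \emph{fixed} while $\ell\to\infty$ — so $m_0$ is a constant throughout the first display, and only in the ``Consequently'' clause does one additionally let $\gamma\downarrow0$ — and that the claimed limits for $\log(1/q_\ell)/\log N_\ell$ and $\log K_\ell/\log N_\ell$ hold as genuine limits, which requires choosing $n_\ell$, $\ell$, $q_\ell$ as clean closed-form functions of $\ell$ (e.g. powers of $\ell$ times fixed polylog corrections) rather than just asymptotically. I expect the only real obstacle to be the constrained optimization producing \prettyref{eq:HardRegimeExpression}; everything else is assembly.
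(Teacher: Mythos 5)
Your plan is essentially the paper's proof: parametrize $n_\ell, k_\ell, q_\ell$ as powers of $\ell$ with a small slack exponent $\delta>0$ so that $q_\ell\ell^2=\ell^{-\delta}$ kills the $k^2(8q\ell^2)^{m_0+1}$ term, check $16q\ell^2\le1$ and $k\ge6e\ell$ (the source of $\beta>\alpha$), invoke \prettyref{prop:test} for the contradiction with \prettyref{hyp:HypothesisPlantedClique} when $\limsup\log k_\ell/\log n_\ell<1/2$, and send $\gamma\downarrow0$ only in the final clause. The paper carries out the bookkeeping you defer by rewriting \prettyref{eq:HardRegimeExpression} in the equivalent form $\alpha<\beta<\min\{\frac{2+m_0\delta}{4+2\delta}\alpha,\;\frac12-\delta+\frac{1+2\delta}{4+2\delta}\alpha\}$ for some $\delta>0$, then setting $q_\ell=\ell^{-(2+\delta)}$, $n_\ell=\lfloor\ell^{(2+\delta)/\alpha-1}\rfloor$, $k_\ell=\lfloor\ell^{(2+\delta)\beta/\alpha-1}\rfloor$; otherwise the two arguments coincide.
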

\begin{remark}
 Consider the asymptotic regime given by \prettyref{eq:scaling}. The function $\beta^\sharp$ in \prettyref{eq:computationallimit} gives the computational barrier for the $\PDS(N,K,p,q)$ problem (see \prettyref{fig:phase}). Compared to the statistical limit $\beta^*$ given in \prettyref{eq:detectionlimit}, we note that $\beta^*(\alpha)<\beta^\sharp(\alpha)$ if and only if $\alpha < \frac{2}{3}$, in which case computational efficiency incurs a significant penalty on the detection performance.
%In other words, in the \emph{moderately sparse} graph regime with $\alpha<\frac{2}{3}$, the computational intensive scan test is able to detect far smaller dense subgraphs than any computationally efficient test. On the other hand, in the \emph{very sparse} graph regime with $\alpha>2/3$, $\beta^{\sharp} $ and $\beta^\ast$ coincide, and the linear test statistic achieves the statistical limit in linear time.
Interestingly, this phenomenon is in line with the observation reported in \cite{MaWu13} for the noisy submatrix detection problem, where the statistical limit can be attained if and only if the submatrix size exceeds the $(2/3)^{\Th}$ power of the matrix size.
%  Assume the \PC Hypothesis holds for any positive $\gamma \in (0,1)$. Then $m_0$ in \prettyref{cor:compuationalowerbound} can be chosen to be arbitrarily large and thus we conclude that there is no efficiently computable solver if
%\begin{align}
%\alpha<\beta<\beta^{\sharp} (\alpha) =\frac{1}{2} + \frac{\alpha}{4}. \label{eq:computationallimit}
%\end{align}
%Therefore, $\beta^{\sharp} (\alpha)$ characterizes the computational limit.
\end{remark}

\section{Extensions and Open Problems}	\label{sec:extend}
In this section, we discuss the extension of our results to:
(1) the planted dense subgraph recovery and \DKS problem;
(2) the \PDS problem where the planted dense subgraph has a deterministic size.
(3) the bipartite \PDS problem;

\subsection{Recovering Planted Dense Subgraphs and \DKS Problem} \label{sec:recovery}
Closely related to the \PDS detection problem is the recovery problem, where given a graph generated from $\calG(N,K,p,q)$, the task is to recover the planted dense subgraph. As a consequence of our computational lower bound for detection, we discuss implications on the tractability of the recovery problem as well as the closely related $\DKS$ problem as illustrated in \prettyref{fig:planteddensesubgraph}.

Consider the asymptotic regime of \prettyref{eq:scaling}, where it has been shown \cite{ChenXu14,ames2013robust} that recovery is possible if and only if $\beta>\alpha$ and $\alpha <1$. Note that in this case the recovery problem is harder than finding the $\DKS$, because if the planted dense subgraph is recovered with high probability, we can obtain a $(1+\epsilon)$-approximation of the densest $K$-subgraph for any $\epsilon>0$ in polynomial time.\footnote{If the planted dense subgraph size is smaller than $K$, output any $K$-subgraph containing it; otherwise output any of its $K$-subgraph.} Results in \cite{ChenXu14,ames2013robust} imply that the planted dense subgraph can be recovered in polynomial time in the simple (green) regime of \prettyref{fig:planteddensesubgraph} where $ \beta> \frac{1}{2}+\frac{\alpha}{2}$. Consequently $(1+\epsilon)$-approximation of the \DKS can be found efficiently in this regime.

Conversely, given a polynomial time $\eta$-factor approximation algorithm to the \DKS problem with the output $\widehat{S}$, we can distinguish $H_0: G \sim \calG(N,q)$ versus $H_1: G \sim \calG(N,K,p=cq,q)$ if $\beta>\alpha$ and $c>\eta$ in polynomial time as follows: Fix any positive $\epsilon>0$ such that $(1-\epsilon)c>(1+\epsilon)\eta$. Declare $H_1$ if the density of $\widehat{S}$ is larger than $(1+\epsilon)q$ and $H_0$ otherwise. Assuming $\beta>\alpha$, one can show that the density of $\widehat{S}$ is at most $(1+\epsilon) q$ under $H_0$ and at least $(1-\epsilon)p/\eta$ under $H_1$. Hence, our computational lower bounds for the \PC problem imply that the densest $K$-subgraph as well as the planted dense subgraph is hard to approximate to any constant factor if $\alpha<\beta < \beta^{\sharp}(\alpha)$ (the red regime in \prettyref{fig:phase}). Whether $\DKS$ is hard to approximate with any constant factor in the blue regime of $ \beta^{\sharp}(\alpha) \vee \alpha \le \beta \le \frac{1}{2}+\frac{\alpha}{2}$ is left as an interesting open problem.

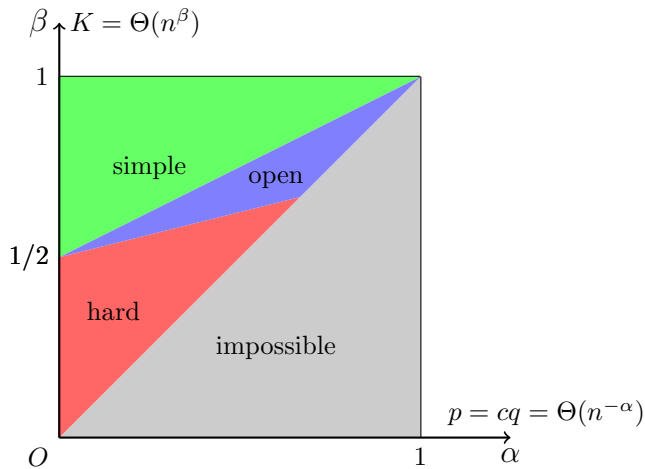
\begin{figure}[h!]
     \begin{center}
\scalebox{1}{
\begin{tikzpicture}[scale = 2.4, font = \small, thick]
\draw (2, 0) -- (2, 2);
\draw (0, 2) -- (2, 2);
\node [left] at (0, 2) {$1$};
\node [below] at (2, 0) {$1$};
\node [above] at (2.7,0) {$p=cq=\Theta(n^{-\alpha})$};
\node [right] at (0,2.3) {$K=\Theta(n^{\beta})$};
\node [left] at (0, 1) {$1/2$};
\draw[green] (0,1)--(0,2);
\draw[red] (0,0)--(0,1);
\path[fill = black!20] (0, 0) -- (2, 2) -- (2, 0) -- cycle;
\node at (1.2,0.5){impossible};
\path[fill= red!60] (0,0) --(4/3,4/3) -- (0,1)-- cycle;
\path[fill=green!60] (0,1)--(2,2) -- (0,2)-- cycle;
\path[fill= blue!50] (0,1) --(2,2) -- (4/3,4/3)-- cycle;
\node at (0.5,1.5){simple};
\node [left] at (0, 1) {$1/2$};
%\draw[thin, dashed] (0, 1) -- (4/3, 4/3);
%\node [right] at (2,1.5){$3/4$};
\node at (0.3,0.7) {hard};
\node at (1.2,1.43) {open};
\draw[->] (0, 0) node [below left] {$O$}-- (2.5, 0) node [below]{\large $\alpha$};
\draw[->] (0, 0) -- (0, 2.3) node [left]{\large $\beta$};
\end{tikzpicture}}
\end{center}
\caption{The simple (green), hard (red), impossible (gray) regimes for \textbf{recovering} planted dense subgraphs, and the hardness in the blue regime remains open.}
\label{fig:planteddensesubgraph}
\end{figure}

\subsection{\PDS Problem with a Deterministic Size} \label{sec:fixedsize}
In the \PDS problem with a deterministic size $K$, the null distribution corresponds to the Erd\H{o}s-R\'enyi graph $G(N,q)$; under the alternative, we choose $K$ vertices uniformly at random to plant a dense subgraph with edge probability $p$. Although the subgraph size under our \PDS model is binomially distributed, which, in the asymptotic regime \prettyref{eq:scaling}, is sharply concentrated near its mean $K$, it is not entirely clear whether these two models are equivalent.
%So far our reduction scheme in \prettyref{sec:computationallimits} does not extend to the fixed-size model; the main technical hurdle is in controlling the total variation distance under the alternative due to high dependency introduced by the diagonal blocks of the adjacency matrix $\tA$.
Although our reduction scheme in \prettyref{sec:computationallimits} extends to the fixed-size model with $V_1^n$ being the random $\ell$-partition of $[N]$ with $|V_t|=\ell$ for all $t \in [n]$, so far we have not been able to prove the alternative distributions are approximately matched:
 The main technical hurdle lies in controlling the total variation between the distribution of $\{E(V_t,V_t), t \in [n] \}$ after averaging over the
 random $\ell$-partition $\{V_t\}$ and the desired distribution.

%Nonetheless, if one restrict oneself to tests that are monotonically increasing (with respect to subgraph ordering), \ie, $\phi(G) \geq \phi(G')$ if $G'$ can be obtained from $G$ by adding edges.
% We say a test $\phi: \{0,1\}^{N \times N} \to \{0,1\}$ is {\it monotone}, if $\phi (A) \ge \phi(A')$ when $A_{ij}' \ge A_{ij}$ for all $i,j \in [N]$.

Nonetheless, our result on the hardness of solving the PDS problem extends to the case of
deterministic dense subgraph size if the tests are required to be monotone. (A test
$\phi$ is monotone if $\phi(G)=1$ implies $\phi(G')=1$ whenever $G'$ is obtained by adding edges to $G$.)
It is intuitive to assume that any reasonable test should be more likely to declare the existence of the planted dense subgraph if the graph contains more edges, such as the linear and scan test defined in \prettyref{eq:tests}.
%accept the alternative hypothesis with more edges
%and thus be monotone.
Moreover, by the monotonicity of the likelihood ratio, the statistically optimal test is also monotone.
If we restrict our scope to monotone tests, then
 our computational lower bound implies that for the \PDS problem with a deterministic size, there is no efficiently computable monotone test in the hard regime of $\alpha<\beta<\beta^{\sharp}$ in \prettyref{fig:phase}. In fact, for a given monotone polynomial-time solver $\phi$ for the \PDS problem with size $K$, the $\PDS(N,2K,p,q)$ can be solved by $\phi$ in polynomial time because with high probability the planted dense subgraph is of size at least $K$.
  It is an interesting open problem to prove the computational lower bounds without restricting to monotone tests, or prove the optimal polynomial-time tests are monotone. We conjecture that the computational limit of $\PDS$ of fixed size is identical to that of the random size, which can indeed by established in the bipartite case as discussed in the next subsection.

Finally, we can show that  the \PDS \emph{recovery} problem with a deterministic planted dense subgraph size $K$ is
 computationally intractable if $\alpha<\beta < \beta^{\sharp}(\alpha)$ (the red regime in \prettyref{fig:phase}). This follows from the fact that given a polynomial-time algorithm for the \PDS recovery problem with size $K$,
 we can construct a polynomial-time solver for $\PDS(N,K, p=cq, q)$ if $\alpha<\beta$ (See Appendix \ref{sec:PDSRecovery} for a formal statement and the proof).

\subsection{Bipartite \PDS Problem} \label{sec:bipartite}
Let $\mathcal{G}_{\rm b}(N,q)$ denote the bipartite Erd\H{o}s-R\'enyi random graph model with $N$ top vertices and $N$ bottom vertices.
Let $\mathcal{G}_{\rm b}(N,K,p,q)$ denote the bipartite variant of the planted densest subgraph model in \prettyref{def:HypTesting} with a planted dense subgraph of $K$ top vertices and $K$ bottom vertices on average. The bipartite \PDS problem with parameters $(N,K,p,q)$, denoted by $\BPDS(N,K,p,q)$, refers to the problem of testing $H_0: G \sim \mathcal{G}_{\rm b}(N,q)$ versus $H_1: G \sim \mathcal{G}_{\rm b} (N,K,p,q)$.

%\nb{Jiaming, this $\calG(N,N,K,K,p,q)$ and etc look like typos... Maybe it's better to use another notation like $\calG_{\rm b}$ to denote the bipartite ensemble?}

 Consider the asymptotic regime of \prettyref{eq:scaling}. Following the arguments in \prettyref{sec:statisticallimits}, one can show that the statistical limit is given by $\beta^\ast$ defined in \prettyref{eq:detectionlimit}.
To derive computational lower bounds, we use the reduction from the bipartite \PC problem with parameters $(n,k,\gamma)$, denoted by $\BPC(n,k,\gamma)$, which tests $H_0: G \sim \mathcal{G}_{\rm b}(n,\gamma)$ versus $H_1: G \sim \mathcal{G}_{\rm b} (n,k,\gamma)$, where $\mathcal{G}_{\rm b} (n,k,\gamma)$ is the bipartite variant of the planted clique model with a planted  bi-clique of size $k \times k$. The \BPC Hypothesis refers to the assumption that for some constant $0<\gamma \le 1/2$, no sequence of randomized polynomial-time tests for \BPC succeeds if $\limsup_{n \to \infty} \frac{\log k_n }{ \log n} < 1/2$. The reduction scheme from $\BPC(n,k,\gamma)$ to $\BPDS(N,K,2q,q)$  is analogue to the scheme used in non-bipartite case.
%\begin{enumerate}
%\item (Graph Inflation): Given a bipartite graph $G$ with $n$ top vertices indexed by $[n]$, $n$ bottom vertices indexed by $[n]$ and the adjacency matrix given by $A$, we define an enlarged bipartite graph $\widetilde{G}$ with $N$ top vertices indexed by $[N]$, $N$ bottom vertices indexed by $[N]$ as follows: Let $U_1^n$ (resp.\ $V_1^n$) denote the sets of vertices in bin $t \in [n]$ by throwing $N$ top (resp.\ bottom) vertices into $n$ bins. For all $(s,t) \in [n]^2$, we generate the number of edges between $U_s$ and $V_t$, denoted by
% $E(U_s,V_t)$, independently as \prettyref{eq:edgedist}.
%% \begin{align*}
%%    E(U_s, V_t) \sim \left\{
%%    \begin{array}{rl}
%%    P'_{\ell_s \ell_t} & \text{if } A_{st}=1, \ell_s, \ell_t \le 2\ell\\
%%    Q'_{\ell_s \ell_t} & \text{if } A_{st}=0, \ell_s, \ell_t \le 2 \ell \\
%%    Q_{\ell_s \ell_t} & \text{if } \max\{ \ell_s, \ell_t \}> 2 \ell.
%%    \end{array} \right.
%%    \end{align*}
% Then independently of everything else, we connect top vertices in $U_s$ and bottom vertices in $V_t$ uniformly at random such that the total number of edges is $E(U_s,V_t)$.
%\end{enumerate}
The proof of computational lower bounds in bipartite graph is much simpler. In particular, under the null hypothesis, $G \sim \mathcal{G}_{\rm b}(n, \gamma)$ and one can verify that $\widetilde{G} \sim \mathcal{G}_{\rm b}(N,q)$. Under the alternative hypothesis, $G \sim \mathcal{G}_{\rm b}(n,k,\gamma)$. \prettyref{lmm:TotalVariationBound} directly implies that the total variation distance between the distribution of $\tG$ and $\mathcal{G}_{\rm b} (N,K,2q,q)$ is on the order of $k^2 (q \ell^2)^{(m_0+1)}$. Then, following the arguments in \prettyref{prop:test} and \prettyref{thm:main}, we conclude that if the \BPC Hypothesis holds for any positive $\gamma$, then no efficiently computable test can solve $\BPDS(N,K,2q,q)$ in the regime $\alpha<\beta< \beta^{\sharp} (\alpha)$ given by \prettyref{eq:computationallimit}. The same conclusion also carries over to the bipartite \PDS problem with a deterministic size $K$ and the statistical and computational limits shown in \prettyref{fig:phase} apply verbatim.
%, our approach and results can be readily extended, and it can be shown that there is no efficiently computable test in the regime $\alpha<\beta<\beta^{\sharp}$.

\bibliographystyle{abbrv}
\bibliography{strings,refs,ballsandbins,planted}

\appendix

\section{Proofs} \label{sec:pf}
\subsection{Proof of \prettyref{prop:lowerbound}}
\label{sec:pf-lb}
\begin{proof}
Let $\Prob_{A | |S|}$ denote the distribution of $A$ conditional on $|S|$ under the alternative hypothesis. Since $|S| \sim \Binom(N,K/N)$, by the
Chernoff bound, $\Prob[|S| >2K] \le \exp(-K/8) $. Therefore,
\begin{align}
d_{\rm TV} (\Prob_0, \Prob_1 ) &= d_{\rm TV} (\Prob_0, \mathbb{E}_{|S|} [\Prob_{A | |S|}]  )  \nonumber \\
& \le \mathbb{E}_{|S|} \left[ d_{\rm TV} (\Prob_0, \Prob_{A | |S|} )  \right] \nonumber \\
& \le \exp(-K/8) + \sum_{ K'\le 2K} d_{\rm TV} (\Prob_0, \Prob_{A | |S|=K'} ) \Prob [|S|=K'], \label{eq:dtv1}
\end{align}
where the first inequality follows from the convexity of $(P,Q) \mapsto d_{\rm TV} (P,Q)$,
%Consider a channel which takes a graph $x$ as input and generates the output $y$ by deleting each edge independently at random with probability $1-p$. Suppose the input $X \sim G(N,K,1,1/2)$, then the output $Y \sim G(N,K,p,q)$.
Next we condition on $|S|=K'$ for a fixed $K' \le 2K$. Then $S$ is uniformly distributed over all subsets of size $K'$. Let $\tS $ be an independent copy of $S$. % conditioned on $|S|=K'$.
Then $|S \cap \tS| \sim \mathrm{Hypergeometric}(N,K',K')$. By the definition of the $\chi^2$-divergence and Fubini's theorem,
\begin{align*}
\chi^2(\Prob_{A | |S|=K'} \| \Prob_0 )
 &= \int \frac{ \Expect_S[P_{A | S}] \Expect_{\tS}[P_{A | \widetilde{S}}] }{\Prob_0}  -1 \\
 &= \mathbb{E}_{ S \Perp \widetilde{S} } \left[ \int \frac{ P_{A | S} P_{A | \widetilde{S}} }{\Prob_0} \right] -1 \\
&= \mathbb{E}_{ S \Perp \widetilde{S} } \left[  \left( 1+ \frac{(p-q)^2}{q(1-q)} \right)^{\binom{|S \cap \widetilde{S} |}{2} } \right] -1 \\
& \le \mathbb{E}_{ S \Perp \widetilde{S} } \left[  \exp \left ( \frac{(c-1)^2 q}{1-q}\binom{|S \cap \widetilde{S} |}{2} \right) \right] -1 \\
& \overset{(a)}{\le} \mathbb{E} \left[  \exp \left( (c-1)c q |S \cap \widetilde{S} |^2 \right) \right] -1 \\
& \overset{(b)} {\le} \tau (Cc^2) -1,
\end{align*}
where $(a)$ is due to the fact that $q = \frac{p}{c} \leq \frac{1}{c}$; $(b)$ follows from \prettyref{lmm:H} in \prettyref{app:H}
with an appropriate choice of function $\tau:\reals_+ \to \reals_+$ satisfying $\tau(0+)=1$. Therefore, we get that
\begin{align}
2 d^2_{\rm TV} (\Prob_0, \Prob_{A | |S|=K'} ) \le  \log ( \chi^2( \Prob_{A | |S|=K'} \| \Prob_0 )  +1 ) \le
\log ( \tau(C c^2) ) , \label{eq:dtv2}
\end{align}
Combining \prettyref{eq:dtv1} and \prettyref{eq:dtv2} yields \prettyref{eq:lb-tv} with $h \triangleq \log \circ \tau$.
%and thus $d_{\rm TV} (\Prob_0, \Prob_1 ) \le \tau (Cc^2)+ \exp(-K/8)$ for some function $\tau:\reals_+ \to \reals_+$ satisfying $\tau(0+)=0$.
\end{proof}

\subsection{Proof of \prettyref{prop:upperbound}}
\label{sec:pf-ub}
\begin{proof}
%\nb{jiaming, either add $i<j$ into the defintion of Tlin and Tscan, or modify the proof. Otherwise, the distribution is twice binomial.}
Let $C>0$ denote a constant whose value only depends on $c$ and may change line by line.
Under $\mathbb{P}_0$, $T_{\rm lin} \sim \Binom \left(\binom{N}{2}, q \right)$. By the Bernstein inequality,
\begin{align*}
\mathbb{P}_0 [ T_{\rm lin} > \tau_1] \le \exp \left( - \frac{ \binom{K}{2}^2 (p-q)^2 /4 }{ 2 \binom{N}{2}q + \binom{K}{2} (p-q)/3 } \right) \le \exp \left( - C \frac{K^4q}{N^2}\right).
\end{align*}
Under $\mathbb{P}_1$,  Since $|S| \sim \Binom(N,K/N)$, by the Chernoff bound, $\Prob_1[|S| <0.9K] \le \exp(-K/200)$.
Conditional on $|S|=K'$ for some $K' \ge 0.9K$, then $T_{\rm lin}$ is distributed as an independent sum of $\Binom \left(\binom{K'}{2}, p \right)$ and $\Binom\left(\binom{N}{2}-\binom{K'}{2}, q\right)$. By the multiplicative Chernoff bound (see, \eg, \cite[Theorem 4.5]{Mitzenmacher05}),
\begin{align*}
\mathbb{P}_1[ T_{\rm lin} \le \tau_1] & \le \Prob_1[|S| <0.9K] + \exp \left( - \frac{  \left(2 \binom{K'}{2} - \binom{K}{2} \right)^2 (p-q)^2 }{ 8 \left( \binom{N}{2}q + \binom{K'}{2} (p-q) \right)  } \right) \\
& \le \exp \left(-\frac{K}{200} \right) + \exp \left( -C \frac{K^4q}{N^2}\right).
\end{align*}
For the scan test statistic, under the null hypothesis, for any fixed subset $S$ of size $K$, $\sum_{i,j \in S} A_{ij} \sim \Binom\left(\binom{K}{2}, q\right) $.
By the union bound and the Bernstein inequality,
\begin{align*}
\mathbb{P}_0 [ T_{\rm scan} > \tau_2] \le \binom{N}{K} \mathbb{P}_0 [ \sum_{1 \le i<j \le K} A_{ij} > \tau_2 ] & \le \left(\frac{Ne}{K}\right)^K \exp \left(  - \frac{\binom{K}{2}^2 (p-q)^2 /4}{ 2 \binom{K}{2}q + \binom{K}{2} (p-q)/3 } \right) \\
&\le  \exp \left( K \log \frac{Ne}{K} - C K^2 q \right).
\end{align*}
Under the alternative hypothesis, conditional on $|S|=K'$ for some $K' \ge 0.9K$, $ \sum_{i,j \in S} A_{ij} \sim \Binom\left(\binom{K'}{2}, p\right) $ and thus $T_{\rm scan}$ is stochastically dominated by $\Binom\left(\binom{K' \wedge K } {2}, p\right)$. By the multiplicative Chernoff bound,
\begin{align*}
\mathbb{P}_1 [ T_{\rm scan} \le \tau_2] &\le
\Prob_1[|S| <0.9K] + \exp \left(- \frac{ \left( 2 \binom{K' \wedge K}{2} - \binom{K}{2} \right)^2 (p-q)^2}{8 \binom{K'\wedge K}{2}p } \right)  \\
&\le  \exp \left(-\frac{K}{200} \right) + \exp \left( - C K^2 q \right).
\end{align*}
\end{proof}

\subsection{Proof of \prettyref{prop:reduction}}
\label{sec:pf-reduction}
We first introduce several key auxiliary results used in the proof. The following lemma ensures that $P'_{\ell_s \ell_t}$ and $Q'_{\ell_s \ell_t}$ are well-defined under suitable conditions and that $P'_{\ell_s \ell_t}$ and $P_{\ell_s,\ell_t}$ are close in total variation.
\begin{lemma}\label{lmm:TotalVariationBound}
Suppose that $p=2q$ and $16 q \ell^2 \le 1$. Fix $\{\ell_t\}$ such that $\ell_t \le 2 \ell$ for all $t \in [k]$.
Then for all $1 \le s <t \le k$,
$P'_{\ell_s \ell_t}$ and $Q'_{\ell_s \ell_t}$ are probability measures and
\begin{align*}
d_{\rm TV} ( P'_{\ell_s \ell_t}, P_{\ell_s\ell_t}) \le  4 (8q \ell^2)^{(m_0+1)}.
\end{align*}
\end{lemma}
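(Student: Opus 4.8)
The plan is to fix a pair $1\le s<t\le k$ and work with the abbreviation $L\triangleq\ell_s\ell_t$; under the hypotheses $\ell_s,\ell_t\le 2\ell$ and $16q\ell^2\le1$ this gives $L\le 4\ell^2$, hence $Lp=2qL\le 8q\ell^2\le\tfrac12$ and $Lq\le 4q\ell^2\le\tfrac14$. I would first record two elementary consequences of $m_0=\lfloor\log_2(1/\gamma)\rfloor$ used repeatedly: $\gamma\le 2^{-m_0}$, $\gamma^{-1}<2^{m_0+1}$, and — because $\gamma\le\tfrac12$ — $m_0\ge1$. The analytic core is a pair of binomial tail estimates from the union bound: for any $r\in[0,1]$, $\prob{\Binom(L,r)\ge m_0+1}\le\binom{L}{m_0+1}r^{m_0+1}\le(Lr)^{m_0+1}$. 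Applied to $r=p$ this bounds $\sum_{m>m_0}P_{\ell_s\ell_t}(m)$ by $\delta\triangleq(8q\ell^2)^{m_0+1}$, and applied to $r=q$ (then multiplied by $\gamma^{-1}<2^{m_0+1}$) it bounds $\gamma^{-1}\sum_{m>m_0}Q_{\ell_s\ell_t}(m)$ by $\delta$ as well. Since $a_{\ell_s\ell_t}$ is the difference of these two nonnegative sums, $|a_{\ell_s\ell_t}|\le\delta$; and $8q\ell^2\le\tfrac12$ together with $m_0\ge1$ gives $\delta\le\tfrac14$.

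With these estimates in hand the remainder is bookkeeping. For the total variation bound, $P'_{\ell_s\ell_t}$ differs from $P_{\ell_s\ell_t}$ only at $m=0$ (by $a_{\ell_s\ell_t}$) and at the atoms $m>m_0$ (by $\gamma^{-1}Q_{\ell_s\ell_t}(m)-P_{\ell_s\ell_t}(m)$), so $d_{\rm TV}(P'_{\ell_s\ell_t},P_{\ell_s\ell_t})\le\tfrac12\bigl(|a_{\ell_s\ell_t}|+\sum_{m>m_0}P_{\ell_s\ell_t}(m)+\gamma^{-1}\sum_{m>m_0}Q_{\ell_s\ell_t}(m)\bigr)\le\tfrac32\delta\le 4(8q\ell^2)^{m_0+1}$. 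For well-definedness of $P'_{\ell_s\ell_t}$: the three pieces of its definition recombine (the $\gamma^{-1}Q$ terms cancel against those in $a_{\ell_s\ell_t}$) to total mass $\sum_m P_{\ell_s\ell_t}(m)=1$, and the only atom that could be negative is $P'_{\ell_s\ell_t}(0)=P_{\ell_s\ell_t}(0)+a_{\ell_s\ell_t}\ge(1-2q)^L-\delta\ge\tfrac12-\tfrac14>0$, using $(1-2q)^L\ge 1-2qL\ge\tfrac12$. For $Q'_{\ell_s\ell_t}$: the identity $(1-\gamma)Q'_{\ell_s\ell_t}=Q_{\ell_s\ell_t}-\gamma P'_{\ell_s\ell_t}$ with $\sum_m P'_{\ell_s\ell_t}(m)=1$ forces total mass $1$; nonnegativity is immediate for $m>m_0$ (the atom is $0$), follows for $1\le m\le m_0$ from $Q_{\ell_s\ell_t}(m)/P_{\ell_s\ell_t}(m)=2^{-m}\bigl(\tfrac{1-q}{1-2q}\bigr)^{L-m}\ge 2^{-m}\ge 2^{-m_0}\ge\gamma$, and for $m=0$ from $Q_{\ell_s\ell_t}(0)-\gamma P'_{\ell_s\ell_t}(0)\ge(1-\gamma)Q_{\ell_s\ell_t}(0)-\gamma|a_{\ell_s\ell_t}|\ge\tfrac12\cdot\tfrac34-\tfrac12\cdot\tfrac14>0$, using $(1-q)^L\ge 1-qL\ge\tfrac34$.

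I do not expect a genuine obstacle; the argument is entirely elementary. The one place that needs care is the constant bookkeeping in the two $m=0$ nonnegativity checks, where one must ensure $|a_{\ell_s\ell_t}|$ is strictly smaller than $P_{\ell_s\ell_t}(0)$ and than $(1-\gamma)Q_{\ell_s\ell_t}(0)$. This is precisely where the hypothesis $\gamma\le\tfrac12$ (which forces $m_0\ge1$ and hence $\delta\le\tfrac14$), rather than merely $\gamma<1$, is used.
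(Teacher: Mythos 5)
Your argument is correct and follows essentially the same route as the paper's proof: both show total mass one by construction, reduce nonnegativity of $P'_{\ell_s\ell_t}$ to the atom at $m=0$ via a bound on $|a_{\ell_s\ell_t}|$, reduce nonnegativity of $Q'_{\ell_s\ell_t}$ to the verification $Q_{\ell_s\ell_t}(m)\ge\gamma P'_{\ell_s\ell_t}(m)$ on $0\le m\le m_0$ (with $Q'$ vanishing for $m>m_0$), and bound the TV distance by the two tail masses at $m>m_0$. The only substantive difference is cosmetic: you bound the tails via the union-bound inequality $\prob{\Binom(L,r)\ge m_0+1}\le(Lr)^{m_0+1}$, yielding $\frac{3}{2}(8q\ell^2)^{m_0+1}$, whereas the paper drops the $(1-r)^{L-m}$ factor and sums a geometric series $\sum_{m>m_0}(Lr)^m\le 2(Lr)^{m_0+1}$, yielding the stated $4(8q\ell^2)^{m_0+1}$; both are elementary and land within the claimed constant. (Also, for the $m=0$ check on $Q'$ the paper uses the shorter observation $\gamma P'_{\ell_s\ell_t}(0)\le\gamma\le\frac12\le Q_{\ell_s\ell_t}(0)$, exploiting $P'_{\ell_s\ell_t}(0)\le1$; your version with $(1-\gamma)Q_{\ell_s\ell_t}(0)-\gamma|a_{\ell_s\ell_t}|$ is correct but slightly longer.)
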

\begin{proof}
Fix an $(s,t)$ such that $1 \le s<t \le k$. We first show that $P'_{\ell_s \ell_t}$ and $Q'_{\ell_s \ell_t}$ are well-defined. By definition, $\sum_{m=0}^{\ell_s\ell_t} P'_{\ell_s \ell_t} (m) = \sum_{m=0}^{\ell_s\ell_t} Q'_{\ell_s \ell_t} (m) = 1$ and
it suffices to show positivity, \ie,
\begin{align}
&P_{\ell_s\ell_t} (0) + a_{\ell_s \ell_t}  \ge 0, \label{eq:condition1}\\
&Q_{\ell_s\ell_t}(m)  \ge  \gamma P'_{\ell_s \ell_t}(m) , \quad \forall 0 \le m \le m_0. \label{eq:condition2}
\end{align}
Recall that $P_{\ell_s\ell_t} \sim \Binom(\ell_s \ell_t, p)$ and $Q_{\ell_s\ell_t} \sim \Binom(\ell_s \ell_t, q)$. Therefore,
\begin{align*}
Q_{\ell_s\ell_t} (m) = \binom{\ell_s \ell_t }{m} q^m (1-q)^{\ell_s \ell_t-m}, \quad
P_{\ell_s\ell_t} (m) = \binom{\ell_s \ell_t }{m} p^m (1-p)^{\ell_s \ell_t -m}, \; \forall 0 \le m \le \ell_s \ell_t,
\end{align*}
It follows that
\begin{align*}
\frac{1}{\gamma} Q_{\ell_s\ell_t} (m)- P_{\ell_s\ell_t} (m)= \frac{1}{\gamma } \binom{\ell_s \ell_t} {m} q^m (1-2q)^{\ell_s \ell_t -m } \left[ \left( \frac{1-q}{1-2q}\right)^{\ell_s \ell_t -m} - 2^m \gamma  \right].
\end{align*}
Recall that $m_0=\lfloor \log_2 (1/\gamma) \rfloor$ and thus $ Q_{\ell_s\ell_t}(m) \ge  \gamma P_{\ell_s\ell_t} (m)$ for all $m \le m_0$.
Furthermore,
\begin{align*}
Q_{\ell_s\ell_t} (0) =  (1-q)^{\ell_s \ell_t } \ge (1-q \ell_s \ell_t ) \ge 1-4q\ell^2  \ge \frac{3}{4} \ge \gamma \geq \gamma P'_{\ell_s\ell_t} (0) ,
\end{align*}
and thus \prettyref{eq:condition2} holds.
Recall that
\begin{align*}
a_{\ell_s \ell_t}=\sum_{m_0<m \le  \ell_s\ell_t}  \left(P_{\ell_s \ell_t}(m) - \frac{1}{\gamma} Q_{\ell_s \ell_t}(m)  \right)
\end{align*}
Since $2^{m_0+1} \gamma >1$ and $8 q \ell^2 \leq 1/2$, it follows that
\begin{align}
\frac{1}{\gamma} \sum_{m_0<m \le  \ell_s\ell_t}  Q_{\ell_s \ell_t}(m)  \le   \frac{1}{\gamma}  \sum_{m_0<m \le  \ell_s\ell_t} \binom{\ell_s \ell_t }{m} q^m \le   \sum_{m>m_0 } ( 2 \ell_s \ell_t q)^m  \le   2 (8q\ell^2 )^{(m_0+1) }, \label{eq:bound1}
\end{align}
and therefore $a_{\ell_s \ell_t}  \ge -1/2$. Furthermore,
\begin{align*}
P_{\ell_s\ell_t} (0) =  (1-p)^{\ell_s \ell_t } \ge 1-p \ell_s \ell_t  \ge 1-8q\ell^2 \ge 1/2,
\end{align*}
and thus \prettyref{eq:condition1} holds.

Next we bound $d_{\rm TV} \left( P'_{\ell_s \ell_t}, P_{\ell_s\ell_t} \right)$. Notice that
\begin{align}
\sum_{m_0<m \le  \ell_s\ell_t}  P_{\ell_s \ell_t}(m) \le \sum_{m_0<m \le  \ell_s\ell_t}  \binom{\ell_s \ell_t }{m} p^m \le \sum_{m >m_0} (\ell_s \ell_t p)^m \le  2 (8 q\ell^2)^{(m_0+1) }. \label{eq:bound2}
\end{align}
Therefore, by the definition of the total variation distance and $a_{\ell_s\ell_t}$,
\begin{align*}
d_{\rm TV} ( P'_{\ell_s \ell_t}, P_{\ell_s\ell_t} )
= & ~ \frac{1}{2} |a_{\ell_s\ell_t}| + \frac{1}{2} \sum_{m_0<m \le  \ell_s\ell_t} \left|P_{\ell_s \ell_t}(m) - \frac{1}{\gamma}  Q_{\ell_s \ell_t}(m)\right|	 \nonumber \\
\leq & ~ 	\sum_{m_0<m \le  \ell_s\ell_t} \left(P_{\ell_s \ell_t}(m) + \frac{1}{\gamma}  Q_{\ell_s \ell_t}(m)\right) \le 4 (8 q\ell^2)^{(m_0+1) },
% \le   \max \left\{\sum_{m_0<m \le  \ell_s\ell_t}P_{\ell_s \ell_t}(m) , \sum_{m_0<m \le  \ell_s\ell_t} \frac{1}{\gamma}  Q_{\ell_s \ell_t}(m)  \right\} \le 2 (8 q\ell^2)^{(m_0+1) },
\end{align*}
where the last inequality follows from \prettyref{eq:bound1} and \prettyref{eq:bound2}.
\end{proof}

The following lemma is useful for upper bounding the total variation distance between a truncated mixture of product distribution $P_Y$ and a product distribution $Q_Y$.
\begin{lemma}
Let $P_{Y|X}$ be a Markov kernel from $\calX$ to $\calY$ and denote the marginal of $Y$ by $P_Y = \Expect_{X\sim P_X}[P_{Y|X}]$. Let $Q_Y$ be such that $P_{Y|X=x} \ll Q_Y$ for all $x$. Let $E$ be a measurable subset of $\calX$.
Define $g:\calX^2 \to \bar{\reals}_+$ by
\[
g(x,\tx) \triangleq \int \frac{\diff P_{Y|X=x} \diff P_{Y|X=\tx}}{\diff Q}.
\]
Then
\begin{equation}
%\prob{X\not \in E}
\dTV(P_Y,Q_Y) \leq \frac{1}{2} P_X(\comp{E}) + \frac{1}{2} \sqrt{\expect{g(X,\widetilde{X}) \Indc_E(X) \Indc_E(\tX)} - 1 + 2 P_X(\comp{E})},
%\dTV(P_Y,Q_Y) \leq P_X(\comp{E}) + \frac{1}{2} \sqrt{\expect{g(X,\tilde{X}) \Indc_E(X) \Indc_E(\tX)} - 1},
	\label{eq:chi2tr}
\end{equation}
where $\tX$ is an independent copy of $X\sim P_X$.
	\label{lmm:chi2tr}
\end{lemma}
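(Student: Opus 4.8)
The statement combines \prettyref{lmm:chi2tr} with the reduction machinery of \prettyref{prop:reduction}, so the plan is to first prove \prettyref{lmm:chi2tr} as a self-contained fact about total variation, then feed it the specific mixture that arises from the reduction. For \prettyref{lmm:chi2tr} itself, I would start from the bound $\dTV(P_Y, Q_Y) \le \dTV(P_Y, \tilde P_Y) + \dTV(\tilde P_Y, Q_Y)$, where $\tilde P_Y \triangleq \Expect_{X \sim P_X}[\Indc_E(X) P_{Y|X}] + P_X(\comp E) Q_Y$ is the ``truncated'' mixture that replaces the contribution of $X \in \comp E$ by the reference measure $Q_Y$. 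The first term is at most $P_X(\comp E)$ by the triangle inequality and the fact that $\dTV \le 1$; actually a cleaner route is $\dTV(P_Y,\tilde P_Y) \le \tfrac12 P_X(\comp E)$ directly, since $P_Y - \tilde P_Y = \Expect_X[\Indc_{\comp E}(X)(P_{Y|X} - Q_Y)]$ and each $\dTV(P_{Y|X},Q_Y) \le 1$.

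For the second term, the plan is the standard $\chi^2$-to-$\dTV$ inequality: $\dTV(\tilde P_Y, Q_Y) \le \tfrac12 \sqrt{\chi^2(\tilde P_Y \| Q_Y)}$. So I need to expand $\chi^2(\tilde P_Y \| Q_Y) = \int \frac{(\diff \tilde P_Y)^2}{\diff Q_Y} - 1$. Writing $\diff \tilde P_Y = \Expect_X[\Indc_E(X)\diff P_{Y|X}] + P_X(\comp E)\diff Q_Y$, expanding the square, and using Fubini gives three pieces: the cross term with the two mixture parts produces $\Expect_{X,\tX}[g(X,\tX)\Indc_E(X)\Indc_E(\tX)]$; the pure-$Q_Y$ part contributes $P_X(\comp E)^2$; the cross term between the mixture and $P_X(\comp E) Q_Y$ integrates (since $\int \diff P_{Y|X} = 1$) to $2 P_X(\comp E) P_X(E) = 2 P_X(\comp E)(1 - P_X(\comp E))$. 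Summing and subtracting $1$, the $P_X(\comp E)^2$ terms combine to cancel one of the $2P_X(\comp E)^2$ from the cross term, leaving exactly $\Expect_{X,\tX}[g(X,\tX)\Indc_E\Indc_E] - 1 + 2P_X(\comp E)$, which is the quantity under the square root in \prettyref{eq:chi2tr}. Combining with $\tfrac12 P_X(\comp E)$ from the first term yields the claim.

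Once \prettyref{lmm:chi2tr} is established, the remainder (i.e.\ proving \prettyref{prop:reduction}) is to apply it with $X$ the planted clique location in $\calG(n,k,1,\gamma)$ (a uniform $k$-subset), $P_{Y|X}$ the conditional law of $\tG$ given the clique location, $Q_Y$ the target $\Prob_1 = \calG(N,K,p,q)$ conditioned appropriately, and $E$ the ``good'' event that all parent-class sizes $\ell_t$ for $t$ in the clique satisfy $\ell_t \le 2\ell$ (plus possibly that the total planted size $\sum_{t} \ell_t \ge K/2$ or similar). The null-hypothesis claim $P_{\tG \mid H_0^C} = \Prob_0$ is the easy algebraic identity $(1-\gamma)Q'_{\ell_s\ell_t} + \gamma P'_{\ell_s\ell_t} = Q_{\ell_s\ell_t}$ already noted in the text, valid off the good event too since there we use $Q_{\ell_s\ell_t}$ directly. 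For the alternative, the factorized structure over pairs $(s,t)$ of clique parents lets $g(X,\tX)$ be bounded by a product over the symmetric difference of the two clique supports, and the dominant error term $2k^2(8q\ell^2)^{m_0+1}$ comes from $\dTV(P'_{\ell_s\ell_t}, P_{\ell_s\ell_t})$ via \prettyref{lmm:TotalVariationBound} summed over the $\binom{k}{2}$ pairs; the $e^{-K/12}$, $ke^{-\ell/18}$, $\sqrt{e^{72e^2 q\ell^2}-1}$, and $\sqrt{0.5k}\,e^{-\ell/36}$ terms come respectively from the Chernoff bounds on $P_X(\comp E)$, on the planted-size concentration, and from controlling the within-class edges $E(V_s,V_s)$ after averaging over the random partition.

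\textbf{Main obstacle.} The proof of \prettyref{lmm:chi2tr} is routine bookkeeping; the real difficulty is the ``same-parent'' contribution in \prettyref{prop:reduction}. For distinct clique parents $s\ne t$ the kernel is $P'_{\ell_s\ell_t}$, which \prettyref{lmm:TotalVariationBound} shows is close to the target $\Binom(\ell_s\ell_t,p)$; but for a single clique parent $s$, the edges $E(V_s,V_s)$ are drawn from $\Binom(\binom{\ell_s}{2},q)$, \emph{not} the target $\Binom(\binom{\ell_s}{2},p)$, and these are not close individually. The key point --- and the delicate step --- is that after averaging over the random assignment of children to parents (so that which specific vertices of $\tG$ land in $V_s$ is itself random), the law of the within-clique edge configuration becomes close to the target; quantifying this requires the $\chi^2$/second-moment computation isolated in \prettyref{lmm:chi2tr} together with a careful choice of the good event $E$ so that $\ell_s$ is tightly concentrated around $\ell$, and it is where the $\sqrt{e^{72e^2 q\ell^2}-1}$ term is born.
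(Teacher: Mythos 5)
Your overall strategy is the same as the paper's: split $\dTV(P_Y,Q_Y)$ by the triangle inequality through a truncated mixture, then control the second piece by Cauchy--Schwarz and a Fubini expansion that produces $\expect{g(X,\widetilde X)\Indc_E(X)\Indc_E(\tX)}$. However, your specific choice of intermediate measure introduces a genuine gap: you take $\tilde P_Y \triangleq \Expect_X[\Indc_E(X) P_{Y|X}] + P_X(\comp E)\,Q_Y$, a probability measure, whereas the paper uses the unnormalized subprobability measure $\Expect_X[\Indc_E(X) P_{Y|X}]$. Your claim that ``a cleaner route is $\dTV(P_Y,\tilde P_Y)\leq \tfrac12 P_X(\comp E)$'' is incorrect: since $P_Y - \tilde P_Y = \Expect_X[\Indc_{\comp E}(X)(P_{Y|X}-Q_Y)]$ and $\|P_{Y|X}-Q_Y\|_1 \le 2$, the honest bound is $\dTV(P_Y,\tilde P_Y) \le \Expect_X[\Indc_{\comp E}(X)\,\dTV(P_{Y|X},Q_Y)] \le P_X(\comp E)$, with no factor of $\tfrac12$ to be gained in general (equality occurs when $P_{Y|X}$ and $Q_Y$ are mutually singular on $\comp E$).

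Carrying your computation through, you would obtain $\dTV(P_Y,Q_Y) \le P_X(\comp E) + \tfrac12\sqrt{\expect{g\,\Indc_E\Indc_E} - 1 + 2P_X(\comp E) - P_X(\comp E)^2}$, and one can check (by squaring) that this is always $\ge$ the right side of \prettyref{eq:chi2tr}, so it does not establish the lemma as stated. The fix is exactly the paper's route: write $\tfrac12\|P_Y - Q_Y\|_1 \le \tfrac12\|\Expect[P_{Y|X}\Indc_{\comp E}(X)]\|_1 + \tfrac12\|\Expect[P_{Y|X}\Indc_E(X)] - Q_Y\|_1$. The first term is now $\tfrac12 P_X(\comp E)$ on the nose because $\Expect[P_{Y|X}\Indc_{\comp E}(X)]$ is a nonnegative measure of total mass $P_X(\comp E)$ (no triangle inequality against $Q_Y$ is needed). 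The second term is then bounded by Cauchy--Schwarz applied directly to the sub-probability measure (one does not need a genuine $\chi^2$-divergence), and the Fubini expansion gives $\expect{g\,\Indc_E\Indc_E} - 2P_X(E) + 1 = \expect{g\,\Indc_E\Indc_E} - 1 + 2 P_X(\comp E)$, matching \prettyref{eq:chi2tr}. Everything else in your outline (the Fubini step, the identification of $g$) is correct; only the choice of intermediate measure and the resulting factor in front of $P_X(\comp E)$ need changing. The remarks you make about how the lemma feeds into \prettyref{prop:reduction} are sensible but are outside the scope of the statement to be proved.
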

\begin{proof}
By definition of the total variation distance,
\[
\dTV(P_Y,Q_Y) = \frac{1}{2} \|P_Y-Q_Y\|_1 \leq \frac{1}{2} \|\Expect[P_{Y|X}]- \Expect[P_{Y|X} \indc{X\in E}]\|_1+\frac{1}{2} \|\Expect[P_{Y|X} \indc{X\in E}]-Q_Y\|_1,
\]
where the first term is $\|\Expect[P_{Y|X}]- \Expect[P_{Y|X} \indc{X\in E}]\|_1=\|\Expect[P_{Y|X} \indc{X\not\in E}]\|_1=\prob{X\not\in E}$. The second term is controlled by
\begin{align}
%	& ~ 	\|\Expect[P_{Y|X} \indc{X\in E}]-Q_Y\|_1^2\nonumber \\
	\|\Expect[P_{Y|X} \indc{X\in E}]-Q_Y\|_1^2
= & ~ 	\left( \expects{\left|\frac{\expect{P_{Y|X}\indc{X\in E}}}{Q_Y}  - 1 \right|}{Q_Y} \right)^2\nonumber \\
\leq & ~ 	\expects{\pth{\frac{\expect{P_{Y|X}\indc{X\in E}}}{Q_Y}  - 1 }^2}{Q_Y} \label{eq:t1}\\
= & ~ 	\expects{\pth{\frac{\expect{P_{Y|X}\indc{X\in E}}}{Q_Y}}^2}{Q_Y} + 1 - 2 \, \Expect[\expect{P_{Y|X}\indc{X\in E}}]\\
= & ~ 	\expect{g(X,\tilde{X}) \Indc_E(X) \Indc_E(\tX)} + 1 - 2 \, \prob{X\in E} \label{eq:t2},
\end{align}
where \prettyref{eq:t1} is Cauchy-Schwartz inequality, \prettyref{eq:t2} follows from Fubini theorem. This proves the desired \prettyref{eq:chi2tr}.
\end{proof}

Note that  $\{V_t: t\in [n]\}$ can be equivalently generated as follows: Throw balls indexed by $[N]$ into bins indexed by $[n]$ independently and uniformly at random; let $V_t$ denote the set of balls in the $t^{\Th}$ bin. Furthermore, Fix a subset $C \subset [n]$ and let $S=\cup_{t \in C} V_t$. Conditioned on $S$,
$\{V_t: t \in C \}$ can be generated by throwing balls indexed by $S$ into bins indexed by $C$ independently and uniformly at random.
We need the following negative association property \cite[Definition 1]{Desh98}.
% random variables
% which allows us to deal with the dependence.
%Let $\{\tV_t: t\in [n]\}$ denote an independent copy.
%The following lemma proves the negative association property \cite{Desh98} of the full vector $ \{ |V_s \cap \widetilde{V}_t |: s,t \in[k]\} $.
\begin{lemma}
Fix a subset $C \subset [n]$ and let $S=\cup_{t \in C} V_t$. Let $\{\tV_t: t\in C \}$ be an independent copy of $\{V_t: t \in C \}$ conditioned on $S$.
Then conditioned on $S$, the full vector $ \{ |V_s \cap \widetilde{V}_t |: s,t \in C  \} $ is negatively associated, i.e., for every two disjoint index sets $I, J \subset C \times C$,
\begin{align*}
\Expect[f(V_s \cap \widetilde{V}_t, (s,t) \in I ) g( V_s \cap \widetilde{V}_t, (s,t) \in J )  ] \le \Expect[f(V_s \cap \widetilde{V}_t, (s,t) \in I )] \Expect[g( V_s \cap \widetilde{V}_t, (s,t) \in J )],
\end{align*}
for all functions $f: \reals^{|I|} \to \reals$ and $g: \reals^{|J|} \to \reals$ that are either both non-decreasing or both non-increasing in every argument.
\label{lmm:NA}
\end{lemma}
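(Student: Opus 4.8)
The plan is to realize the array $\{|V_s \cap \widetilde V_t| : s,t \in C\}$ as a family of monotone functions of pairwise disjoint blocks of coordinates of an underlying negatively associated collection, and then to invoke the standard closure properties of negative association. Concretely, I would first pass to the ball-and-bin description recalled just above the lemma: conditioned on $S = \cup_{t\in C} V_t$, the partition $\{V_t : t\in C\}$ is obtained by assigning to each ball $b\in S$ an independent, uniform parent $\pi(b)\in C$, and the independent copy $\{\widetilde V_t : t\in C\}$ is obtained from a second, independent family of assignments $\{\widetilde\pi(b)\}_{b\in S}$, each uniform on $C$. Thus, conditioned on $S$, the pairs $\{(\pi(b),\widetilde\pi(b))\}_{b\in S}$ are i.i.d., each uniform on $C\times C$, and for every $s,t\in C$,
\[
|V_s\cap\widetilde V_t| \;=\; \sum_{b\in S} W^{(b)}_{s,t}, \qquad W^{(b)}_{s,t} \;\triangleq\; \indc{\pi(b)=s,\ \widetilde\pi(b)=t}.
\]

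Next, for each fixed $b$, the $0$–$1$ vector $W^{(b)} = (W^{(b)}_{s,t})_{(s,t)\in C\times C}$ has exactly one coordinate equal to $1$ and all the others equal to $0$; any $0$–$1$ vector whose entries sum to one is negatively associated (one of the basic instances of negative association, see \cite{Desh98}). Since, conditioned on $S$, the vectors $\{W^{(b)}\}_{b\in S}$ are mutually independent, and a union of independent negatively associated vectors is negatively associated, the whole array $\big(W^{(b)}_{s,t}: b\in S,\ (s,t)\in C\times C\big)$ is negatively associated conditioned on $S$.

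Finally I would close the argument with the monotone-aggregation property: for distinct pairs $(s,t)$ the index blocks $B_{s,t}\triangleq\{(b,s,t): b\in S\}$ are pairwise disjoint in $S\times C\times C$, and $|V_s\cap\widetilde V_t| = \sum_{(b,s,t)\in B_{s,t}} W^{(b)}_{s,t}$ is coordinatewise non-decreasing in the variables of $B_{s,t}$. Because non-decreasing functions of disjoint coordinate blocks of a negatively associated family again form a negatively associated family, the array $\{|V_s\cap\widetilde V_t|: (s,t)\in C\times C\}$ is negatively associated conditioned on $S$; specializing the definition of negative association to disjoint index sets $I,J\subset C\times C$ then yields precisely the displayed inequality in the lemma. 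The proof is purely an exercise in the calculus of negative association, so there is no substantial obstacle; the only points that need a moment's care are verifying that the one-hot vector $W^{(b)}$ is genuinely negatively associated and checking that the two closure rules are applied with their hypotheses met (independence of blocks for the union rule; disjointness of blocks together with a common monotonicity direction, here non-decreasing, for the aggregation rule).
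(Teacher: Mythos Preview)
Your proposal is correct and is essentially the same argument as the paper's proof: the paper defines the same indicators $Z_{m,s,t}=\indc{m\in V_s\cap\widetilde V_t}$ (your $W^{(m)}_{s,t}$), invokes the balls-into-bins negative association result from \cite{Desh98} for the full array $\{Z_{m,s,t}\}$, and then applies the monotone-aggregation closure over the disjoint blocks $\{(m,s,t):m\in S\}$. The only cosmetic difference is that you unpack the first step into ``one-hot vectors are NA'' plus ``independent union preserves NA,'' whereas the paper cites a single proposition covering both at once.
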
	
\begin{proof}
Define the indicator random variables $Z_{m,s,t}$ for $m \in S, s,t \in C$ as
\begin{align*}
Z_{m,s,t} = \left\{
    \begin{array}{rl}
    1 & \text{if the $m^\Th$ ball is contained in $V_s$ and $\widetilde{V}_t$},\\
    0& \text{otherwise }.
    \end{array} \right.
\end{align*}
By \cite[Proposition 12]{Desh98}, the full vector $\{ Z_{m,s,t}: m \in S, s,t \in C \}$ is negatively associated. By definition, we have
\begin{align*}
|V_s \cap \widetilde{V}_t |= \sum_{m \in S} Z_{m, s,t},
\end{align*}
which is a non-decreasing function of $\{Z_{m,s,t}: m \in S \}$. Moreover, for distinct pairs $(s,t) \neq (s',t')$,
the sets $\{(m,s,t): m \in S \}$ and $\{(m,s',t'): m \in S \}$ are disjoint. Applying \cite[Proposition 8]{Desh98} yields the desired statement.
\end{proof}
The negative association property of $\{ |V_s \cap \widetilde{V}_t |: s,t \in C  \} $ allows us to bound the expectation of any non-decreasing function of $\{ |V_s \cap \widetilde{V}_t |: s,t \in C  \} $ conditional on $C$ and $S$ as if they were independent \cite[Lemma 2]{Desh98}, i.e., for any collection of non-decreasing functions $\{f_{s,t}: s,t \in [n]\}$,
\begin{align}
\Expect \left[ \prod_{s,t \in C} f_{s,t} (|V_s \cap  \widetilde{V}_t| ) \biggm \vert  C, S\right] \le  \prod_{s,t \in C} \Expect \left[ f_{s,t} (|V_s \cap  \widetilde{V}_t| ) \biggm \vert  C, S\right]. \label{eq:NAbound}
\end{align}

\begin{lemma} \label{LemmaComparison}
Suppose that $X \sim \Binom(1.5K, \frac{1}{k^2})$ and $Y \sim \Binom( 3 \ell, \frac{e}{k})$ with $K=k\ell$ and $ k \ge 6e \ell$. Then for all $ 1 \le m \le 2 \ell-1$,
\begin{align*}
\mathbb{P} [ X=m ] \le \mathbb{P}[Y =m ],
\end{align*}
and $ \mathbb{P}[ X \ge 2\ell] \le   \mathbb{P}[Y =2 \ell]$.
\end{lemma}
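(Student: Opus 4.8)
The plan is to compare the two binomial probability mass functions termwise by bounding their ratio. Write $X\sim\Binom(1.5K,1/k^2)$ with $1.5K=1.5k\ell$ and $Y\sim\Binom(3\ell,e/k)$. For $1\le m\le 2\ell-1$ both pmf's are positive, so it suffices to show $\Prob[X=m]/\Prob[Y=m]\le 1$. The ratio is
\[
\frac{\Prob[X=m]}{\Prob[Y=m]}=\frac{\binom{1.5k\ell}{m}}{\binom{3\ell}{m}}\cdot\frac{(1/k^2)^m}{(e/k)^m}\cdot\frac{(1-1/k^2)^{1.5k\ell-m}}{(1-e/k)^{3\ell-m}}.
\]
First I would bound the binomial coefficient ratio: since $\binom{1.5k\ell}{m}\le (1.5k\ell)^m/m!$ and $\binom{3\ell}{m}\ge ((3\ell-m)/m)^m\cdot$ (or more cleanly, $\binom{3\ell}{m}\ge (3\ell/m)^m\cdot$ a correction, but the cleanest route is $\binom{n}{m}\ge (n/m)^m$ is false; rather use $\binom{3\ell}{m}\ge \big(\frac{3\ell}{m}\big)^m\cdot$ nothing — instead use the elementary bound $\binom{3\ell}{m}\ge \frac{(3\ell-m)^m}{m!}\ge \frac{\ell^m}{m!}$ valid for $m\le 2\ell$). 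Combining, $\binom{1.5k\ell}{m}/\binom{3\ell}{m}\le (1.5k\ell)^m/\ell^m=(1.5k)^m$. The power-of-$(1/k)$ factors contribute $(1/(ek))^m$. Hence the ratio is at most $(1.5k)^m (1/(ek))^m\cdot(\text{exponential factors})=(1.5/e)^m\cdot(\text{exponential factors})$.

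The remaining exponential factors are $(1-1/k^2)^{1.5k\ell-m}\le 1$ in the numerator, and $(1-e/k)^{3\ell-m}$ in the denominator, which is at most $1$ but appears with a negative power, so $1/(1-e/k)^{3\ell-m}$ could be as large as roughly $e^{3e\ell/k}$ up to lower-order terms; here the hypothesis $k\ge 6e\ell$ enters, giving $3e\ell/k\le 1/2$, so this factor is bounded by an absolute constant like $e^{1/2}<1.7$. Since $1.5/e<0.56$, we get the ratio bounded by $(1.5/e)^m\cdot 1.7$, which for $m\ge 1$ is at most $0.56\cdot 1.7<1$; for the worst term $m=1$ one checks the constant works, and for larger $m$ the geometric decay $(1.5/e)^m$ only helps. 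The main obstacle, and the place to be careful, is getting the constants to actually close at $m=1$ — I would double-check the binomial-coefficient lower bound $\binom{3\ell}{m}\ge\ell^m/m!$ and possibly sharpen it (using $\binom{3\ell}{m}\ge (2\ell)^m/m!$ for $m\le\ell$, or handling $m\ge\ell$ separately where $(1.5/e)^m$ is already tiny) so that the product is genuinely $\le 1$ rather than merely $O(1)$.

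Finally, for the tail statement $\Prob[X\ge 2\ell]\le\Prob[Y=2\ell]$, I would bound $\Prob[X\ge 2\ell]\le\sum_{m\ge 2\ell}\Prob[X=m]$ and compare each term to $\Prob[Y=2\ell]$: for $m=2\ell$ the previous termwise analysis applies, and for $m>2\ell$ the ratio $\Prob[X=m]/\Prob[X=2\ell]$ decays geometrically (the mode of $X$ is $\approx 1.5\ell\cdot k^{-1}\cdot$ — more precisely $1.5K/k^2=1.5\ell/k\ll 2\ell$, so $X$'s pmf is already decreasing well before $2\ell$), giving $\Prob[X\ge 2\ell]\le C\,\Prob[X=2\ell]\le C\,(1.5/e)^{2\ell}\cdot(\text{const})\le\Prob[Y=2\ell]$ once one checks $\Prob[Y=2\ell]$ is not too small — indeed $\Prob[Y=2\ell]=\binom{3\ell}{2\ell}(e/k)^{2\ell}(1-e/k)^{\ell}$, and $\binom{3\ell}{2\ell}(e/k)^{2\ell}\ge (\ell\cdot e/k\cdot$ stuff$)$; the geometric slack from $(1.5/e)^{2\ell}$ versus the $(e/k)^{2\ell}$ in $\Prob[Y=2\ell]$ dominates since both carry a $k^{-2\ell}$ and $1.5^{2\ell}\le 3^\ell e^\ell$ comfortably. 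I expect the tail case to follow with room to spare once the termwise case is nailed down.
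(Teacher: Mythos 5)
Your approach—comparing the two pmfs term by term via elementary bounds on the binomial coefficients and the residual exponential factor, and handling the tail by geometric summation—is exactly what the paper does, and your constants do in fact close without further sharpening: for $m\ge 1$ the denominator factor satisfies $(1-e/k)^{-(3\ell-m)}\le (1-\tfrac{1}{6\ell})^{-(3\ell-1)}<e^{1/2}$, so even with the weak lower bound $\binom{3\ell}{m}\ge \ell^m/m!$ the ratio is at most $(1.5/e)^m\,e^{1/2}\le (1.5/e)\,e^{1/2}\approx 0.91<1$. Incidentally, the inequality you were hesitant about, $\binom{n}{m}\ge (n/m)^m$, is valid (each factor $(n-i)/(m-i)\ge n/m$) and is what the paper uses together with $\binom{n}{m}\le (en/m)^m$, yielding the cleaner estimate $\mathbb{P}[X=m]/\mathbb{P}[Y=m]\le 2^{1-m}\le 1$ and a correspondingly painless tail bound.
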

\begin{proof}
In view of the fact that $(\frac{n}{m})^m \leq \binom{n}{m} \leq (\frac{en}{m})^m$, we have for $1 \le m \le 2 \ell$,
\begin{align*}
\mathbb{P}[ X =m ] &= \binom{1.5K}{m} \left( \frac{1}{k^2} \right)^m \left( 1- \frac{1}{k^2} \right)^{1.5K-m} \le \left( \frac{1.5eK}{mk^2} \right)^m.
\end{align*}
Therefore,
\begin{align*}
\mathbb{P}[X \ge 2 \ell] \le \sum_{m =2\ell}^\infty \left( \frac{1.5e\ell}{km} \right)^m \le  \sum_{m =2\ell}^\infty \left( \frac{3e}{4k} \right)^m \le \frac{(0.75e/k)^{2\ell}}{1-0.75e/k}.
\end{align*}
On the other hand, for $ 1 \le m \le 2 \ell-1$
\begin{align*}
\mathbb{P} [Y=m] & = \binom{3 \ell }{m} \left( \frac{e}{k} \right)^m \left(1 - \frac{e}{k} \right)^{3 \ell -m } \\
& \ge \left( \frac{3 e \ell }{m k} \right)^m \left(1 - \frac{3 e \ell }{k} \right) \\
& \ge  2^{m-1} \left( \frac{ 1.5 e \ell }{m k} \right)^m \ge \mathbb{P}[ X =m ].
\end{align*}
Moreover, $\mathbb{P}[Y =2 \ell] \ge \mathbb{P}[ X \ge 2\ell].$
\end{proof}

\begin{lemma}
	Let $T \sim \Binom(\ell,\tau)$ and $\lambda > 0$. Assume that $\lambda \ell \leq \frac{1}{16}$. Then
\begin{equation}
\Expect[\exp(\lambda T(T-1))] \leq	\exp\pth{16\lambda \ell^2 \tau^2 }.
%\exp\pth{\frac{16\lambda \ell^2}{k^2} }.
	\label{eq:decoup}
\end{equation}
	\label{lmm:decoup}
\end{lemma}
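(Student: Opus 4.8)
The plan is to prove the estimate by a decoupling (symmetrization) argument: $T(T-1)$ is a quadratic form in independent Bernoulli variables, and I would replace it, at the cost of an absolute multiplicative constant, by a product of two \emph{independent} binomials, after which the moment generating function factorizes and can be controlled by iterating the elementary bounds $(1-\tau+\tau e^{s})^{n}\le\exp(n\tau(e^{s}-1))$ and $e^{s}-1\le se^{s}$.

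Write $T=\sum_{i=1}^{\ell}X_{i}$ with $X_{i}\iiddistr\Bern(\tau)$, so that $T(T-1)=\sum_{i\neq j}X_{i}X_{j}$. I would introduce i.i.d.\ labels $B_{1},\dots,B_{\ell}\sim\Bern(\tfrac12)$, independent of $(X_{i})$, and observe that $\Expect_{B}[\mathbf 1\{B_{i}=1,B_{j}=0\}]=\tfrac14$ for $i\neq j$, whence $T(T-1)=4\,\Expect_{B}[T_{1}T_{0}]$ where $T_{1}=\sum_{i:B_{i}=1}X_{i}$ and $T_{0}=\sum_{i:B_{i}=0}X_{i}$. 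Applying Jensen's inequality to the convex map $x\mapsto e^{\lambda x}$ and then taking $\Expect_{X}$ gives $\Expect[e^{\lambda T(T-1)}]\le\Expect_{B}\Expect_{X}[e^{4\lambda T_{1}T_{0}}]$, and conditionally on $B$ the variables $T_{1}\sim\Binom(n_{1},\tau)$ and $T_{0}\sim\Binom(n_{0},\tau)$ (with $n_{1}+n_{0}=\ell$) are independent, since they involve disjoint coordinates. Alternatively, this step can be replaced by the classical order-two $U$-statistic decoupling inequality, which bounds $\Expect\Psi(\sum_{i\ne j}X_{i}X_{j})$ by $\Expect\Psi(4\sum_{i\ne j}X_{i}X_{j}')\le\Expect\Psi(4TT')$ for an independent copy $(X_{i}')$ and any convex increasing $\Psi$.

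The remaining task is to bound $\Expect[e^{4\lambda T_{1}T_{0}}]$ for fixed $n_{0},n_{1}$. I would integrate out $T_{0}$ first: since $T_{1}\le\ell$ and $\lambda\ell\le\tfrac1{16}$, the argument $s=4\lambda T_{1}\le\tfrac14$ is small, so $(1-\tau+\tau e^{s})^{n_{0}}\le e^{n_{0}\tau(e^{s}-1)}\le\exp(4e^{1/4}\lambda n_{0}\tau\,T_{1})$. Then integrate out $T_{1}$: its exponential coefficient $4e^{1/4}\lambda n_{0}\tau\le 4e^{1/4}\lambda\ell\le\tfrac{e^{1/4}}{4}<\tfrac12$ is again small, so the same two bounds give $\Expect[e^{4\lambda T_{1}T_{0}}]\le\exp(4e^{3/4}\lambda n_{0}n_{1}\tau^{2})$. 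Finally $4e^{3/4}<10<16$ and $n_{0}n_{1}\le\ell^{2}$, hence $\Expect[e^{4\lambda T_{1}T_{0}}]\le e^{16\lambda\ell^{2}\tau^{2}}$ uniformly in $B$; averaging over $B$ yields the claimed bound.

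The computation is routine; the one point to watch is the \emph{order} of the two binomial integrations together with the linearization $e^{s}-1\le se^{s}$ applied at arguments kept small by $\lambda\ell\le\tfrac1{16}$ — this is precisely what produces the factor $\tau^{2}$ (one power of $\tau$ from each stage) rather than the weaker $\tau$ that a crude bound such as $T(T-1)\le\ell T$ would give. The decoupling step is what makes the two coordinate-disjoint sums available in the first place: in the original quadratic form the cross term $T^{2}$ couples all coordinates and the elementary binomial estimates cannot be applied directly.
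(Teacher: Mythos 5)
Your proof is correct and follows essentially the same strategy as the paper: decouple the diagonal-free quadratic form $T(T-1)=\sum_{i\neq j}X_iX_j$ into a product of two independent binomial sums at the cost of a factor $4$, then integrate out the two binomials in turn using the moment-generating-function formula together with the linearization $e^{s}-1\le e^{a}s$ for $s\in[0,a]$, with $\lambda\ell\le 1/16$ keeping the arguments small at each stage. The only difference is how the decoupling step is justified: the paper invokes a standard decoupling inequality to pass from $\sum_{i\neq j}t_it_j$ to $4\sum_{i\neq j}s_it_j\le 4ST$ with $S$ an independent copy of $T$, whereas your random-labeling identity $T(T-1)=4\,\mathbb{E}_B[T_1T_0]$ followed by Jensen's inequality gives a self-contained derivation of the same fact with the same constant $4$.
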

\begin{proof}
 Let $(\ntok{s_1}{s_\ell},\ntok{t_1}{t_\ell}) \, \iiddistr \, \Bern(\tau)$, $S = \sum_{i=1}^\ell  s_i$ and $T = \sum_{i=1}^\ell  t_i$.
 Next we use a decoupling argument to replace $T^2-T$ by $ST$:
\begin{align}
\expect{\exp\pth{\lambda T(T-1)}}
= & ~ \Expect\bqth{\exp\bpth{\lambda \sum_{i \neq j} t_i t_j}}	\nonumber \\
\leq & ~ \Expect\bqth{\exp\bpth{4 \lambda \sum_{i \neq j} s_i t_j}}, \label{eq:decouple}\\
\leq & ~ \expect{\exp\pth{4 \lambda S T}}, \nonumber
\end{align}
where
%	\prettyref{eq:CS} is by Cauchy-Schwartz inequality and
	\prettyref{eq:decouple} is a standard decoupling inequality (see, \eg, \cite[Theorem 1]{Vershynin11}). Since $\lambda T \leq \lambda \ell \leq \frac{1}{16}$ and $\exp(x)-1 \leq \exp(a) x$ for all $x \in [0,a]$, the desired \prettyref{eq:decoup} follows from
%		$\expect{\exp\pth{2 \lambda S}} = (1+\frac{1}{k} (\exp(2\lambda)-1))^\ell \leq \exp(\frac{2\lambda \ell}{k})$. Moreover,
%	\begin{align*}
%	\expect{\exp\pth{4 \lambda S T}}
%= & ~ \expect{\pth{1+\frac{1}{k} (\exp(4\lambda T)-1)}^\ell}	\nonumber \\
%\leq & ~ \expect{\pth{1+\frac{8\lambda T}{k} }^\ell}	\nonumber \\
%\leq & ~ \expect{\exp\pth{\frac{8\lambda \ell T}{k} }}	\nonumber \\
%= & ~ \pth{1+\frac{1}{k} \pth{\exp\pth{\frac{8\lambda \ell}{k} }-1}}^\ell	\nonumber \\
%\leq & ~ \exp\pth{\frac{16\lambda \ell^2}{k^2} }. \qedhere
%\end{align*}	
\begin{align*}
	\expect{\exp\pth{4 \lambda S T}}
= & ~ \expect{\pth{1+ \tau (\exp(4\lambda T)-1)}^\ell}	\nonumber \\
\leq & ~ \expect{\pth{1+ 8\tau\lambda T }^\ell}	\nonumber \\
\leq & ~ \expect{\exp\pth{8\tau\lambda \ell T }}	\nonumber \\
= & ~ \pth{1+ \tau\pth{\exp\pth{8\tau\lambda \ell }-1}}^\ell	\nonumber \\
\leq & ~ \exp\pth{16\tau^2\lambda \ell^2  }.
%\qedhere
\end{align*}	
	\end{proof}

\begin{proof}[Proof of \prettyref{prop:reduction}]
Let $[i,j]$ denote the unordered pair of $i$ and $j$.
For any set $I \subset [N]$, let $\calE(I)$ denote the set of unordered pairs of distinct elements in $I$, i.e., $\calE(I)=\{[i,j]: i,j \in S, i\neq j\}$, and let $\calE(I)^c=\calE( [N] ) \setminus \calE(I)$.
%For $U,V\subset [N]$, let $\tP_{UV}$ denote the edge distribution of the subgraph $\tG_{UV}$ of $\tG$ induced by vertices in $U$ and $V$.  %$\tA_{UV}=(\tA_{ij})_{i\in U,j\in V,i<j}$.
For $s, t \in [n]$ with $s \neq t$, let $\tG_{V_sV_t}$ denote the bipartite graph where the set of left (right) vertices is $V_s$ (resp.\xspace $V_t$) and the set of edges is the set of edges in $\tG$ from vertices in $V_s$ to vertices in $V_t$. For $s \in [n]$, let $\tG_{V_sV_s}$ denote the subgraph of $\tG$ induced by $V_s$. Let $\tP_{V_sV_t}$ denote the edge distribution of $\tG_{V_sV_t}$ for $s, t \in [n]$.
%Let $\tP_{UU}$ denote the distribution of (the upper triangular part of) the submatrix $\tA_{UU}=(\tA_{ij})_{[i,j]\in \calE(U)}$.
%The proof is divided into three parts. Step 1 shows that under the null hypothesis, $\tP_{V_sV_t}$ is the product distribution formed by $\Bern(q)$ for all $1\le s\le t \le n$ and therefore $P_{\tG|H_0^C}=\Prob_0$. Under the alternative hypothesis, Step 2.1 shows that conditional on the planted clique $C$ and $\{V_s, s \in C\}$, $\tP_{V_sV_t}$ is close to the product distribution formed by $\Bern(p)$ for all $s, t \in C$ and $s \neq t$; Step 2.2 shows that conditional on the planted clique $C$, $\tP_{V_s V_s}$ mixed over $\{V_s, s\in C\}$ is close to the product distribution formed by $\Bern(p)$ for all $s \in C$.

%{\bf Step 1:}
First, we show that the null distributions are exactly matched by the reduction scheme.
\prettyref{lmm:TotalVariationBound} implies that $P'_{\ell_s \ell_t}$ and $Q'_{\ell_s \ell_t}$ are well-defined probability measures,
and by definition, $(1-\gamma) Q'_{\ell_s \ell_t}+ \gamma P'_{\ell_s \ell_t} = Q_{\ell_s \ell_t} = \Binom(\ell_s \ell_t,q)$.
Under the null hypothesis, $G \sim \mathcal{G}(n, \gamma)$ and therefore, according to our reduction scheme, $E(V_s, V_t) \sim \Binom(\ell_s \ell_t,q)$ for $s<t$ and $E(V_t, V_t) \sim \Binom(\binom{\ell_t}{2},q)$.
%Recall that $Q_{\ell_s, \ell_t}=\Binom(\ell_s \ell_t,q)$.
 Since the vertices in $V_s$ and $V_t$ are connected uniformly at random such that the total number of edges is $E(V_s,V_t)$, it follows that $\tP_{V_sV_t}=\prod_{(i,j) \in V_s \times V_t} \Bern(q)$ for $s<t$ and $\tP_{V_sV_t} = \prod_{[i,j] \in \calE(V_s)} \Bern(q)$ for $s=t$.  Conditional on $V_1^n$, $\{E(V_s, V_t): 1\le s \le t \le n \}$ are independent and so are $\{\tG_{V_s V_t}:
  1\le s \le t \le n\}$. Consequently, $P_{\tG|H_0^C}=\Prob_0=\prod_{[i,j] \in \calE([N]) } \Bern(q)$ and $\widetilde{G} \sim \mathcal{G}(N,q)$.

%{\bf Step 2:}
Next, we proceed to consider the alternative hypothesis, under which $G$ is drawn from the planted clique model $\mathcal{G}(n,k,\gamma)$. Let $C \subset [n]$ denote the planted clique. Define $S=\cup_{t \in C} V_t$ and recall $K=k \ell$. Then $|S| \sim \Binom(N, K/N)$ and conditional on $|S|$, $S$ is uniformly distributed over all possible subsets of size $|S|$ in $[N]$.
By the symmetry of the vertices of $G$, the distribution of $\tA$ conditional on $C$ does not depend on $C$. Hence, without loss of generality, we shall assume that $C=[k]$ henceforth.
%Let $\tP_{SS}$ denote the distribution of the submatrix $\tA_{\calE(S)}$.
The distribution of $ \widetilde{A}$ can be written as a mixture distribution indexed by the random set $S$ as
\begin{align*}
\widetilde{A} \sim \widetilde{\Prob}_1 \triangleq \mathbb{E}_{S} \left[\tP_{SS} \times   \prod_{ [i,j] \in \calE(S)^c } \Bern(q) \right],
\end{align*}
By the definition of $\Prob_1$,
\begin{align}
%& d_{\rm TV} ( \widetilde{\Prob}_1, \Prob_1 ) \nonumber \\
d_{\rm TV} ( \widetilde{\Prob}_1, \Prob_1 )
&= d_{\rm TV} \left( \mathbb{E}_{S} \left[ \tP_{SS} \times \prod_{[i,j] \in \calE(S)^c } \Bern(q) \right],    \mathbb{E}_{S} \left[  \prod_{[i,j] \in \calE(S) } \Bern(p) \prod_{[i,j] \in \calE(S)^c } \Bern(q)   \right] \right) \nonumber \\
& \le \mathbb{E}_{S} \left[  d_{\rm TV} \left(  \tP_{SS} \times \prod_{[i,j] \in \calE(S)^c } \Bern(q), \prod_{[i,j] \in \calE(S) } \Bern(p) \prod_{[i,j] \in \calE(S)^c } \Bern(q)   \right) \right] \nonumber\\
& =\mathbb{E}_{S} \left[  d_{\rm TV} \left( \tP_{SS}, \prod_{[i,j] \in \calE(S) } \Bern(p)  \right) \right] \nonumber\\
%& \le \mathbb{E}_{C,S} \left[  d_{\rm TV} \left( \mathbb{E}_{\{V_i: i\in C\} } \left [ \prod_{[i,j] \in C \times C } \widetilde{P}_{ij} \biggm \vert S \right], \prod_{[i,j] \in S\times S} \Bern(p)  \right) \1{ 0.5 K \le |S| \le 2K }\right] + \Prob \{ |S|> 2K \} + \Prob\{ |S|<0.5K \} \\
& \le \mathbb{E}_{S} \left[  d_{\rm TV} \left( \tP_{SS}, \prod_{[i,j] \in \calE(S)} \Bern(p)  \right) \1{ |S| \le 1.5K }\right] + \exp (-K/12), \label{eq:TotalvariationDistance}
\end{align}
where the first inequality follows from the convexity of $(P,Q) \mapsto d_{\rm TV} (P,Q)$, and the last inequality follows from applying the Chernoff bound to $|S|$.
Fix an $S \subset [N]$ such that $|S| \le 1.5 K$. Define $P_{V_tV_t}=\prod_{[i,j] \in \calE(V_t)} \Bern(q)$ for $t \in [k]$ and
$P_{V_sV_t}= \prod_{(i,j) \in V_s \times V_t} \Bern(p)$ for $1 \le s<t \le k$.
By the triangle inequality,
\begin{align}
d_{\rm TV} \left( \tP_{SS},
\prod_{[i,j] \in \calE(S)} \Bern(p)   \right)
\le & ~d_{\rm TV} \left( \tP_{SS},  \mathbb{E}_{V_1^k } \left [  \prod_{1 \le s \le t\le k} P_{V_sV_t} \biggm \vert S \right]\right) \label{eq:TotalVarTriangle1} \\
& ~+d_{\rm TV}  \left( \mathbb{E}_{V_1^k} \left [  \prod_{1 \le s \le t\le k} P_{V_sV_t} \biggm \vert S \right], \prod_{[i,j] \in \calE(S)} \Bern(p) \right). \label{eq:TotalVarTriangle2}
\end{align}

%{\bf Step 2.1:}
To bound the term in \prettyref{eq:TotalVarTriangle1}, first note that conditional on $S$, $\{V_1^k\}$ can be generated as follows: Throw balls indexed by $S$ into bins indexed by $[k]$ independently and uniformly at random; let $V_t$ is the set of balls in the $t^\Th$ bin. Define the event $E = \{V_1^k: |V_t| \leq 2 \ell, t \in[k]\}$. Since $|V_t| \sim \Binom(|S|,1/k)$ is stochastically dominated by $\Binom(1.5K,1/k)$ for each fixed $ 1 \le t\le k$, it follows from the Chernoff bound and the union bound that $\Prob\{E^c\} \le k \exp(-\ell/18)$.
\begin{align*}
& d_{\rm TV} \left(  \tP_{SS},  \mathbb{E}_{V_1^k} \left [  \prod_{1 \le s \le t \le k} P_{V_sV_t} \biggm \vert S \right]\right) \nonumber\\
&\overset{(a)}{=} d_{\rm TV} \left(  \mathbb{E}_{V_1^k} \left [  \prod_{1 \le s \le t \le k} \widetilde{P}_{V_s V_t}  \biggm \vert S\right],  \mathbb{E}_{V_1^k} \left [  \prod_{1 \le s \le t \le k} P_{V_sV_t} \biggm \vert S \right]\right) \nonumber \\
& \le \mathbb{E}_{V_1^k } \left [ d_{\rm TV} \left(  \prod_{1 \le s \le t \le k} \widetilde{P}_{V_sV_t},   \prod_{1 \le s \le t \le k} P_{V_sV_t} \right) \biggm \vert S \right] \nonumber\\
%& \overset{(a)}{=} \mathbb{E}_{V_1^k } \left [ d_{\rm TV} \left(  \prod_{1 \le s \le t \le k} \widetilde{P}_{V_s V_t} ,   \prod_{1 \le s \le t \le k} P_{V_sV_t} \right) \biggm \vert S \right] \nonumber \\
& \le \mathbb{E}_{V_1^k } \left [ d_{\rm TV} \left( \prod_{1 \le s \le t \le k} \widetilde{P}_{V_sV_t},   \prod_{1 \le s \le t \le k} P_{V_sV_t} \right)  \1{V_1^k \in E}\biggm \vert S \right] +k \exp(-\ell/18),
\end{align*}
where $(a)$ holds because conditional on $V_1^k$, $\left\{ \tA_{V_s V_t} : s, t \in [k] \right\}$ are independent.  Recall that $\ell_t=|V_t|$. For any fixed $V_1^k \in E$, we have
\begin{align*}
 d_{\rm TV} \left( \prod_{1 \le s \le t \le k} \widetilde{P}_{V_sV_t},   \prod_{1 \le s \le t \le k} P_{V_sV_t} \right)
 &\overset{(a)}{=} d_{\rm TV} \left( \prod_{1\leq s<t \leq k} \widetilde{P}_{V_sV_t},   \prod_{1\leq s<t \leq k} P_{V_sV_t} \right)  \nonumber  \\
 &\overset{(b)}{=} d_{\rm TV} \left( \prod_{1\leq s<t \leq k} P'_{\ell_s \ell_t},  \prod_{1\leq s<t \leq k} P_{\ell_s \ell_t} \right) \nonumber  \\
& \le  d_{\rm TV} \left( \prod_{1\leq s<t \leq k} P'_{\ell_s \ell_t},  \prod_{1\leq s<t \leq k} P_{\ell_s \ell_t} \right) \nonumber \\
& \le \sum_{ 1 \le s< t \le k} d_{\rm TV} \left(   P'_{\ell_s \ell_t},  P_{\ell_s \ell_t} \right) \overset{(c)}{\le} 2 k^2 (8q \ell^2)^{(m_0+1)},
\end{align*}
where $(a)$ follows since $\tP_{V_tV_t}=P_{V_tV_t}$ for all $t \in [k]$;
 $(b)$ is because the number of edges $E(V_s,V_t)$ is a sufficient statistic for testing $\tP_{V_sV_t}$ versus $P_{V_sV_t}$ on the submatrix $A_{V_sV_t}$ of the adjacency matrix; $(c)$ follows from \prettyref{lmm:TotalVariationBound}. Therefore,
\begin{align}
d_{\rm TV} \left(  \tP_{SS},  \mathbb{E}_{V_1^k} \left [  \prod_{1 \le s \le t \le k} P_{V_sV_t} \biggm \vert S \right]\right) \le 2 k^2 (8q \ell^2)^{(m_0+1)} +k \exp(-\ell/18).\label{eq:TotalVariationNondiagonal}
\end{align}

%{\bf Step 2.2:}
To bound the term in \prettyref{eq:TotalVarTriangle2}, applying \prettyref{lmm:chi2tr} yields
\begin{align}
 %& d_{\rm TV}   \left( \mathbb{E}_{V_1^k } \left [ \prod_{[i,j] \in [k]^2 } P_{ij}  \biggm \vert S \right],
%\prod_{[i,j] \in S\times S} \Bern(p) \right) \nonumber\\
& d_{\rm TV}  \left( \mathbb{E}_{V_1^k } \left [ \prod_{1 \le s \le t \le k} P_{V_sV_t}\biggm \vert S \right],\prod_{[i,j] \in \calE(S) } \Bern(p)  \right) \nonumber\\
& \le \frac{1}{2} \prob{E^c} + \frac{1}{2}
\sqrt{\mathbb{E}_{  V_1^k; \tV_1^k} \left[ g (V_1^k , \tV_1^k) \indc{V_1^k \in E}\indc{\tV_1^k \in E} \biggm \vert S  \right] -1 + 2 \prob{E^c}}, \label{eq:mixturebound}
\end{align}
where
\begin{align*}
g (V_1^k , \tV_1^k) &= \int \frac{\prod_{1 \le s \le t \le k} P_{V_sV_t} \prod_{1 \le s \le t \le k} P_{\tV_s \tV_t}} {\prod_{[i,j]\in \calE(S)} \Bern(p) } \\
%&=  \int \frac{\prod_{s=1}^k P_{V_sV_s} \prod_{t=1}^k P_{\tV_t \tV_t}} {\prod_{s,t=1}^k  \Bern(p)^{\otimes \binom{|V_s \cap \tV_t | }{2} }}  \\
& =  \prod_{s,t=1}^k \left( \frac{q^2}{p} + \frac{(1-q)^2}{1-p} \right)^{ \binom{|V_s \cap \tV_t | }{2} } \\
&= \prod_{s,t=1}^k\left( \frac{1-\frac{3}{2} q }{1-2q} \right)^{ \binom{|V_s \cap \tV_t | }{2} }.
\end{align*}
%& \le \chi^2 \left( \mathbb{E}_{V_1, \ldots, V_k } \left [ \prod_{t=1}^k \prod_{[i,j] \in V_t^2} \Bern(q) \times \prod_{\rm else} \Bern(p)  \biggm \vert S \right] \Big\| \prod_{[i,j] \in S\times S} \Bern(p)  \right) \\
%& = \mathbb{E}_{  V_1, \ldots, V_k; \widetilde{V}_1, \ldots, \widetilde{V}_k } \left[ \prod_{i,j=1}^k \left( \frac{1-\frac{3}{2} q }{1-2q} \right)^{ \binom{|V_i \cap \widetilde{V}_j | }{2} } \biggm \vert S  \right] -1,
%\begin{align*}
%&d_{\rm TV}  \left( \mathbb{E}_{V_1^k } \left [ \prod_{t=1}^k \prod_{[i,j] \in V_t^2} \Bern(q) \times \prod_{\rm else} \Bern(p) \biggm \vert S \right],
%\prod_{[i,j] \in S\times S} \Bern(p)  \right) \\
%& = \frac{1}{2} \prob{E^c} + \frac{1}{2}
%\sqrt{\mathbb{E}_{  V_1^k; \tV_1^k} \left[ \prod_{i,j=1}^k \left( \frac{1-\frac{3}{2} q }{1-2q} \right)^{ \binom{|V_i \cap \tV_j | }{2} } \indc{V_1^k \in E}\indc{\tV_1^k \in E} \biggm \vert S  \right] -1 + 2 \prob{E^c}},
%\end{align*}
Let $X \sim \text{Bin}(1.5K, \frac{1}{k^2})$ and $Y \sim \text{Bin}( 3 \ell, e/k)$. It follows that
\begin{align}
& \mathbb{E}_{  V_1^k; \tV_1^k} \left[ \prod_{s,t=1}^k \left( \frac{1-\frac{3}{2} q }{1-2q} \right)^{ \binom{|V_s \cap \tV_t | }{2} } \prod_{s,t=1}^k \indc{|V_s| \leq 2\ell, |\tV_t| \leq 2\ell} \biggm \vert S \right] \nonumber \\
& \overset{(a)}{\leq}   \mathbb{E}_{  V_1^k; \tV_1^k} \left[ \prod_{s,t=1}^k e^{ q \binom{|V_s \cap \tV_t | \wedge 2\ell}{2} } \biggm \vert S \right] \nonumber \\
& \stackrel{(b) }{\leq}  ~ \prod_{s,t=1}^k \expect{ e^{ q \binom{|V_s \cap \tV_t | \wedge 2\ell }{2} }  \biggm \vert S }	\nonumber \\
 & \overset{(c)}{ \le} \left( \mathbb{E} \left[ e^{q\binom{X \wedge 2\ell}{2}} \right ]  \right)^{k^2} \overset{(d)} {\le}    \mathbb{E} \left[ e^{q\binom{Y}{2}} \right]^{k^2} \overset{(e)}{\le}  \exp(72 e^2 q\ell^2), \label{eq:boundchisquare}
\end{align}
where $(a)$ follows from $1+x \le e^x$ for all $x\ge 0$ and $q<1/4$;
$(b)$ follows from the negative association property of $\{|V_s \cap \tV_t |: s,t\in[k]\}$ proved in
 \prettyref{lmm:NA} and \prettyref{eq:NAbound}, in view of the monotonicity of $x \mapsto e^{q\binom{x \wedge 2\ell}{2}}$ on $\reals_+$; $(c)$ follows because $|V_s \cap \tV_t |$ is stochastically dominated by $\Binom(1.5K, 1/k^2)$ for all $(s, t) \in [k]^2$; $(d)$ follows from Lemma~\ref{LemmaComparison};
$(e)$ follows from Lemma~\ref{lmm:decoup} with $\lambda=q/2$ and $q \ell \le 1/8$. Therefore, by \prettyref{eq:mixturebound}
\begin{align}
d_{\rm TV}  \left(  \tP_{SS},\prod_{[i,j] \in \calE(S) } \Bern(p)  \right) & \le0.5 k e^{-\frac{\ell}{18}} + 0.5 \sqrt{ e^{72e^2 q \ell^2} -1 + 2  k e^{-\frac{\ell}{18}}} \nonumber\\
& \le 0.5 k e^{-\frac{\ell}{18}} + 0.5 \sqrt{e^{72e^2 q \ell^2} -1} + \sqrt{0.5k} e^{-\frac{\ell}{36}}. \label{eq:TotalVariationDiagonal}
\end{align}
The proposition follows by combining \prettyref{eq:TotalvariationDistance}, \prettyref{eq:TotalVarTriangle1}, \prettyref{eq:TotalVarTriangle2}, \prettyref{eq:TotalVariationNondiagonal} and \prettyref{eq:TotalVariationDiagonal}.
\end{proof}

\subsection{Proof of \prettyref{prop:test}}
	\label{sec:pf-test}
	  \begin{proof}
  By assumption the test $\phi$ satisfies
  \begin{align*}
  \Prob_0 \{  \phi(G') =1 \} + \Prob_1 \{ \phi(G')=0 \} = \eta,
  \end{align*}
  where $G'$ is the graph in $\PDS(N,K,2q,q)$ distributed according to either $\Prob_0$ or $\Prob_1$.
    Let $G$ denote the graph in the $\PC(n,k,\gamma)$ and $\tG$ denote the corresponding output of the randomized reduction scheme.
  %  Recall that $h$ is the random mapping from $A$ to $\tA$.
  \prettyref{prop:reduction} implies that $\tG \sim \mathcal{G}(N,q)$ under $H_0^{\rm C}$. Therefore $\Prob_{H_0^{\rm C}} \{  \phi( \tG ) =1\} = \Prob_0 \{  \phi(G') =1 \}$.
  Moreover,
  \begin{align*}
  | \Prob_{H_1^{\rm C}} \{ \phi( \tG ) =0 \} - \Prob_{1} \{ \phi(G')=0  \} | \le d_{\rm TV} (P_{\tG|H_1^C}, \Prob_1) \le \xi.
  \end{align*}
%    where $\widetilde{\Prob}_1$ is the distribution of $\tA$ under $\Prob_{H_1^{\rm C}}$.
It follows that
  \begin{align*}
  \Prob_{H_0^{\rm C}} \{  \phi( \tG ) =1\} + \Prob_{H_1^{\rm C} } \{\phi(\tG ) =0 \} &
%  \le  \Prob_0 \{ \phi (A')=1 \} + \Prob'_1 \{ \phi(A')=0 \}   + \xi
  \le \eta+ \xi.
  \end{align*}
%  \nb{what the hell is $A'$...}
  \end{proof}

\subsection{Proof of \prettyref{thm:main}}
\label{sec:pf-main}

\begin{proof}
Fix $ \alpha>0$ and $0<\beta<1$ that satisfy \prettyref{eq:HardRegimeExpression}. Then it is straightforward to verify that
    \begin{align}
    \alpha<\beta< \min \left \{  \frac{2+m_0\delta }{4+2\delta} \alpha, \; \frac{1}{2}-\delta + \frac{1+2\delta}{4+2\delta} \alpha \right\}
    \label{eq:betaub}
    \end{align}
holds for some $\delta>0$.
%The supremum in \prettyref{eq:HardRegimeExpression} is attained when $\frac{2+m_0\delta }{4+2\delta} \alpha=\frac{1}{2}-\delta + \frac{1+2\delta}{4+2\delta} \alpha$, i.e.,
%\begin{align*}
%\delta= \frac{\sqrt{[ (m_0-2)\alpha+3]^2 + 8(2-\alpha) }- (m_0-2)\alpha-3 }{4}.
%\end{align*}
%Then it follows from the assumption \prettyref{eq:HardRegimeExpression} that
%\begin{align}
%\beta<  \frac{2+m_0\delta }{4+2\delta} \alpha =\frac{1}{2}-\delta + \frac{1+2\delta}{4+2\delta} \alpha. \label{eq:betaub}
%\end{align}
Let $\ell \in \naturals$ and $q_\ell=\ell^{-(2+\delta)}$.
Define
    \begin{align}
     n_\ell= \lfloor \ell^{\frac{2+\delta}{\alpha}-1 } \rfloor, \; k_\ell=\lfloor \ell^{\frac{(2+\delta)\beta}{\alpha}-1 } \rfloor,\; N_\ell=n_\ell\ell, \; K_\ell=k_\ell\ell. \label{eq:DefNK}
    \end{align}
Then
\begin{align}
\lim_{\ell \to \infty} \frac{\log \frac{1}{q_\ell} }{ \log N_\ell} = \frac{(2+\delta)}{(2+\delta)/\alpha-1 +1 } =\alpha, \;
\lim_{\ell \to \infty}  \frac{\log K_\ell}{ \log N_\ell}  = \frac{(2+\delta)\beta/\alpha-1 +1 }{(2+\delta)/\alpha-1 +1} = \beta. \label{eq:NKq}
\end{align}
Suppose that for the sake of contradiction there exists a small $\epsilon>0$ and a sequence of randomized polynomial-time tests $\{\phi_\ell\}$ for $\PDS(N_\ell,K_\ell,2q_\ell,q_\ell)$, such that
\begin{align*}
\Prob_0 \{ \phi_{N_\ell,K_\ell}(G') =1 \} + \Prob_1 \{ \phi_{N_\ell,K_\ell}(G')=0 \} \le 1-\epsilon
\end{align*}
holds for arbitrarily large $\ell$, where $G'$ is the graph in the $\PDS(N_\ell,K_\ell,2q_\ell,q_\ell)$.
Since $\beta>\alpha$, we have $k_{\ell} \ge \ell^{1+\delta}$.
Therefore, $16 q_{\ell} \ell^2 \le 1$ and $k_{\ell} \ge 6e\ell$ for all sufficiently large $\ell$. Applying \prettyref{prop:test}, we conclude that $G \mapsto \phi(\tG)$ is a randomized polynomial-time test for $\PC(n_\ell,k_\ell, \gamma)$ whose Type-I+II error probability satisfies
\begin{align}
\Prob_{H_0^{\rm C}} \{  \phi_{\ell}( \tG ) =1\} + \Prob_{H_1^{\rm C} } \{\phi_\ell(\tG ) =0 \} \le 1- \epsilon+ \xi,
% \le \frac{1}{2}+o(1) < \frac{2}{3}
\label{eq:PlantedCliqueContradiction}
\end{align}
where $\xi$ is given by the right-hand side of \prettyref{eq:defxi}. By the definition of $q_\ell$, we have $q_\ell \ell^2 =\ell^{-\delta}$ and thus
\begin{align*}
k_\ell^2 (q_\ell\ell^2)^{m_0+1} \le \ell^{2 \left( (2+\delta)\beta/\alpha -1 \right) - (m_0+1) \delta}  \le \ell^{-\delta},
\end{align*}
where the last inequality follows from \prettyref{eq:betaub}. Therefore $\xi\to0$ as $\ell\diverge$. Moreover, by the definition in \prettyref{eq:DefNK},
\begin{align*}
% \frac{\beta}{2} \le
  \lim_{\ell \to \infty} \frac{\log k_\ell}{ \log n_\ell} = \frac{(2+\delta)\beta/\alpha-1 }{(2+\delta)/\alpha-1}\le 1- \delta,
\end{align*}
where the above inequality follows from \prettyref{eq:betaub}. Therefore, \prettyref{eq:PlantedCliqueContradiction} contradicts our assumption that \prettyref{hyp:HypothesisPlantedClique} holds for $\gamma$.
Finally, if \prettyref{hyp:HypothesisPlantedClique} holds for any $\gamma>0$, \prettyref{eq:computationallimit} follows from \prettyref{eq:HardRegimeExpression} by sending $\gamma \downarrow 0$.
\end{proof}

\section{Computational Lower Bounds for Approximately Recovering a Planted Dense Subgraph with Deterministic Size} \label{sec:PDSRecovery}
Let $\widetilde{\calG}(N, K, p, q)$ denote the planted dense subgraph model with $N$ vertices and a deterministic dense subgraph size $K$:
(1) A random set $S$ of size $K$ is uniformly chosen from $[N]$; (2) for any two vertices, they are connected with probability $p$
if both of them are in $S$ and with probability $q$ otherwise, where $p>q$. Let \PDSR($n,K,p,q,\epsilon$) denote the planted dense subgraph recovery problem,
where given a graph generated from $\widetilde{\calG}(N, K, p, q)$ and an $\epsilon<1$, the task is to output a set $\widehat{S}$ of size $K$
such that $\widehat{S}$ is a $(1-\epsilon)$-approximation of $S$, \ie, $| \widehat{S} \cap S | \ge (1-\epsilon) K$.
The following theorem implies that \PDSR($N,K,p=cq,q,\epsilon$) is at least as hard as \PDS$(N,K,p=cq,q)$ if $K q =\Omega(\log N)$.
Notice that in \PDSR($N,K,p,q,\epsilon$), the  planted dense subgraph has a deterministic size $K$, while in \PDS$(N,K,p,q)$, the size of the planted dense subgraph is binomially distributed with mean $K$.
\begin{theorem} \label{thm:recoversinglecluster}
For any constant $\epsilon<1$ and $c>0$, suppose there is an algorithm $\calA_N$ with running time $T_N$ that solves
the \PDSR($N,K,cq,q,\epsilon$) problem with probability $1-\eta_N$.
Then there exists a test $\phi_N$ with running time at most $N^2+NT_N+NK^2$ that solves the \PDS$(N,2K,cq,q)$ problem with Type-I+II error probabilities at most $\eta_N+ e^{-C K} + 2N e^{-C K^2 q + K \log N}$, where the constant $C>0$ only depends on $\epsilon$ and $c$.
\end{theorem}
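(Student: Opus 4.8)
The plan is to realize $\phi_N$ as the natural ``detection from recovery'' reduction. On input $G$, an instance of $\PDS(N,2K,cq,q)$, the test runs the recovery oracle $\calA_N$ on $G$ together with a small number (at most of order $N$) of auxiliary graphs obtained from $G$ by a cheap $S$-oblivious randomized modification whose only role is to bring the (random, $\Binom(N,2K/N)$-distributed) size of the planted set down to the deterministic value $K$ that $\calA_N$ is built for; it records the returned vertex sets $\widehat S^{(0)},\widehat S^{(1)},\dots$, each of cardinality $K$, computes the induced edge counts $e(\widehat S^{(j)})=\sum_{i<i'\in\widehat S^{(j)}}A_{ii'}$, and outputs $H_1$ if and only if $\max_j e(\widehat S^{(j)})>\tau$. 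Here $\tau$ is chosen strictly between the typical density $\binom K2 q$ of an arbitrary $K$-set and the conditional mean $\binom K2 q+\binom{(1-\epsilon)K}{2}(c-1)q$ of the edge count of a $(1-\epsilon)$-recovered set, e.g.\ halfway between the two, so that $\tau-\binom K2 q=\Theta_{c,\epsilon}(K^2q)$. Reading $A$ costs $O(N^2)$, the invocations of $\calA_N$ cost $O(NT_N)$, and the $O(N)$ edge counts over $K$-subsets cost $O(NK^2)$, which accounts for the claimed running time.

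The false-alarm (Type-I) analysis is routine. Under $H_0$ we have $G\sim\calG(N,q)$, so regardless of how each $\widehat S^{(j)}$ was produced it is \emph{some} $K$-subset, and hence $\max_j e(\widehat S^{(j)})\le T_{\rm scan}$, the scan statistic of \prettyref{eq:tests}. Applying a Bernstein bound to $\Binom(\binom K2,q)$ exactly as in the proof of \prettyref{prop:upperbound}, together with the union bound over the $O(N)$ candidates (each of which can a priori be any of the $\binom NK\le e^{K\log N}$ subsets), shows $T_{\rm scan}\le\tau$ with probability at least $1-2N\,e^{K\log N-CK^2q}$; on this event $\phi_N$ correctly declares $H_0$. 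This yields the $2Ne^{-CK^2q+K\log N}$ term, and also explains why the bound is only meaningful when $Kq=\Omega(\log N)$.

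For the miss (Type-II) analysis, let $G\sim\calG(N,2K,cq,q)$ with planted set $S$, so $|S|\sim\Binom(N,2K/N)$; by the Chernoff bound, $|S|\ge K$ and $|S|=O(K)$ off an event of probability $e^{-\Omega(K)}$, and we condition on this good event. The key step is to show that, on this event, one of the graphs fed to $\calA_N$ has law within total variation $e^{-\Omega(K)}$ of $\widetilde\calG(N,K,cq,q)$; granting this, the oracle guarantee produces a set $\widehat S$ with $|\widehat S\cap S|\ge(1-\epsilon)K$ except with probability $\eta_N$. Since all pairs inside $S$ carry independent $\Bern(p)$ edges, a Chernoff bound together with a union bound over the at most $2^{O(K)}$ subsets of $S$ gives $e(\widehat S)\ge e(\widehat S\cap S)>\tau$ off a further event of probability $e^{-\Omega(K)}$ (only $2^{O(K)}$, not $N$, subsets appear here, so no extra $\log N$ is incurred). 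Hence the miss probability is at most $\eta_N+e^{-CK}$, and adding the Type-I bound gives the stated total $\eta_N+e^{-CK}+2Ne^{-CK^2q+K\log N}$.

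The step I expect to be the real obstacle is precisely the total-variation matching invoked in the Type-II analysis. The alternative law of $\PDS(N,2K,cq,q)$ is \emph{not} $\widetilde\calG(N,K,cq,q)$: the planted set is roughly twice too large, and, more seriously, its size is binomial rather than deterministic, so $\calA_N$'s guarantee does not apply to $G$ verbatim, and --- since $S$ is unknown --- one cannot simply excise a size-$K$ core of $G$. One must instead exhibit a genuinely $S$-oblivious randomized map (applied to $G$, producing the auxiliary graphs above) whose output, under $H_1$ and after conditioning on $|S|\ge K$, is distributed as the deterministic-size model up to exponentially small total variation, while under $H_0$ it stays structure-free so that the Type-I argument still goes through; verifying that the residual discrepancies from this construction collapse into a single $e^{-CK}$ term is the delicate part. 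By contrast, the choice of $\tau$, the Bernstein and union-bound estimates, and the running-time bookkeeping are all standard.
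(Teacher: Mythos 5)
Your overall architecture agrees with the paper's: produce $O(N)$ auxiliary graphs, feed each to the recovery oracle, compute the induced edge count of each returned $K$-set, threshold the maximum, and charge running time as $N^2 + NT_N + NK^2$. Your Type-I analysis (union bound over $N$ candidates and $\binom{N}{K}\le e^{K\log N}$ sets, then Bernstein) is the paper's, and your Type-II conditioning on $|S|\ge K$ and a Chernoff bound on $|S|\sim\Binom(N,2K/N)$ is also the paper's. Your trick of lower-bounding $e(\widehat S)$ by $e(\widehat S\cap S)$ and union bounding only over $2^{O(K)}$ subsets of $S$ is a modest tightening of the paper's union bound (the paper also pays $\binom{N-K}{K-t}$ for the vertices outside $S$), and is fine.

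The genuine gap is exactly the one you flag: you never construct the $S$-oblivious randomized map that converts a single instance of $\PDS(N,2K,cq,q)$, whose planted set has \emph{random} size, into a family of graphs one of which is distributed as $\widetilde\calG(N,K,p,q)$ with \emph{deterministic} size $K$. You say "one must exhibit such a map" and describe the desired properties, but the proposal does not contain it, so the Type-II argument has no foundation. The paper's construction is short and worth recording: draw a uniformly random permutation $\pi$ of $[N]$, set $G_0=G$, and for $t=1,\dots,N$ form $G_t$ from $G_{t-1}$ by deleting vertex $\pi(t)$ and inserting a fresh vertex that connects to every other vertex independently with probability $q$. Each step removes at most one vertex of $S$ from the surviving planted set, so on the event $|S|=K'\ge K$ there is a (random, $S$-dependent, hence unknown to the algorithm, which is why you take a max over all $N$ indices) step $I$ at which exactly $K$ planted vertices remain; conditional on $|S|=K'$ and $I=i$, the graph $G_i$ is distributed \emph{exactly} as $\widetilde\calG(N,K,p,q)$, not merely within total variation $e^{-\Omega(K)}$. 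Under $H_0$ every $G_t\sim\calG(N,q)$, so the Type-I argument is unaffected. Your proposal also suggests the matching should be "up to TV $e^{-\Omega(K)}$" and that one must "verify the residual discrepancies collapse into a single $e^{-CK}$ term"; with the sequential-replacement construction there is no residual discrepancy at the matching index, and the only $e^{-CK}$ term comes from the Chernoff bound on $|S|$. Without this (or some equivalent) construction, the proof does not close.
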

\begin{proof}
%Given $G$ generated either under $\mathcal{G}(N,q)$ or $\mathcal{G}(N,2K,p=cq, q)$, we obtain a sequence of $N$ graphs $G_1, \ldots, G_N$
%by each time picking  a vertex  (without replacement) in any arbitrary order and replacing it with a new vertex that connects to
%all other vertices independently at random with probability $q$.
Given a graph $G$, we construct a sequence of graphs $G_1, \ldots, G_N$ sequentially as follows: Choose a permutation $\pi$ on the $N$ vertices uniformly at random. Let $G_0=G$. For each $t \in [N]$, replace the vertex $\pi(t)$ in $G_{t-1}$ with a new vertex that connects to all other vertices independently at random with probability $q$.
We run the given algorithm $\calA_N$ on $G_1, \ldots, G_N$ and let $S_1, \ldots, S_N$
denote the outputs which are sets of $K$ vertices. Let $E(S_i,S_i)$ denote the total number of edges in $S_i$ and $\tau= q+ (1-\epsilon)^2(p-q)/2$.
 Define a test $\phi: G \to \{0,1\}$ such that $\phi(G)=1$ if and only if $\max_{i \in [N]} E(S_i,S_i)> \tau \binom{K}{2}$.
% ; otherwise $\phi(G)=0$.
 The construction of each $G_i$ takes $N$ time units; the running time of $\calA$ on $G_i$ is at most $T_N$ time units; the computation of $E(S_i,S_i)$ takes at most $K^2$ time units.
 Therefore, the total running time of $\phi$ is at most $N^2+N T_N+NK^2$.

 Next we upper bound the Type-I and II error probabilities of $\phi$. Let $C=C(\epsilon,c)$ denote a positive constant
 whose value may depend on the context.
% may change line by line.
If $G \sim  \mathcal{G}(N,q)$, then all $G_i$ are distributed according to $\mathcal{G}(N,q)$. By the union bound and the Bernstein inequality,
\begin{align*}
\Prob_0 \{ \phi(G)=1\} &\le \sum_{i=1}^N \Prob_0 \left\{ E(S_i,S_i) \ge \tau \binom{K}{2} \right\} \\
& \le  \sum_{i=1}^N \sum_{S': S' \subset [N], |S'| =K} \Prob_0 \left \{ E(S', S' ) \ge \tau \binom{K}{2} \right\} \\
 & \le N \binom{N}{K} \exp \left(  - \frac{\binom{K}{2}^2 (1-\epsilon)^4 (p-q)^2 /4}{ 2 \binom{K}{2}q + \binom{K}{2} (1-\epsilon)^2 (p-q)/3 } \right) \\
 & \le N \exp (  -C K^2 q + K \log N).
\end{align*}
If $G \sim \mathcal{G}(N,2K,p,q)$, let $S$ denote the set of vertices in the planted dense subgraph.
Then $|S| \sim \Binom(N, \frac{2K}{N})$ and by the Chernoff bound, $\Prob_1[|S| < K] \le \exp(-C K)$. If $|S| =K' \ge K$, then there must exist some $I \in [N]$ such that $G_I$ is distributed exactly as $ \widetilde{ \calG}(N,K,p, q)$. Let $S^\ast$ denote the set of vertices in the planted dense subgraph of $G_I$ such that $|S^\ast |= K$. Then
conditional on $I=i$ and the success of $\calA_N$ on $G_i$, $|S_i \cap S^\ast| \ge (1-\epsilon)K$.
Thus by the union bound and the Bernstein inequality, for $K' \ge K$,
\begin{align*}
%&\Prob_1\left\{ E(S_i, S_i) < \tau \binom{K}{2} \bigg| |S|=K', I =i \right \} \\
&\Prob_1\{  \phi(G)=0 | |S| = K', I=i \} \\
%& \le \Prob_1\left\{ E(S_i \cap S^\ast, S_i \cap S^\ast) < \tau \binom{K}{2} \bigg| |S|=K', I =i \right \} \\
& \le \eta_N+ \sum_{S' \subset [N]: |S'| =K, |S' \cap S^\ast| \ge (1-\epsilon)K } \Prob_1\left\{ E(S', S') \le \tau \binom{K}{2} \bigg| |S|=K', I =i \right \} \\
& \le \eta_N + \sum_{t \ge (1-\epsilon)K}^{K} \binom{K}{t} \binom{N-K}{K-t} \exp \left(  - \frac{\binom{K}{2}^2 (1-\epsilon)^4 (p-q)^2 /4}{ 2 \binom{K}{2}p + \binom{K}{2} (1-\epsilon)^2 (p-q)/3 } \right) \\
& \le \eta_N + K \exp (  -C K^2 q+ K \log N).
\end{align*}
It follows that
\begin{align*}
&\Prob_1 \{ \phi(G)=0\} \\
& \le \Prob_1\{|S| < K\} + \sum_{K' \ge K} \sum_{i=1}^N \Prob_1\{ |S| =K', I=i \} \Prob_1\{  \phi(G)=0 | |S| = K', I=i \}  \\
& \le  \exp(-C  K) +   \eta_N + K \exp (  -C K^2 q+ K \log N).
\end{align*}
\end{proof}

\section{A Lemma on Hypergeometric Distributions}

\label{app:H}

\begin{lemma}
There exists a function $\tau : \reals_+ \to \reals_+$ satisfying $\tau(0+) = 1$ such that the following holds:
For any $p \in \naturals$ and $m \in [p]$, let $H \sim \Hyper(p,m,m)$ and $\lambda = b \pth{\frac{1}{m} \log \frac{\eexp p}{m} \wedge \frac{p^2}{m^4}}$ with $ 0<b< 1/(16 \eexp)$. Then
\begin{equation}
\expect{\exp\pth{\lambda H^2}} \leq \tau(b).
	\label{eq:H}
\end{equation}
%for some absolutely constant $B$. % only depends on $a$.
	\label{lmm:H}
\end{lemma}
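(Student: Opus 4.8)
The plan is to distill a few scalar inequalities from the definition of $\lambda$ and then run a bulk/tail decomposition of $\Expect[\exp(\lambda H^2)]$. Write $r=m/p\in(0,1]$ and $\mu=\Expect[H]=m^2/p=mr$, so that $\lambda=b\bigl(\tfrac1m\log\tfrac er\wedge\tfrac1{\mu^2}\bigr)$. Since $r\log\tfrac er\le 1$ on $(0,1]$, in either case of the minimum one checks directly that
\[
\lambda\mu^2\le b,\qquad \lambda\mu\le b,\qquad \lambda m\le b\log\tfrac er .
\]
I will also use the standard tail bounds $\prob{H\ge h}\le\binom mh r^h\le(e\mu/h)^h$ for $h\ge1$ (a union bound over which $h$-subset of the first sample lies in the second) and the Chernoff bound $\prob{H\ge h}\le\exp(-\mu\,\psi(h/\mu))$ with $\psi(x)=x\log x-x+1$.

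The first step is to write $\Expect[\exp(\lambda H^2)]-1=\sum_{h\ge1}(e^{\lambda h^2}-1)\prob{H=h}$ and split at $\lambda h^2=1$: using $e^t-1\le(e-1)t$ on $[0,1]$, $\var(H)\le\mu$, and $\lambda\mu^2\le b$, $\lambda\mu\le b$,
\[
\Expect[\exp(\lambda H^2)]-1\ \le\ (e-1)\lambda\Expect[H^2]+\!\!\!\sum_{h>1/\sqrt\lambda}\!\!\! e^{\lambda h^2}\prob{H=h}\ \le\ 2(e-1)b+\Sigma,
\]
where $\Sigma$ denotes the tail sum. If $\lambda m^2\le 1$ then $\Sigma=0$ (no integer $h$ satisfies $1/\sqrt\lambda<h\le m$), and we are done with $\tau(b)=1+2(e-1)b$. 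The remaining work is to bound $\Sigma$ when $\lambda m^2>1$, which forces $m>1/b$ and $r<\sqrt b$.

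To control $\Sigma$ I would distinguish $\mu\le1$ and $\mu>1$. When $\mu\le1$ one has $\lambda=\tfrac bm\log\tfrac er$; bounding $\prob{H=h}\le(e\mu/h)^h$ and $e^{\lambda h^2}\le e^{\lambda mh}=\bigl((e/r)^b\bigr)^{h}$ makes the summand $(\kappa/h)^h$ with $\kappa=e^{1+b}mr^{1-b}$, and using $m\le1/r$ one finds $\kappa\sqrt\lambda\le e^{1+b}r^{1/2-b}\sqrt{b\log\tfrac er}=O(\sqrt b)$ uniformly in $r$, so $\Sigma\le\sum_{h>1/\sqrt\lambda}(\kappa\sqrt\lambda)^h\le 2\kappa\sqrt\lambda=O(\sqrt b)$. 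When $\mu>1$ one has $1/\sqrt\lambda\ge\mu/\sqrt b$, so every surviving term has $x:=h/\mu>1/\sqrt b$; bounding $\prob{H=h}\le e^{-\mu\psi(x)}$ and $e^{\lambda h^2}=e^{\lambda\mu^2x^2}\le e^{bx^2}$ gives summand $\le\exp(bx^2-\mu\psi(x))$, and one verifies (using $2b<mr^2\log\tfrac1r$, which follows from $mr^2\log\tfrac er\ge1$ in this subcase) that $x\mapsto bx^2-\mu\psi(x)$ is strictly decreasing on $(1/\sqrt b,1/r)$ with successive ratio $\le\sqrt b$, so $\Sigma$ is at most twice its first term $\exp(1-\mu\psi(1/\sqrt b))=o(1)$. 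Either way $\Sigma=O(\sqrt b)$, and the lemma follows with, say, $\tau(b)=1+C\sqrt b$ for an absolute constant $C$ and all $b<1/(16e)$.

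The main obstacle is the uniformity over all $1\le m\le p$: the two terms of $\lambda$ are calibrated precisely so that $p^2/m^4\sim\mu^{-2}$ tames the bulk of $H$ while $\tfrac1m\log\tfrac{ep}m$ tames the contribution of the extreme values of $H$ right up to $H=m$, and the delicate point is the crossover region $h\sim 1/\sqrt\lambda$, where the Gaussian-type growth $e^{\lambda h^2}$ must be beaten by the tail decay. A naive ``number of surviving terms $\times$ largest term'' estimate is not good enough there — it produces a $\tau$ that does not tend to $1$ — so one genuinely has to exhibit the geometric decay of the summands, which is exactly where both constraints on $\lambda$ are used simultaneously and where the numerical condition $b<1/(16e)$ (ensuring the Taylor estimates above are valid) enters.
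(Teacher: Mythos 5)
Your bulk/tail decomposition of $\expect{\exp(\lambda H^2)}$ is a genuinely different route from the paper's proof, which first dominates $H$ stochastically by $S\sim\Binom(m,\tfrac{m}{p-m})$, then uses Cauchy--Schwarz together with a decoupling inequality to bound $\expect{\exp(\lambda S^2)}^2$ by $\expect{\exp(2\lambda S)}\cdot\expect{\exp(8\lambda S T)}$ with $T$ an independent copy, and finally truncates on the value of $T$. That route never has to race the Gaussian-type growth $e^{\lambda h^2}$ against the exact hypergeometric tail, which is precisely where your proposal runs into trouble.

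Your preliminary inequalities $\lambda\mu^2\le b$, $\lambda\mu\le b$, $\lambda m\le b\log\tfrac{\eexp}{r}$ are correct, and the bulk estimate and the $\mu\le1$ tail estimate look sound. However, the $\mu>1$ branch has a real gap. The parenthetical claim that $mr^2\log\tfrac{\eexp}{r}\ge1$ ``in this subcase'' does not hold: the hypotheses of that branch are $\mu=mr>1$ and $\lambda m^2>1$, and these are compatible with $mr^2\log\tfrac{\eexp}{r}\ll1$. Take, say, $b=1/50<1/(16\eexp)$, $p=10^{12}$, $m=10^7$: then $r=10^{-5}$, $\mu=100>1$, $\log\tfrac{\eexp}{r}\approx12.5$, and $\lambda=b\min\{1.25\times10^{-6},\,10^{-4}\}=1.25\times10^{-6}\,b$, so $\lambda m^2\approx2.5\times10^{6}\gg1$, yet $mr^2\log\tfrac{\eexp}{r}\approx1.25\times10^{-2}\ll1$. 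In this configuration $\lambda\mu^2\approx1.25\times10^{-2}\,b$, roughly $80$ times smaller than $b$, so the step $e^{\lambda h^2}=e^{\lambda\mu^2x^2}\le e^{bx^2}$ inflates the exponent by that factor and the claimed monotonicity of $x\mapsto bx^2-\mu\psi(x)$ on $(1/\sqrt b,1/r)$ fails outright: at the right endpoint $x=1/r=10^5$ one has $bx^2=b/r^2=2\times10^8$ while $\mu\psi(1/r)=m\log\tfrac1r-m+\mu\approx1.05\times10^8$, so $bx^2-\mu\psi(x)$ is of order $+10^8$, not negative. The true summand $e^{\lambda m^2}\Prob\{H=m\}$ is of course astronomically small ($\lambda m^2\approx2.5\times10^6$ versus $-\log\Prob\{H=m\}\ge m\log\tfrac pm\approx1.15\times10^8$), but your chain of inequalities does not see this; the loss is entirely in $e^{\lambda h^2}\le e^{bx^2}$. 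Repairing it would require carrying $\lambda\mu^2$ exactly and sub-splitting the $\mu>1$ branch according to which term achieves the minimum in the definition of $\lambda$, which is exactly the kind of case bookkeeping the decoupling argument in the paper is designed to avoid.
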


	\begin{proof}
Notice that if $p \le 64$, then the lemma trivially holds. Hence, assume $p \ge 64$ in the rest of the proof.
We consider three separate cases depending on the value of $m$. We first deal with the case of $m \geq \frac{p}{4}$. Then $\lambda = \frac{b p^2}{m^4} \leq \frac{256b}{p^2}$. Since $H \leq p$ with probability $1$, we have $\expect{\exp\pth{\lambda H^2}} \leq \exp(256b)$.

Next assume that $m \leq \log \frac{\eexp p}{m}$. Then $ m \le \log p$ and $\lambda=  \frac{b}{m} \log \frac{\eexp p}{m} $.
Let $(\ntok{s_1}{s_m}) \, \iiddistr \, \Bern(\frac{m}{p-m})$. Then $S = \sum_{i=1}^m  s_i \sim \text{Bin}(m,\frac{m}{p-m})$ which dominates $H$ stochastically.  It follows that
%	Following the same steps in \prettyref{eq:hbin} -- \prettyref{eq:case2}, we have
	\begin{align}
	\expect{\exp\pth{\lambda H^2}}  & \leq \expect{\exp\pth{\lambda m S}} \nonumber \\
& = \left[ 1+ \frac{m}{p-m}\left( \eexp^{\lambda m} -1 \right) \right]^m  \nonumber \\
 & \overset{(a)} {\leq} \exp \left( \frac{2m^2}{p} \left( \left( \frac{\eexp p}{m} \right)^b -1  \right) \right) \nonumber \\
 & \overset{(b)}{\leq} \exp \left( \frac{2 (\log p)^2 }{p} \left( \left( \frac{\eexp p}{\log p} \right)^b -1  \right)   \right) \nonumber \\
 & \overset{(c)} {\leq} \max_{1 \le p \le 512} \left \{ \exp \left( \frac{2 (\log p)^2 }{p} \left( \left( \frac{\eexp p}{\log p} \right)^b -1  \right)   \right) \right\} : =\tau ( b),
	\label{eq:mlogp}
\end{align}
where $(a)$ follows because $1+x \le \exp(x)$ for all $x \in \reals$ and $m \le p/2$; $(b)$ follows because $m \le \log p$
and $f(x)=\frac{2x^2}{p} \left( \left(\frac{\eexp p}{x} \right)^b-1 \right)$ in non-decreasing in $x$; $(c)$ follows because $g(x)= \frac{2 (\log x)^2 }{x} \left[ \left( \frac{\eexp x}{\log x} \right)^b -1  \right]$ is non-increasing when $x \ge 512 $; $\tau(0+)=1$ by definition.
	
In the rest of the proof we shall focus on the intermediate regime: $\log \frac{\eexp p}{m} \leq m \leq \frac{p}{4}$.	%Choose
%	\begin{equation}
%	b \leq \frac{1}{32 (\eexp-1)^2} \wedge \frac{1}{160 \eexp}.
%	\label{eq:b-value}
%\end{equation}
%	$\Hyper(p,m,m)$ is stochastically dominated by the binomial distribution $\text{Bin}(m,\frac{m}{p-m})$,
Since $S $ dominates $H$ stochastically,
\begin{equation}
	\expect{\exp\pth{\lambda H^2}} \leq \expect{\exp\pth{\lambda S^2}}.
	\label{eq:MS}
\end{equation}
Let $(\ntok{t_1}{t_m}) \, \iiddistr \, \Bern(\frac{m}{p-m})$ and $T = \sum_{i=1}^m  t_i$, which is an independent copy of $S$. Next we use a decoupling argument to replace $S^2$ by $ST$:
\begin{align}
\pth{\expect{\exp\pth{\lambda S^2}}}^2
= & ~ \left( \Expect\bqth{\exp\bpth{\lambda \sum_{i=1}^m s_i^2 + \lambda \sum_{i \neq j} s_i s_j}}	\right)^2 \nonumber \\
\leq & ~ \expect{\exp\pth{2 \lambda S}} \Expect\bqth{\exp\bpth{2 \lambda \sum_{i \neq j} s_i s_j}}  \label{eq:CS} \\
\leq & ~ \expect{\exp\pth{2 \lambda S}} \Expect\bqth{\exp\bpth{8 \lambda \sum_{i \neq j} s_i t_j}}, \label{eq:decouple2}\\
\leq & ~ \expect{\exp\pth{2 \lambda S}} \expect{\exp\pth{8 \lambda S T}}, \label{eq:Ssq}
\end{align}
	where \prettyref{eq:CS} is by Cauchy-Schwartz inequality and \prettyref{eq:decouple2} is a standard decoupling inequality (see, \eg, \cite[Theorem 1]{Vershynin11}).

	The first expectation on the right-hand side \prettyref{eq:Ssq}	can be easily upper bounded as follows: Since $m \geq \log \frac{\eexp p}{m}$, we have $\lambda \leq b$.
%	Similar to \prettyref{eq:mlogp} and
	Using the convexity of the exponential function:
	\begin{equation}
	\exp(ax)-1 \leq (\eexp^a-1)x, \quad x\in[0,1],
	\label{eq:expcvx}
    \end{equation}	
we have
	\begin{align}
	\expect{\exp\pth{2 \lambda S}}
\leq & ~ \exp\pth{\frac{m^2}{p-m} \pth{\eexp^{2\lambda}-1}} \leq \exp\pth{\frac{ 4 \left(  \eexp^{2b} -1 \right) m^2 \lambda }{bp} }	\nonumber \\
\leq & ~ 	\exp\pth{4 \left( \eexp^{2b} -1 \right) \frac{ m \log \frac{\eexp p}{m} }{p} } \leq \exp\pth{ 4 \left( \eexp^{2b} -1 \right)}, 	 \label{eq:expS}
\end{align}
where the last inequality follows from $\max_{0 \leq x \leq 1}x \log \frac{\eexp}{x} = 1$.
	
	Next we prove that for some function $\tau' : \reals_+ \to \reals_+$ satisfying $\tau'(0+) = 1$,
	\begin{equation}
	\expect{\exp\pth{8 \lambda S T}} \le \tau'(b),
	\label{eq:expST}
\end{equation}
which, in view of \prettyref{eq:MS}, \prettyref{eq:Ssq} and \prettyref{eq:expS}, completes the proof of the lemma. We proceed toward this end by truncating on the value of $T$. First note that
\begin{equation}
\expect{\exp\pth{8 \lambda S T} \indc{T > \frac{1}{8\lambda}}} \leq \expect{\exp\pth{8 b  T \log \frac{\eexp p}{m}}  \indc{T > \frac{1}{8\lambda}}}
	\label{eq:expST.trunc2}
\end{equation}
where the last inequality follows from $S \le m$ and $\lambda m \le b \log \frac{\eexp p}{m}$. It follows from the definition that
\begin{align}
&\expect{\exp\pth{8 b   T \log \frac{\eexp p}{m}} \indc{T > \frac{1}{8\lambda}}}  \nonumber \\
& \le \sum_{ t \ge 1/ (8 \lambda) } \exp \left( 8b t \log \frac{\eexp p}{m} \right) \binom{m}{t} \left( \frac{m}{p-m} \right)^t \nonumber \\
& \overset{(a)}{\le} \sum_{ t \ge 1/ (8 \lambda) } \exp \left( 8b t\log \frac{\eexp p}{m}  + t \log \frac{\eexp m}{t} -t \log \frac{p}{2m}  \right) \nonumber\\
& \overset{(b)}{\le} \sum_{ t \ge 1/ (8 \lambda) } \exp \left( 8bt \log \frac{\eexp p}{m}  + t \log \left( 8 \eexp b \log \frac{\eexp p}{m} \right) -t \log \frac{p}{2m}  \right) \nonumber \\
& \overset{(c)}{\le} \sum_{ t \ge 1/ (8 \lambda) } \exp \left[ - t \left( \log 2 - 8 b \log (4\eexp)  - \log \left( 8 \eexp b \log (4 \eexp) \right) \right) \right] \nonumber \\
& \overset{(d)}{\le} \sum_{ t \ge 1/ (8 b) } \exp \left[ - t \left( \log 2 - 8 b \log (4\eexp)  - \log \left( 8 \eexp b \log (4 \eexp) \right) \right)
\right] := \tau''(b)
\label{eq:expST-cond}
\end{align}
where $(a)$ follows because $\binom{m}{t} \le \left( \frac{em}{t} \right)^t$ and $m \le p/2$; $(b)$ follows because $ \frac{m}{t} \le 8  m\lambda \le 8 b \log \frac{\eexp p}{m} $; $(c)$ follows because $ m \le p/4$ and $ b \le 1/(16 \eexp)$; $(d)$ follows because $\lambda \le b$; $\tau''(0+)=0$ holds because $\log 2< 8 b \log (4\eexp)  + \log \left( 8 \eexp b \log (4 \eexp) \right)$ for $b \le 1/(16\eexp)$.

Recall that $m \geq \log \frac{\eexp p}{m}$. Then $\lambda = b \pth{\frac{1}{m} \log \frac{\eexp p}{m}  \wedge \frac{p^2}{m^4}} \leq b \pth{1 \wedge \frac{p^2}{m^4}}$. Hence, we have
\begin{equation}
\frac{m^2 \lambda}{p}  \leq b \pth{\frac{m^2}{p} \wedge \frac{p}{m^2}} \leq b
	\label{eq:b-1}
\end{equation}
By
%using the stochastic dominance relationsh \prettyref{eq:hyperbin},
conditioning on $T$ and averaging with respect to $S$, we have
%\begin{equation}
%\expect{\exp\pth{8 \lambda S T} \indc{T \leq \frac{1}{8\lambda}}} \leq \expect{\exp\pth{\frac{2m^2}{p} \pth{\exp(8\lambda T)-1}} \indc{T \leq \frac{1}{8\lambda}}}	.
%	\label{eq:expST-cond}
%\end{equation}
%Define $L = \frac{1}{8\lambda}$. Then
%Next we bound the right-hand side of \prettyref{eq:expST-cond} by truncating the range of $T$:
%Recall that $m \geq \log \frac{\eexp p}{m}$. Then $\lambda = b \pth{\frac{1}{m} \log \frac{\eexp p}{m}  \wedge \frac{p^2}{m^4}} \leq b \pth{1 \wedge \frac{p^2}{m^4}}$. In view of our choice of $b$ in \prettyref{eq:b-value}, we have
%\begin{equation}
%\frac{m^2 \lambda}{p}  \leq b \pth{\frac{m^2}{p} \wedge \frac{p}{m^2}} \leq b \leq \frac{1}{32(\eexp-1)^2}.
%	\label{eq:b-1}
%\end{equation}
%Repeatedly using \prettyref{eq:expcvx}, we have
\begin{align}
\expect{\exp\pth{8 \lambda S T} \indc{T \leq \frac{1}{8\lambda}}}  \leq & ~\expect{\exp\pth{\frac{2m^2}{p} \pth{\exp(8\lambda T)-1}} \indc{T \leq \frac{1}{8\lambda}}} \nonumber \\
\overset{(a)}{\leq} & ~ \expect{\exp\pth{\frac{16\eexp m^2}{p} \lambda T}}  \nonumber \\
\overset{(b)}{\leq} & ~ \exp \sth{\frac{2 m^2}{p} \pth{\exp\pth{\frac{16\eexp m^2 \lambda}{p} } - 1} } \nonumber \\
\overset{(c)}{\leq} & ~ \exp \sth{\frac{32 \eexp^2 m^4}{p^2} \lambda}  \overset{(d)} {\leq}  ~ \exp ( 32 \eexp^2 b)  	\label{eq:expST.trunc1},
\end{align}
where $(a)$ follows from $\eexp^x -1 \le e^a x$ for $ x\in [0,a]$; $(b)$ follows because $T \sim \text{Bin}(m,\frac{m}{p-m})$ and $p \geq 2m$; $(c)$ follows due to~\prettyref{eq:b-1} and $16 \eexp b \le 1$; $(d)$ follows because $\lambda \le b \frac{p^2}{m^4}$.
% \prettyref{eq:a2} is due to , and \prettyref{eq:a3} -- \prettyref{eq:expST.trunc1} follow from  \prettyref{eq:b-1}.
 Assembling \prettyref{eq:expST.trunc2}, \prettyref{eq:expST-cond} and \prettyref{eq:expST.trunc1}, we complete the proof of \prettyref{eq:expST}, hence the lemma.
\end{proof}

\end{document}